\documentclass[11pt]{article}
\usepackage{amssymb}
\usepackage{amsfonts}
\usepackage{amsmath}
\usepackage{mathrsfs}
\usepackage{graphicx}
\usepackage{amsbsy}
\usepackage{theorem}
\usepackage{color}
\usepackage{hyperref}
\usepackage{tikz}
\usepackage[normalem]{ulem}
%
 \textheight 23truecm
 \textwidth 16.7truecm
 \footskip 1.3truecm
 \voffset=-2truecm
 \hoffset=-2truecm


\newtheorem{theorem}{Theorem}[section]
\newtheorem{lemma}[theorem]{Lemma}
\newtheorem{corollary}[theorem]{Corollary}
\newtheorem{definition}[theorem]{Definition}
\newtheorem{proposition}[theorem]{Proposition}
\newtheorem{remark}[theorem]{Remark}

\def\thetheorem{\thesection.\arabic{theorem}}
\def\thesection{\arabic{section}}

\def\theequation {\thesection.\arabic{equation}}
%
\def\beq{\begin{equation}\displaystyle}
\def\eeq{\end{equation}}
\def\bel{\begin{equation} \displaystyle \begin{array}{l} }
\def\eel{\end{array} \end{equation} }
\def\bell{\begin{equation} \displaystyle \begin{array}{ll}  }
\def\eell{\end{array} \end{equation} }

\def\bea{\begin{eqnarray}}
\def\eea{\end{eqnarray} }
\def\bean{\begin{eqnarray*}}
\def\eean{\end{eqnarray*} }
\newenvironment{proof}{\noindent{\bf Proof.~}}
{{\mbox{}\hfill {\small \fbox{}}\\}}
\def\qed{\mbox{}\hfill {\small \fbox{}}\\}
\catcode`@=11
\renewcommand\appendix{\bigskip {\noindent \Large \bf Appendix}
  \renewcommand{\thesection}{\Alph{section}}
  \setcounter{section}{0}%
  \setcounter{subsection}{0}%
\setcounter{equation}{0}%
\setcounter{theorem}{0}%
\def\thetheorem{A.\arabic{theorem}}
\def\theequation {A.\arabic{equation}}}
\catcode`@=12

\def\RR{\mathbb{R}}

\def\ds{\displaystyle}

\def\eps{\varepsilon}

\def\calW{{\mathcal W}}






\begin{document}

\title{On a reaction-diffusion system modeling strong competition between two mosquito populations}
\author{Nicolas Vauchelet\thanks{Universit\'e Sorbonne Paris Nord, Laboratoire Analyse, G\'eom\'etrie et Applications, LAGA, CNRS UMR 7539,
    F-93430, Villetaneuse, France (\texttt{vauchelet@math.univ-paris13.fr}) }}

\maketitle


\begin{abstract}
This paper is devoted to the analysis of a reaction-diffusion system with strong competition and spatial heterogeneities modelling the interaction between two species of mosquitoes.
In particular, we propose a mathematical model that accounts for the spatial segregation observed between two species of mosquito vectors of numerous viruses.
Indeed, it has been observed that, in tropical regions, \textit{Aedes aegypti} mosquitoes are well established in urban area whereas \textit{Aedes albopictus} mosquitoes spread widely in forest region. Moreover, these species of mosquitoes compete with each other in the larval stage.
Based on these observations, we introduce a simple mathematical model to account for this phenomenon.
This model consists of a system of reaction-diffusion equations describing the dynamics of the aquatic and aerial phases of each species in a spatially heterogeneous environment. The competition takes place at the aquatic phase and is assumed to be strong which allows us to reduce the dimensionality of the system.
We first establish a sufficient condition on the parameters to prevent one species from invading another in a homogeneous environment. Next, using this sufficient condition, we show that spatial segregation may be observed in a spatially heterogeneous environment. 
Our theoretical results are also illustrated by some numerical simulations.
\end{abstract}
\vskip1cm

\noindent
\textbf{\small Key words:} population dynamics; reaction–diffusion system; stationary solutions.

\noindent{\textbf{\small AMS Subject Classification:}  35B25, 35B27, 35B45, 35B51, 54C70, 82B21.

  \section{Introduction}

  \textit{Aedes} mosquitoes are a major vector of several diseases, among them dengue, yellow fever, chikungunya, Zika. Originally from tropical regions in Asia, they have widely spread and are now well-established in many countries, including temperate regions (see e.g. \cite{ECDC, USA, SA, Africa}).
  Understanding the dynamics of these species of mosquitoes is fundamental in the fight against the diseases they transmit. 
  Among \textit{Aedes} mosquitoes, the most common are \textit{Aedes albopictus} and \textit{Aedes aegypti}. Inter-specific competition exists between \textit{Aedes aegypti} and \textit{Aedes albopictus} above all in the larval stage \cite{Competition1}.
Another interesting observation for these species is that their geographical distribution may be quite different~: \textit{Aedes aegypti} are mainly present in urban area such as city center whereas \textit{Aedes albopictus} are most widespread in suburb area or in forested area, see e.g. \cite{GeoDistrib, Congo}.

The aim of this paper is to study a mathematical model of reaction-diffusion equations that accounts for these observations.
Standard reaction-diffusion equations are often used to describe biological invasions in mathematical biology and ecology, see e.g. \cite{Murray, Cantrell, Shigesada, Zhao, Perthame, Roques}.
Recently two different approaches have been proposed to investigate the competition between \textit{Aedes aegypti} and \textit{Aedes albopictus}. In \cite{Competition}, the authors introduce an advection-reaction-diffusion system with free boundaries to study the invasion of these two species.
In \cite{samia}, a mathematical model, for which segregation between the two species with respect to their habitats occurs, is proposed. More precisely, this model consists in a scalar reaction-diffusion equation in a spatially heterogeneous environment. Explicit conditions on the parameters to guarantee invasion of one species or another are provided. This scalar equation is derived by reducing a competitive system of two reaction-diffusion equations by assuming that competition is large, such a reduction follows the ideas developed in \cite{Dancer1, Dancer2, Girardin}.
However in \cite{samia} the dynamics of the aquatic phase is neglected to simplify the analysis.

In this article, we extend this latter model by incorporating the dynamics of the aquatic phase. Then, we divide each population into an aquatic stage and an adult stage. This, of course, increases the dimensionality of the problem and thus the difficulty of the mathematical analysis.
So, although in \cite{samia} the system is reduced to a scalar reaction-diffusion equation, here the resulting model is a competitive system of reaction-diffusion equations.
Although the existence of traveling front for competitive system is now well-established (see \cite{Gardner}), finding conditions to guarantee invasion of one species or another is still an active area of research, see e.g. the review article \cite{GirardinReview}.
Then, a first objective of this work is to provide an explicit sufficient condition to guarantee invasion of one species in the resulting competitive system.
Moreover, in order to model two different habitats (urban area and forest), we consider a two-patch landscape by assuming that the carrying capacities are different in the two patches, and we investigate the biological invasion in the whole domain.
Mathematical analysis of invasion phenomena in reaction-diffusion equations in spatially heterogeneous domain has been investigated in several works, for instance in periodic environments we refer to the articles \cite{SKT, BHN, Nadin, BerestyckiNadin} and references therein, we refer also to \cite{Maciel, Ovaskainen, Hamel} where patchy models with interface matching conditions are considered.
In the main result of this article, we provide some explicit conditions on the parameters to guarantee invasion of each species in its habitat under some non-smallness condition on the initial data.

The outline of this paper is the following. In Section \ref{sec:mainresult}, the mathematical models are introduced and the main results are summarized. 
Section \ref{sec:full} is devoted to a brief analysis of the equilibria and their stability for the full system and its reduction in the strong competition asymptotic, whose rigorous proof is postponed in an appendix at the end of the paper. Section \ref{sec:homogene} deals with the mathematical analysis of the reduced system in a spatially homogeneous environment. In particular we provide a sufficient condition on the parameters to guarantee invasion of one species.
This result is illustrated with a numerical simulation.
Then, in Section \ref{sec:heterogene} we investigate the spatially heterogeneous case by considering a two-patch environment. We provide some sufficient conditions to have invasion of each species in its habitat. This result is also illustrated with a numerical simulation.
This articles ends with a concluding Section with outlooks and perspectives. Finally, an Appendix gathers some useful technical proofs.

\section{Mathematical modelling and main results}\label{sec:mainresult}

The starting point of this study is the following competitive model in one space dimension, which is inspired by \cite{SBD, 2patch}.
Let $E_1$, resp. $E_2$, denote the density of species $1$, resp. species $2$, at aquatic phase, and let $F_1$, resp. $F_2$, be the density of fertilized female adults of species $1$, resp. species $2$.
We assume that these quantities depends on the time $t>0$ and on the space variable $x\in \RR$.
The model reads
\begin{subequations}\label{syst}
\begin{align}\label{syst:E1}
&\partial_t E_1 = b_1 F_1 \left(1 - \frac{E_1}{K_1}\right) - c E_1 E_2 - (\mu_1+\nu_1) E_1 \\
\label{syst:F1}
&\partial_t F_1 - D_1 \partial_{xx} F_1 = \rho \nu_1 E_1 - \delta_1 F_1  \\
\label{syst:E2}
&\partial_t E_2 = b_2 F_2 \left(1 - \frac{E_2}{K_2}\right) - c E_1 E_2 - (\mu_2+\nu_2) E_2 \\
\label{syst:F2}
&\partial_t F_2 - D_2 \partial_{xx} F_2 = \rho \nu_2 E_2 - \delta_2 F_2.
\end{align}
\end{subequations}
In this system, fertilized females give birth to mosquitoes at aquatic phase with a birth rate denoted $b_1$ for species $1$ and $b_2$ for species $2$. The death rates of mosquitoes at aquatic phases are denoted $\mu_1$ for species $1$ and respectively $\mu_2$ for species $2$. For adult females, the death rates are $\delta_1$ and $\delta_2$. The transition from the aquatic phase to the adult phase is called the emergence. The emergence rates are denoted $\nu_1$ and $\nu_2$ and the sex ratio, corresponding to the probability that a female emerges, is $\rho$. The competition parameter is $c$, competition occurs only at aquatic phase. All these quantities are positive constants. Finally, the growth is logistic and we consider the carrying capacities, $K_1$ and $K_2$, which are positive functions which may depend on the space variable and we assume for $i=1, 2$,
$$
0< K_i \in L^\infty (\RR), \qquad 0 < \frac{1}{K_i} \in L^\infty(\RR).
$$
Finally, this system is complemented with non-negative initial conditions
$$
E_1(t=0) = E_1^0,\quad F_1(t=0)= F_1^0,\quad E_2(t=0) = E_2^0,\quad F_2(t=0)=F_2^0.
$$
System \eqref{syst} is competitive. Neglecting spatial diffusion and spatial dependency, we first show in Proposition \ref{prop:equilib} below that, under the natural assumption that the basic reproduction number of each species is greater than 1, the only stable equilibria are the ones with only one species, i.e. the two stable equilibria are $(E_1^*,F_1^*,0,0)$ and $(0,0,E_2^*,F_2^*)$, where $E_i^* = K_i(1-\frac{\delta_i(\mu_i+\nu_i)}{b_i\rho\nu_i})$ and $F_i^* = \frac{\rho\nu_i}{\delta_i} E_i^*$, $i=1,2$.

In this paper we are interested in the case of a strong competition, that is $c\gg 1$. Formally, if we let $c$ going to $+\infty$ in system \eqref{syst}, we deduce that, at the limit 
$$
E_1 E_2 = 0.
$$
In other words, the two species are segregated at the aquatic stage. Then, we introduce the quantity $w=E_1-E_2$ such that $E_1 = w_+$ and $E_2=w_-$ where $w_+$ (resp. $w_-$) denotes the positive (resp. negative) part of $w$, i.e. $w_+ = \max\{w,0\}$ and $w_- = \max\{-w,0\}$.
From \eqref{syst:E1} and \eqref{syst:E2}, we deduce easily that $w$ solves the following equation
\begin{equation}\label{eq:w}
\partial_t w = b_1 F_1 \left(1 - \frac{w_+}{K_1}\right) - (\mu_1+\nu_1) w_+ - b_2 F_2 \left(1 - \frac{w_-}{K_2}\right) + (\mu_2+\nu_2) w_-,
\end{equation}
coupled with \eqref{syst:F1} and \eqref{syst:F2} which reads now
\begin{subequations}\label{systF}
\begin{align}\label{systF:1}
&\partial_t F_1 - D_1 \partial_{xx} F_1 = \rho \nu_1 w_+ - \delta_1 F_1  \\
\label{systF:2}
&\partial_t F_2 - D_2 \partial_{xx} F_2 = \rho \nu_2 w_- - \delta_2 F_2.
\end{align}
\end{subequations}
Therefore, the asymptotic of strong competition allows us to reduce system \eqref{syst} of four equations to system \eqref{eq:w}-\eqref{systF} with three equations.
The rigorous proof of this reduction follows the arguments in \cite{samia} and are briefly recalled and adapted for this framework in the appendix of this paper (see Theorem \ref{th:reduc}). 

The mathematical model under investigation in this paper is then the reduced system \eqref{eq:w}--\eqref{systF}. For this system, we study the large time behavior in a homogeneous domain in Section \ref{sec:invhom} and in a spatially heterogeneous domain in Section \ref{sec:heterogene}.
One may summarize our main results as the following~: Assuming that the conditions $b_i \rho \nu_i > \delta_i (\mu_i+\nu_i)$ for $i=1,2$ hold.
\begin{itemize}
\item In a spatially homogeneous environment, i.e. when all parameters are constants~: Under some analytical condition on the parameters (see \eqref{eq:Gamma}--\eqref{hypGamma}), if the initial quantity of species $1$, $F_1^0$, is large enough, then the solution of \eqref{eq:w}--\eqref{systF} converges as $t$ goes to $+\infty$ towards the equilibria where only species $1$ remains, i.e. $F_1(t,x) \to F_1^*:=\frac{K_1}{b_1 \delta_1}(\rho\nu_1 b_1 - \delta_1 (\mu_1+\nu_1)) >0$ and $F_2(t,x) \to 0$ for all $x\in\RR$ (Theorem \ref{th:invasion1}).
\item In a two-patch landscape, i.e. when $K_i(x) = K_i^F \mathbf{1}_{\{x<0\}} + K_i^U \mathbf{1}_{\{x\geq 0\}}$, for $i=1,2$~: Under some analytical conditions on the parameters, we assume that the initial data are such that~: The density of species $1$ is initially large enough in the domain $\{x<0\}$ and the density of species $2$ is large enough in the domain $\{x>0\}$. Then
  the solution of \eqref{eq:w}--\eqref{systF} is such that as time goes to $+\infty$ there is invasion of the species $1$ in the domain $\{x<0\}$ and invasion of the species $2$ in the domain $\{x>0\}$.
\end{itemize}
System \eqref{eq:w}--\eqref{systF} being competitive, a comparison principle allows us to compare solutions (see Proposition \ref{prop:compar}). 
Then, in order to establish such results of invasion, we construct sub-solutions and super-solutions of the stationary system. The main difficulty is to find a condition to guarantee invasion, to do so we construct a stationary solution in the half space $\RR^+$ which links the two constant equilibria. Then, we use this stationary solution to build a sub-solution and a super-solution in the whole space $\RR$.

To conclude this section, we emphasize that in the statement of our results, it is necessary to assume that the initial data of the invading species is large enough to guarantee invasion. Indeed, due to the large competition, it seems natural that the initial density of the population should be large enough to succeed in invading. We will refer to such assumption as a \textit{non-smallness} assumption.
We mention that non-smallness assumptions to guarantee invasion of a solution of a scalar bistable reaction-diffusion equations are well-known. Indeed, in the seminal work \cite{Zlatos}, the author proves that for a scalar bistable reaction-diffusion equation with initial data $\mathbf{1}_{[-L,L]}$, there exists a critical size $L^*$ such that for $L<L^*$ the solution goes extinct whereas for $L>L^*$ there is invasion. This results has been extended in particular to more general initial data in several works, see e.g. \cite{Matano,  Muratov}.
The question of finding an optimal initial data to guarantee invasion has been also investigated in \cite{Ana, Ana2}.
In application to the control of \textit{Aedes} mosquitoes, the authors in \cite{BartonTurelli} construct initial data for which invasion occurs by considering a family of stationary solutions.
This idea has also been used in \cite{Zubelli, Bliman} in the framework of a population replacement.

\section{Reduction of the full system}\label{sec:full}

\subsection{Equilibria of the associated dynamical system}

Before considering the full reaction-diffusion system with spatial dependency, we first investigate the corresponding ODE system and study its equilibria and their stability when the carrying capacities are assumed to be constant. More precisely, let us consider the dynamical system
\begin{subequations}\label{systODE}
\begin{align}\label{systODE:E1}
&\frac{dE_1}{dt} = b_1 F_1 \left(1 - \frac{E_1}{K_1}\right) - c E_1 E_2 - (\mu_1+\nu_1) E_1 \\
\label{systODE:F1}
&\frac{dF_1}{dt} = \rho \nu_1 E_1 - \delta_1 F_1  \\
\label{systODE:E2}
&\frac{dE_2}{dt} = b_2 F_2 \left(1 - \frac{E_2}{K_2}\right) - c E_1 E_2 - (\mu_2+\nu_2) E_2 \\
\label{systODE:F2}
&\frac{dF_2}{dt} = \rho \nu_2 E_2 - \delta_2 F_2.
\end{align}
\end{subequations}
As above, this differential system is complemented with non-negative initial data.
Clearly, thanks to the Cauchy-Lipschitz theorem, there exists a global in time solution to this system, moreover the set $[0,K_1]\times \RR^+ \times [0,K_2]\times \RR^+$ is invariant.
The basic reproduction number for each species is given by
\begin{equation}\label{def:calN}
\mathcal{N}_i = \frac{b_i \rho \nu_i}{\delta_i (\mu_i+\nu_i)}, \qquad i\in \{1,2\}.
\end{equation}
This basic reproduction number is defined as the number of new individuals generated by one individual in the population. Hence, in the absence of the other species, each species is viable if its basic reproduction number is greater than $1$.
Then, we will always assume that
\begin{equation}\label{hypN}
\mathcal{N}_i > 1, \qquad i\in\{1,2\}.
\end{equation}
The following result provides the equilibria for system \eqref{systODE} and their stability.
\begin{proposition}\label{prop:equilib}
  Let us assume that \eqref{hypN} holds and introduce the positive quantities, for $i=1,2$,
  \begin{equation}\label{eq:E*F*}
    F_i^* = \frac{\rho\nu_i K_i}{\delta_i}\left(1-\dfrac{1}{\mathcal{N}_i}\right), \qquad
    E_i^* = K_i\left(1-\dfrac{1}{\mathcal{N}_i}\right).
  \end{equation}
  Then, system \eqref{systODE} has four non-negative equilibria~: the extinction equilibrium $(0,0,0,0)$, two equilibria with only one population $(E_1^*,F_1^*,0,0)$ and $(0,0,E_2^*,F_2^*)$, and a coexistence equilibrium $(\widetilde{E_1},\widetilde{F_1},\widetilde{E_2},\widetilde{F_2})$ which is non-negative provided
  \begin{equation}\label{condclarge}
  c > \max\left\{\frac{\rho\nu_2 b_2}{\delta_2 K_1}\times\frac{1-\frac{1}{\mathcal{N}_2}}{1-\frac{1}{\mathcal{N}_1}} ,\  \frac{\rho\nu_1 b_1}{\delta_1 K_2}\times\frac{1-\frac{1}{\mathcal{N}_1}}{1-\frac{1}{\mathcal{N}_2}}\right\}.
  \end{equation}

  Moreover, under condition \eqref{condclarge}, it holds~:
  \begin{itemize}
  \item[(i)] The extinction and the coexistence equilibria are unstable;
  \item[(ii)] The equilibria with one population $(E_1^*,F_1^*,0,0)$ and $(0,0,E_2^*,F_2^*)$ are locally asymptotically stable.
  \end{itemize}
\end{proposition}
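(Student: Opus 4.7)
The plan is to locate the equilibria algebraically and then analyse the Jacobian at each. From the steady versions of \eqref{systODE:F1} and \eqref{systODE:F2} I get $F_i = \rho\nu_i E_i/\delta_i$. Substituting into \eqref{systODE:E1}--\eqref{systODE:E2} and using the identity $b_i\rho\nu_i/\delta_i = (\mu_i+\nu_i)\mathcal{N}_i$, the system reduces to
\ben
E_1\bigl[(\mu_1+\nu_1)(\mathcal{N}_1-1) - (\mu_1+\nu_1)\mathcal{N}_1\,E_1/K_1 - cE_2\bigr] = 0
\een
together with the analogous identity with indices swapped. The four sub-cases produce the extinction equilibrium, the two semi-trivial equilibria with the announced formulas, and at most one interior point obtained by solving a $2\times 2$ linear system in $(E_1,E_2)$; a Cramer-rule computation rewritten in terms of $E_i^* = K_i(1-1/\mathcal{N}_i)$ shows both coordinates are positive precisely under \eqref{condclarge}.

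For the stability of the three boundary equilibria I linearise. At the origin the Jacobian is block-diagonal with $2\times 2$ species blocks of trace $-(\mu_i+\nu_i)-\delta_i<0$ and determinant $\delta_i(\mu_i+\nu_i)(1-\mathcal{N}_i)<0$ by \eqref{hypN}, giving a positive eigenvalue in each block. At $(E_1^*,F_1^*,0,0)$ the Jacobian is block upper triangular. Using the equilibrium identity, the $(E_1,E_1)$-entry simplifies to $-(\mu_1+\nu_1)\mathcal{N}_1$, so the alive $(E_1,F_1)$-block has negative trace and positive determinant $\delta_1(\mu_1+\nu_1)(\mathcal{N}_1-1)$, yielding Routh--Hurwitz stability. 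The dormant $(E_2,F_2)$-block has negative trace and determinant $\delta_2(cE_1^*+\mu_2+\nu_2)-b_2\rho\nu_2$; using $b_2\rho\nu_2 = \delta_2(\mu_2+\nu_2)\mathcal{N}_2$ and $E_1^* = K_1(1-1/\mathcal{N}_1)$, this is positive precisely under the first inequality in \eqref{condclarge}. The equilibrium $(0,0,E_2^*,F_2^*)$ is handled symmetrically.

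At the coexistence equilibrium I bypass the full spectrum and prove instability via $\det J<0$. The equilibrium relations again collapse both $(E_i,E_i)$-entries to the uniform value $-(\mu_i+\nu_i)\mathcal{N}_i$. Expanding the $4\times 4$ determinant by cofactors along the last column and systematically using $\delta_i(\mu_i+\nu_i)\mathcal{N}_i = b_i\rho\nu_i$ to eliminate $\mathcal{N}_i$ from the minors, the computation telescopes to
\ben
\det J = \frac{\widetilde{E_1}\widetilde{E_2}}{K_1 K_2}\,\bigl[b_1 b_2 \rho^2\nu_1\nu_2 - c^2\delta_1\delta_2 K_1 K_2\bigr].
\een
Condition \eqref{condclarge} forces $c$ to exceed the maximum of two positive numbers whose product equals $b_1 b_2\rho^2\nu_1\nu_2/(\delta_1\delta_2 K_1 K_2)$, so $c^2$ exceeds this product and the bracket is negative. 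A real $4\times 4$ matrix with negative determinant has its complex eigenvalues appearing in conjugate pairs (with positive product), so the real eigenvalues multiply to a negative number and at least one is positive; instability follows.

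The main technical obstacle is the determinant computation at the coexistence point: the raw $4\times 4$ determinant is unwieldy and only becomes tractable once one exploits the equilibrium identities to reduce the two diagonal entries to the uniform value $-(\mu_i+\nu_i)\mathcal{N}_i$ and then uses $\delta_i(\mu_i+\nu_i)\mathcal{N}_i = b_i\rho\nu_i$ to cancel cross terms in the minors.
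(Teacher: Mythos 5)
Your proof is correct and follows essentially the same route as the paper: reduce the stationary system to a $2\times2$ algebraic problem for the equilibria, apply trace--determinant (Routh--Hurwitz) criteria to the block-triangular Jacobians at the boundary equilibria, and conclude instability of the coexistence state from $\det J<0$, your final formula for $\det J$ agreeing exactly with the paper's. The only cosmetic differences are that you eliminate $F_i$ rather than $E_i$ and that you make explicit the observation that the two thresholds in \eqref{condclarge} have product $\rho^2\nu_1\nu_2 b_1 b_2/(\delta_1\delta_2 K_1 K_2)$, which the paper uses implicitly.
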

\begin{proof}
  To compute the equilibria we look for constants solutions of system \eqref{systODE}, then solve the system obtain by replacing the left hand side of \eqref{systODE} by $0$.
  We get $\widetilde{E_i} = \frac{\delta_i}{\rho \nu_i} \widetilde{F_i}$ and
  \begin{align*}
    &  b_1 \left(1-\frac{\delta_1}{\rho\nu_1 K_1} \widetilde{F_1}\right) - c \frac{\delta_1 \delta_2}{\rho^2 \nu_1\nu_2} \widetilde{F_2}  - \frac{(\mu_1+\nu_1)\delta_1}{\rho\nu_1} = 0 , \quad \text{ or } \widetilde{F_1} = 0,   \\
    &  b_2 \left(1-\frac{\delta_2}{\rho\nu_2 K_2} \widetilde{F_2}\right) - c \frac{\delta_1 \delta_2}{\rho^2 \nu_1\nu_2} \widetilde{F_1}  - \frac{(\mu_2+\nu_2)\delta_2}{\rho\nu_2} = 0 , \quad \text{ or } \widetilde{F_2} = 0.
  \end{align*}
  We deduce easily that $(0,0,0,0)$, $(E_1^*,F_1^*,0,0)$, and $(0,0,E_2^*,F_2^*)$ are constants solutions. For the last case, after some straightforward computations, we obtain
  \begin{align*}
    & \widetilde{F_1} = \frac{\rho \nu_1 K_1}{\delta_1}\times \dfrac{1-\dfrac{1}{\mathcal{N}_1}-\dfrac{c\delta_1}{\rho\nu_1b_1} K_2 \left(1-\dfrac{1}{\mathcal{N}_2}\right)}{1 - \dfrac{c^2 \delta_1 \delta_2 K_1 K_2}{\rho^2 \nu_1 \nu_2 b_1 b_2}},    \\
    & \widetilde{F_2} = \frac{\rho \nu_2 K_2}{\delta_2}\times \dfrac{1-\dfrac{1}{\mathcal{N}_2}-\dfrac{c\delta_2}{\rho\nu_2b_2} K_1 \left(1-\dfrac{1}{\mathcal{N}_1}\right)}{1 - \dfrac{c^2 \delta_1 \delta_2 K_1 K_2}{\rho^2 \nu_1 \nu_2 b_1 b_2}},
  \end{align*}
  which are positive provided the competition parameter $c$ is large enough as in \eqref{condclarge}.

  To study the stability of the equilibria, we consider the Jacobian matrix of system \eqref{systODE}
  $$
  J(E_1,F_1,E_2,F_2) =
  \begin{pmatrix}
    -\frac{b_2F_1}{K_1} - cE_2 -\mu_1-\nu_1 & b_1(1-\frac{E_1}{K_1}) & -cE_1 & 0  \\
    \rho \nu_1 & -\delta_1 & 0 & 0 \\
    -c E_2 & 0 & -\frac{b_2 F_2}{K_2} - c E_1 - \mu_2 - \nu_2 & b_2 (1-\frac{E_2}{K_2}) \\
    0 & 0 & \rho\nu_2 & -\delta_2 
  \end{pmatrix}.
  $$

  It is obvious to verify that for the extinction equilibrium, the Jacobian matrix $J(0,0,0,0)$ has positive eigenvalues, which implies that this equilibrium is unstable.

  Let us consider the equilibrium $(E_1^*,F_1^*,0,0)$. The characteristic polynomial of the Jacobian matrix at this equilibrium is
  \begin{align*}
    & \det(J(E_1^*,F_1^*,0,0)-XI_4) \\
    & = \begin{vmatrix} -X-\frac{b_1F_1^*}{K_1}-\mu_1-\nu_1 & b_1(1-\frac{E_1^*}{K_1}) \\
  \rho\nu_1 & -X-\delta_1 \end{vmatrix}\times \begin{vmatrix} -X-\mu_2-\nu_2-cE_1^* & b_2 \\
  \rho\nu_2 & -X-\delta_2 \end{vmatrix} \\
    & = (X^2+X(\delta_1+\frac{b_1 F_1^*}{K_1} + \mu_1+\nu_1)+\frac{\delta_1 b_1 F_1^*}{K_1}) \\
    & \quad \times  (X^2 + X(\delta_2+\mu_2+\nu_2+cE_1^*) + \delta_2(\mu_2+\nu_2+cE_1^*)-\rho\nu_2 b_2)
  \end{align*}
  All roots of this polynomial have negative real parts if and only if $\delta_2(\mu_2+\nu_2+cE_1^*) > \rho\nu_2 b_2$, which is equivalent to
  $$
  c>\frac{\rho\nu_2 b_2 - \delta_2(\mu_2+\nu_2)}{\delta_2 E_1^*}
  = \frac{\rho\nu_2 b_2}{\delta_2 K_1}\dfrac{1-\dfrac{1}{\mathcal{N}_2}}{1-\dfrac{1}{\mathcal{N}_1}}.
  $$
  The computation for the equilibrium $(0,0,E_2^*,F^*_2)$ is similar.
  Hence, under the condition \eqref{condclarge}, the equilibria $(E_1^*,F_1^*,0,0)$ and $(0,0,E_2^*,F^*_2)$ are locally asymptotically stable.

  Finally, for the coexistence equilibrium, we notice that
  $$
  \det(J(\widetilde{E_1},\widetilde{F_1},\widetilde{E_2},\widetilde{F_2})) = \left(\frac{\rho^2 \nu_1 \nu_2 b_1 b_2}{K_1 K_2} - c^2 \delta_1 \delta_2 \right) \widetilde{E_1}\widetilde{E_2} < 0,
  $$
  where we use condition \eqref{condclarge} for the last inequality.
  Then, we deduce that the matrix $J(\widetilde{E_1},\widetilde{F_1},\widetilde{E_2},\widetilde{F_2})$ admits at least one positive eigenvalue, which implies the instability of this equilibrium.
\end{proof}

As a consequence of this result, when the competition is large enough, the only stable equilibria are the one with only one population. In the rest of the paper, we will introduce space dependency by assuming an active motion of the adult population.

\subsection{Strong competition asymptotic}

We summarize briefly some standard properties of solutions of system \eqref{syst}. 
Existence and uniqueness of a solution to system \eqref{syst} is standard, see e.g. \cite[\S 3]{Perthame}. Then, assuming that initial data are given in $L^1\cap L^\infty(\RR)$, there exists a unique weak solution $(E_1,F_1,E_2,F_2) \in (C_{\text{loc}}(\RR^+; L^1\cap L^\infty(\RR)))^4$.
Moreover, by parabolic regularity, we deduce that for $i=1,2$, $F_i\in C_{\text{loc}}(\RR^+; C^1(\RR))$, $E_i \in C_{\text{loc}}(\RR^+; C(\RR))$ and $\partial_t E_i \in C_{\text{loc}}(\RR^+; L^1\cap L^\infty(\RR))$.
We have also the following estimates~:
\begin{lemma}
  Let us assume that for $i=1,2$, we have $0\leq E_i^0 \leq \|K_i\|_\infty$ and $0\leq F_i^0 \in L^\infty(\RR)$. Then for all $t>0$, the solution of \eqref{syst} with initial data $(E_1^0,F_1^0,E_2^0,F_2^0)$ verifies for all $t>0$, $x\in\RR$,
  $$
  0 \leq E_i(t,x) \leq \|K_i\|_\infty, \qquad 0 \leq F_i(t,x) \leq \max\left\{\|F_i^0\|_\infty,\frac{\rho\nu_i \|K_i\|_\infty}{\delta_i}\right\}.
  $$
\end{lemma}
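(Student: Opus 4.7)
The plan is to establish the four inequalities one after the other by a combination of quasi-positivity of the reaction terms (for the lower bounds) and ordered super-solutions (for the upper bounds), together with the standard comparison principle, which is applied to an ODE pointwise in $x$ for the $E_i$-equations and to a linear parabolic inequality for the $F_i$-equations. The existence and regularity of the solution have been recalled right before the statement, so all manipulations below are justified.

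For the nonnegativity, I would check the invariance of the positive octant. Writing the $E_i$-equation as
\beq
\pa_t E_i = b_i F_i + E_i\Bigl(-\tfrac{b_i F_i}{K_i}-cE_j-(\mu_i+\nu_i)\Bigr),
\eeq
one sees that on the face $\{E_i=0\}$ the right-hand side equals $b_i F_i\ge 0$ as soon as $F_i\ge 0$; similarly the right-hand side of the $F_i$-equation equals $\rho\nu_i E_i\ge 0$ on the face $\{F_i=0\}$ as soon as $E_i\ge 0$. The standard Stampacchia argument (testing the $E_i$-equation against $-(E_i)_-$ pointwise in $x$ and the $F_i$-equation against $-(F_i)_-$ after spatial integration, then applying a Gronwall estimate to the coupled system) then yields $E_i,F_i\ge 0$ for all $t\ge 0$.

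For the bound $E_i\le\|K_i\|_\infty$, I would freeze $x\in\RR$ and compare the $E_i$-equation, viewed as a scalar ODE in $t$ with nonautonomous coefficients depending on $F_i$ and $E_j$ (already known to be nonnegative), to the constant super-solution $\bar E_i\equiv\|K_i\|_\infty$. Plugging $\bar E_i$ into the right-hand side gives
\beq
b_i F_i\Bigl(1-\tfrac{\|K_i\|_\infty}{K_i(x)}\Bigr)-cE_1 E_2-(\mu_i+\nu_i)\|K_i\|_\infty\le 0,
\eeq
since $\|K_i\|_\infty\ge K_i(x)$ and the other two terms are nonpositive by the first step. Because $E_i^0\le\|K_i\|_\infty$ by assumption, the ODE comparison principle at the fixed $x$ yields $E_i(t,x)\le\|K_i\|_\infty$, and this holds for every $x$.

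Finally, for the bound on $F_i$, the estimate just proved on $E_i$ allows us to dominate the $F_i$-equation by a linear heat-type inequality
\beq
\pa_t F_i - D_i\pa_{xx}F_i\le \rho\nu_i\|K_i\|_\infty-\delta_i F_i.
\eeq
The constant $M_i:=\max\{\|F_i^0\|_\infty,\rho\nu_i\|K_i\|_\infty/\delta_i\}$ satisfies $F_i^0\le M_i$ and $0\ge \rho\nu_i\|K_i\|_\infty-\delta_i M_i$, so it is a super-solution of this inequality. The parabolic maximum principle on $\RR$ (applied to $F_i-M_i$, which is bounded by the $L^\infty$ control of $F_i^0$ and $E_i$, so the usual Phragmén--Lindelöf version applies) gives $F_i\le M_i$. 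The main subtlety is only the need to order the three steps correctly, since the nonnegativity and the $L^\infty$ bound on $E_i$ are used as inputs to justify the comparison for $F_i$; once the steps are carried out in this order no further difficulty arises.
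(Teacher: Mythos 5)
Your proof is correct. The paper states this lemma without proof (as a ``standard property''), but your three-step argument --- quasi-positivity plus a coupled Stampacchia/Gronwall estimate for nonnegativity, then the constant super-solution $\|K_i\|_\infty$ for the frozen-$x$ ODE in $E_i$, then the constant $\max\{\|F_i^0\|_\infty,\rho\nu_i\|K_i\|_\infty/\delta_i\}$ for the linear parabolic inequality in $F_i$ --- is exactly the standard route, and it is the same invariant-region/maximum-principle method the paper uses explicitly for the analogous Lemma \ref{lem:invariant} on the reduced system. The only blemish is notational: when you plug the super-solution $\overline{E_i}\equiv\|K_i\|_\infty$ into the right-hand side, the competition term should read $-c\,\|K_i\|_\infty\,E_j\le 0$ rather than $-cE_1E_2$; this does not affect the conclusion.
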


Finally, we postpone in the appendix the proof of the reduction of system \eqref{syst} towards \eqref{eq:w}-\eqref{systF}, see Theorem \ref{th:reduc}.

\section{Analysis of the reduced system}\label{sec:homogene}

This part is devoted to the analysis of the reduced system on $\RR$~:
\begin{equation}\label{reduced}
  \left\{
    \begin{array}{l}
      \partial_t w = b_1 F_1 \left(1-\dfrac{w_+}{K_1}\right) - (\mu_1+\nu_1) w_+ - b_2 F_2 \left(1-\dfrac{w_-}{K_2}\right) + (\mu_2+\nu_2) w_-   \\[3mm]
      \partial_t F_1 - D_1 \partial_{xx} F_1 = \rho \nu_1 w_+ - \delta_1 F_1   \\
      \partial_t F_2 - D_2 \partial_{xx} F_2 = \rho \nu_2 w_- - \delta_2 F_2,
    \end{array}
  \right.
\end{equation}
complemented with initial data $w(t=0,x) = w^0(x)$, $F_1(t=0,x) = F_1^0(x)$, $F_2(t=0,x) = F_2^0(x)$.

In order to study the long time dynamics, we will consider the stationary system.
The stationary equation of the first equation in \eqref{reduced} provides the following explicit expression of $w$ with respect to $F_1$ and $F_2$~:
\begin{equation}\label{def:calW}
w(x) = \calW(F_1,F_2)(x) := \frac{ (b_1F_1(x) -b_2 F_2(x))_+}{\frac{b_1}{K_1(x)} F_1(x) +\mu_1+\nu_1} - \frac{(b_2F_2(x) -b_1 F_1(x))_+}{\frac{b_2}{K_2(x)} F_2(x) +\mu_2+\nu_2}.
\end{equation}
Then, the stationary problem reduces to the following system~:
\begin{subequations}\label{statF}
\begin{align}\label{statF:1}
&- D_1 F_1'' = \frac{\rho \nu_1}{\frac{b_1}{K_1} F_1 +\mu_1+\nu_1} (b_1F_1 -b_2 F_2)_+ - \delta_1 F_1  \\
\label{statF:2}
  &- D_2 F_2'' = \frac{\rho \nu_2}{\frac{b_2}{K_2} F_2 +\mu_2+\nu_2} (b_2F_2 -b_1 F_1)_+ - \delta_2 F_2.
\end{align}
\end{subequations}
We first start with a comparison principle for system \eqref{reduced}.

\subsection{Comparison principle}

\begin{lemma}\label{lem:invariant}
  The set $[-K_2,K_1]\times\RR^+ \times \RR^+$ is invariant, i.e. if for all $x\in \RR$, we have $-K_2(x) \leq w^0(x) \leq K_1(x)$, $F_1^0(x) \geq 0$, $F_2^0(x) \geq 0$, then for all $t>0$ and $x\in\RR$ we have $-K_2(x) \leq w(t,x) \leq K_1(x)$, $F_1(t,x) \geq 0$, $F_2(t,x)\geq 0$.
\end{lemma}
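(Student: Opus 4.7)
The plan is to observe that the three invariance claims decouple logically, since the $w$ equation is an ODE in $t$ at each fixed $x$ (no spatial derivatives appear) while the $F_i$ equations are linear parabolic equations whose source terms $\rho\nu_i w_\pm$ are automatically nonnegative by definition of the positive/negative parts.

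First I would prove nonnegativity of $F_1$ and $F_2$. Each $F_i$ satisfies
\[
\partial_t F_i - D_i \partial_{xx} F_i + \delta_i F_i = \rho\nu_i w_\pm \geq 0,
\]
regardless of the sign or size of $w$, since $w_+, w_- \geq 0$ by construction. Together with the hypothesis $F_i^0 \geq 0$ and a standard parabolic weak maximum principle applied on $\RR$ (using that the solution lies in $L^\infty$, which follows from the existence framework sketched earlier in the paper), this gives $F_i(t,x) \geq 0$ for all $t>0$ and $x\in\RR$.

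Next I would use this nonnegativity to confine $w$. Since the $w$ equation contains no spatial derivative, for each fixed $x$ it is an ODE in $t$ with Lipschitz right-hand side (the map $s \mapsto s_\pm$ is globally Lipschitz and the coefficients $F_1(t,x), F_2(t,x), K_i(x)$ are bounded). The key is then to check the sign of the right-hand side of the $w$-equation at the two barriers. At $w = K_1(x)$ we have $w_+ = K_1(x)$ and $w_- = 0$, so the right-hand side becomes
\[
b_1 F_1 \bigl(1 - \tfrac{K_1}{K_1}\bigr) - (\mu_1+\nu_1)K_1 - b_2 F_2 + (\mu_2+\nu_2)\cdot 0 = -(\mu_1+\nu_1)K_1(x) - b_2 F_2(t,x) \leq 0,
\]
using $F_2\geq 0$ from Step~1. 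Symmetrically, at $w = -K_2(x)$ we have $w_+ = 0$ and $w_- = K_2(x)$, so the right-hand side is
\[
b_1 F_1(t,x) + (\mu_2+\nu_2) K_2(x) \geq 0,
\]
using $F_1\geq 0$. A standard ODE comparison (or Nagumo's flow-invariance criterion) applied at each $x\in\RR$ then yields $-K_2(x) \leq w(t,x) \leq K_1(x)$ for all $t\geq 0$.

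The only real subtlety is the circularity one might fear from the coupling: applying the parabolic maximum principle to $F_i$ requires $F_i \in L^\infty$, and the bound on $w$ in turn relies on $F_i\geq 0$. However, Step~1 needs only $w_\pm \geq 0$, which holds trivially, and Step~2 then uses only the output of Step~1. Thus once one grants that a sufficiently regular solution exists on the time interval under consideration (an $L^\infty$ solution obtained by a standard local-in-time fixed-point argument for the coupled system, then extended by the $L^\infty$ bounds we establish here), the two steps run independently and the proof is complete.
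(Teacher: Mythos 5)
Your proof is correct and follows essentially the same route as the paper: nonnegativity of $F_1,F_2$ via the parabolic maximum principle (the source $\rho\nu_i w_\pm$ being nonnegative regardless of the sign of $w$), combined with a pointwise-in-$x$ sign/barrier argument for the $w$ equation at the levels $w=K_1(x)$ and $w=-K_2(x)$. Your explicit observation that the two steps decouple, so that invoking $F_i\geq 0$ in the $w$ barrier is not circular, is a point the paper leaves implicit but is the same underlying argument.
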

\begin{proof}
  If there exists $\tau>0$ and $x_0\in\RR$ such that $w(\tau,x_0)>K_1(x_0)$, then $\partial_t w(\tau,x_0) < 0$. Hence if $w(t=0,x_0) \leq K_1(x_0)$ then for any $t>0$ we have $w(t,x_0)\leq K_1(x_0)$. Similarly we have the inequality $w(t,x_0)\geq -K_2(x_0)$. The non-negativity of $F_1$ and $F_2$ is a consequence of the maximum principle since we have $\partial_t F_i - D_i \partial_{xx} F_i + \delta_i F_i \geq 0$ from \eqref{reduced}.
\end{proof}

In this invariant set, system \eqref{reduced} is clearly a competitive system. Then, we may define the following notion of sub-solution and super-solution~:
\begin{definition}\label{def:sub-super}
  We say that $(\underline{w},\underline{F_1},\underline{F_2})$ and $(\overline{w},\overline{F_1},\overline{F_2})$ is a couple of sub-solution and super-solution of system \eqref{reduced} if
  for all $t\geq 0$ and $x\in\RR$, $(\underline{w}(t,x),\underline{F_1}(t,x),\underline{F_2}(t,x))\in [-K_2(x),K_1(x)]\times\RR^+\times \RR^+$, $(\overline{w}(t,x),\overline{F_1}(t,x),\overline{F_2}(t,x))\in [-K_2(x),K_1(x)]\times\RR^+\times \RR^+$, and if it satisfies
  \begin{equation}\label{def:sub}
    \left\{
      \begin{array}{l}
        \partial_t \underline{w} \leq b_1 \underline{F_1} \left(1-\dfrac{\underline{w}_+}{K_1}\right) - (\mu_1+\nu_1) \underline{w}_+ - b_2 \overline{F_2} \left(1-\dfrac{\underline{w}_-}{K_2}\right) + (\mu_2+\nu_2) \underline{w}_-  \\[3mm]
        \partial_t \underline{F_1} - D_1 \partial_{xx} \underline{F_1} \leq \rho \nu_1 \underline{w}_+ - \delta_1 \underline{F_1}   \\[2mm]
        \partial_t \overline{F_2} - D_2 \partial_{xx} \overline{F_2} \geq \rho \nu_2 \underline{w}_- - \delta_2 \overline{F_2},
      \end{array}
    \right.
  \end{equation}
  \begin{equation}\label{def:super}
    \left\{
      \begin{array}{l}
        \partial_t \overline{w} \geq b_1 \overline{F_1} \left(1-\dfrac{\overline{w}_+}{K_1}\right) - (\mu_1+\nu_1) \overline{w}_+ - b_2 \underline{F_2} \left(1-\dfrac{\overline{w}_-}{K_2}\right) + (\mu_2+\nu_2) \overline{w}_-  \\[3mm]
        \partial_t \overline{F_1} - D_1 \partial_{xx} \overline{F_1} \geq \rho \nu_1 \overline{w}_+ - \delta_1 \overline{F_1}   \\[2mm]
        \partial_t \underline{F_2} - D_2 \partial_{xx} \underline{F_2} \leq \rho \nu_2 \overline{w}_- - \delta_2 \underline{F_2},
        \end{array}
    \right.
  \end{equation}
  together with the following conditions on the initial data
  $$
  \underline{w}(t=0) \leq w^0 \leq \overline{w}(t=0), \quad
  \underline{F_1}(t=0) \leq F_1^0  \leq \overline{F_1}(t=0), \quad
  \underline{F_2}(t=0) \leq F_2^0  \leq \overline{F_2}(t=0).
  $$
\end{definition}
In the following Proposition we state a comparison principle, whose proof is postponed to the Appendix~:
\begin{proposition}\label{prop:compar}
  Let $(\underline{w}, \underline{F_1}, \underline{F_2})$ and $(\overline{w}, \overline{F_1}, \overline{F_2})$ be respectively sub- and super-solutions of \eqref{reduced} in the sense of Definition \ref{def:sub-super}.
  Let us assume moreover that $(\underline{w}, \underline{F_1}, \underline{F_2})\in [-K_2,K_1]\times\RR^+\times \RR^+$ and $(\overline{w}, \overline{F_1}, \overline{F_2})\in [-K_2,K_1]\times\RR^+\times \RR^+$.
  Then, for all $t>0$, we have
  $$
  \underline{w} \leq \overline{w}, \quad \underline{F_1} \leq \overline{F_1}, \quad \underline{F_2} \leq \overline{F_2}.
  $$
\end{proposition}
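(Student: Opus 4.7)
The plan is to introduce the signed differences
$U := \overline{w} - \underline{w}$, $V_1 := \overline{F_1} - \underline{F_1}$, $V_2 := \overline{F_2} - \underline{F_2}$,
which are all nonnegative at $t=0$ by the ordering of initial data in Definition \ref{def:sub-super}, and to show via a coupled energy estimate of Gronwall type that the three negative parts $U_-$, $V_1^-$, $V_2^-$ remain identically zero. All quantities live in the invariant region $[-K_2,K_1]\times\RR^+\times\RR^+$ (Lemma \ref{lem:invariant}), so the sub- and super-solutions are in $L^\infty$, with uniform pointwise bounds.

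The first step is to derive the pointwise differential inequalities for $U,V_1,V_2$ by subtracting \eqref{def:sub} from \eqref{def:super}. For the $F_i$ equations this yields $\partial_t V_1 - D_1 \partial_{xx} V_1 + \delta_1 V_1 \geq \rho\nu_1(\overline{w}_+ - \underline{w}_+)$ and the analogue with $\rho\nu_2(\underline{w}_- - \overline{w}_-)$ for $V_2$. The $1$-Lipschitz monotonicity of $y\mapsto y_\pm$, in the form $y_+ - z_+ \leq (y-z)_+$ and $y_- - z_- \leq (z-y)_+$, gives the pointwise bounds $\overline{w}_+ - \underline{w}_+ \geq -U_-$ and $\underline{w}_- - \overline{w}_- \geq -U_-$, so both inequalities reduce to
$$\partial_t V_i - D_i \partial_{xx} V_i + \delta_i V_i \;\geq\; -\rho\nu_i\,U_-, \qquad i=1,2.$$
For the $w$ equation, writing the reaction as $f(w,F_1,F_2)$ and telescoping through the intermediate value $f(\overline{w},\underline{F_1},\overline{F_2})$, I use that on the invariant region $f$ is affine in $(F_1,F_2)$ with coefficients $b_1(1-\overline{w}_+/K_1)\in[0,b_1]$ and $b_2(1-\underline{w}_-/K_2)\in[0,b_2]$, and globally Lipschitz in $w$ with a constant $L$ depending only on the $L^\infty$-bounds and the parameters, to obtain
$$\partial_t U \;\geq\; b_1\bigl(1-\tfrac{\overline{w}_+}{K_1}\bigr)V_1 + b_2\bigl(1-\tfrac{\underline{w}_-}{K_2}\bigr)V_2 - L\,|U| \;\geq\; -b_1 V_1^- - b_2 V_2^- - L\,(U_+ + U_-).$$

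The second step is the energy estimate. I multiply the $U$-inequality by $-U_-$ and each $V_i$-inequality by $-V_i^-$ (reversing the inequalities since these multipliers are nonpositive) and integrate over $x\in\RR$ against an exponential weight $\phi(x)=e^{-\alpha|x|}$ with $\alpha>0$ small enough to make all integrals finite. Stampacchia's chain rule for Lipschitz compositions yields $-U_-\partial_t U = \tfrac12\partial_t(U_-^2)$ and the analogue for $V_i^-$; integration by parts in the diffusive term produces $-D_i\int(\partial_x V_i^-)^2\phi\,dx$ plus a drift term in $\phi'$ absorbed into that dissipation by Young when $\alpha$ is small; the $U_+$ contribution cancels via $U_+U_-=0$; and Young's inequality on the cross terms $U_- V_i^- \leq (U_-^2+(V_i^-)^2)/2$ closes the argument. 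Setting $P(t) := \int_{\RR} U_-^2\,\phi\,dx$ and $Q_i(t) := \int_{\RR} (V_i^-)^2\,\phi\,dx$, the above yields
$$\frac{d}{dt}\bigl(P+Q_1+Q_2\bigr) \;\leq\; C\,\bigl(P+Q_1+Q_2\bigr)$$
for a constant $C$ depending only on the parameters and the $L^\infty$-bounds. Since $P(0)=Q_1(0)=Q_2(0)=0$ by hypothesis, Gronwall's lemma forces $P\equiv Q_1\equiv Q_2\equiv 0$, hence $U,V_1,V_2 \geq 0$ a.e., which is the claim.

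The main obstacle is the twofold non-smoothness: the reaction itself contains $w_\pm$, and the energy argument forces the additional cut-offs $U_-, V_i^-$. All four of these functions are only Lipschitz, so pointwise identities such as $-U_-\partial_t U = \tfrac12\partial_t(U_-^2)$ and the sign computation $V_i^- \partial_{xx} V_i \rightsquigarrow -(\partial_x V_i^-)^2$ after integration by parts must be interpreted via the Sobolev chain rule for Lipschitz compositions; the $L^\infty$-regularity from Lemma \ref{lem:invariant} and the parabolic regularity of the $F_i$ legitimize these manipulations. A secondary, purely technical difficulty is the non-integrability over $\RR$, which is taken care of by the exponential weight $\phi$ through a small-$\alpha$ argument controlling the drift term $\phi'\partial_x V_i$.
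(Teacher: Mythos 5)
Your proof is correct and follows essentially the same route as the paper: a Stampacchia-type $L^2$ energy estimate on the negative parts of the differences (the paper's $(\delta w)_+$, $(\delta F_1)_+$, $(\delta F_2)_-$ are exactly your $U_-$, $V_1^-$, $V_2^-$), closed by a Gronwall argument. The only deviations are minor: you insert the weight $e^{-\alpha|x|}$ to guarantee integrability over $\RR$ (the paper integrates unweighted), and you control the $w$-reaction by a Lipschitz constant $L$ --- which tacitly requires $\underline{F_1},\overline{F_2}\in L^\infty$, a bit more than the stated hypotheses, though harmless in every application (cf. Corollary \ref{cor:boundF}) --- whereas the paper avoids this by using that the coefficients of $(\underline{w})_+-(\overline{w})_+$ and $(\underline{w})_--(\overline{w})_-$ have favorable signs when tested against $(\delta w)_+$.
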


A direct consequence of this result is the following estimate~:
\begin{corollary}\label{cor:boundF}
  Let us assume that $-K_2(x) \leq w^0(x) \leq K_1(x)$, $F_1^0(x) \geq 0$, $F_2^0(x) \geq 0$. Then the solution of \eqref{reduced} with initial data $(w^0,F_1^0,F_2^0)$ verifies, for all $t>0$, $x\in\RR$,
  $$
  0\leq F_1(t,x) \leq \max\left\{\|F_1^0\|_\infty, \frac{\rho\nu_1 \|K_1\|_\infty}{\delta_1}\right\}, \qquad
  0\leq F_2(t,x) \leq \max\left\{\|F_2^0\|_\infty, \frac{\rho\nu_2 \|K_2\|_\infty}{\delta_2}\right\}.
  $$
\end{corollary}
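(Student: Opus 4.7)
The plan is to apply the comparison principle (Proposition \ref{prop:compar}) with an explicit pair of sub- and super-solutions that are constant in time. For the sub-solution I would take $(\underline{w},\underline{F_1},\underline{F_2}) = (-K_2(x), 0, 0)$ and for the super-solution $(\overline{w},\overline{F_1},\overline{F_2}) = (K_1(x), M_1, M_2)$, where
$$
M_i := \max\left\{\|F_i^0\|_\infty, \frac{\rho \nu_i \|K_i\|_\infty}{\delta_i}\right\}, \qquad i=1,2.
$$
These two triples lie in the invariant set of Lemma \ref{lem:invariant}, and they bracket the initial data by the hypotheses on $(w^0, F_1^0, F_2^0)$ together with the definition of $M_i$.

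Next I would verify the differential inequalities in Definition \ref{def:sub-super}. Since $K_1, K_2$ are time-independent and $M_1, M_2$ are constants, all time and spatial derivatives vanish, so the inequalities reduce to pointwise algebraic checks. For $\underline{w} = -K_2$ one has $\underline{w}_+ = 0$ and $\underline{w}_- = K_2$, so \eqref{def:sub} becomes
$$
0 \leq (\mu_2+\nu_2)K_2, \qquad 0 \leq 0, \qquad 0 \geq \rho\nu_2 K_2 - \delta_2 M_2,
$$
the last inequality being guaranteed by the lower bound $M_2 \geq \rho\nu_2 \|K_2\|_\infty/\delta_2$. Symmetrically, for $\overline{w} = K_1$ one has $\overline{w}_+ = K_1$ and $\overline{w}_- = 0$, so \eqref{def:super} reduces to
$$
0 \geq -(\mu_1+\nu_1)K_1, \qquad 0 \geq \rho\nu_1 K_1 - \delta_1 M_1, \qquad 0 \leq 0,
$$
and the middle inequality is ensured by the choice of $M_1$.

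Once these inequalities are in place, Proposition \ref{prop:compar} immediately gives $-K_2 \leq w \leq K_1$ together with $0 = \underline{F_1} \leq F_1 \leq \overline{F_1} = M_1$ and $0 = \underline{F_2} \leq F_2 \leq \overline{F_2} = M_2$, which is exactly the statement of the corollary. I do not anticipate any genuine obstacle: the whole argument is a direct structural consequence of the already-proved comparison principle, the only content being the selection of constants $M_i$ large enough to dominate both the initial $L^\infty$ norm of $F_i^0$ and the reaction source $\rho\nu_i K_i$ produced by the extreme values of $w$.
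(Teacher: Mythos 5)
Your choice of constant barriers and the algebraic verifications are all correct, and this is exactly the intended route: the paper gives no proof of the corollary beyond calling it a direct consequence of Proposition \ref{prop:compar}, so the content is precisely the pointwise checks you carry out (the constants $M_i$ dominating both $\|F_i^0\|_\infty$ and $\rho\nu_i\|K_i\|_\infty/\delta_i$ are the right ones, and the fact that $w$ carries no diffusion term makes the $x$-dependence of $K_2$ harmless).

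There is, however, one step that does not go through as written: the final invocation of the comparison principle. Proposition \ref{prop:compar} concludes $\underline{w}\leq\overline{w}$, $\underline{F_i}\leq\overline{F_i}$ for the \emph{couple} of sub- and super-solutions; applied once to your pair $\bigl((-K_2,0,0),(K_1,M_1,M_2)\bigr)$, neither member of which is the solution, it only yields the vacuous inequalities $-K_2\leq K_1$ and $0\leq M_i$. To sandwich the actual solution you must apply the proposition twice, each time inserting the solution itself (which satisfies the defining inequalities with equality) as one member of the couple, and because of the competitive structure the $F_2$-slots get crossed: to obtain $w\leq K_1$, $F_1\leq M_1$, $0\leq F_2$ take $(\underline{w},\underline{F_1},\underline{F_2})=(w,F_1,0)$ and $(\overline{w},\overline{F_1},\overline{F_2})=(K_1,M_1,F_2)$, so that \eqref{def:sub} is the exact system and \eqref{def:super} reduces to your checks on $(K_1,M_1,0)$; to obtain $-K_2\leq w$, $0\leq F_1$, $F_2\leq M_2$ take $(\underline{w},\underline{F_1},\underline{F_2})=(-K_2,0,F_2)$ and $(\overline{w},\overline{F_1},\overline{F_2})=(w,F_1,M_2)$, so that \eqref{def:super} is the exact system and \eqref{def:sub} reduces to your checks on $(-K_2,0,M_2)$. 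All the inequalities you verified are exactly the ones these two applications require, so the repair is purely organizational, but as stated your single application does not reach the solution.
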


Finally, we conclude this paragraph by mentioning that the notion of sub-solutions and super-solutions for the stationary system \eqref{statF} may be deduced straightforwardly from Definition \ref{def:sub-super}~:
\begin{definition}\label{def:sub-superstat}
We say that $(\underline{F_1},\underline{F_2})$ and $(\overline{F_1},\overline{F_2})$ is a couple of sub-solution and super-solution for \eqref{statF} if
  \begin{equation}\label{def:substat}
    \left\{
      \begin{array}{l}
        - D_1 \underline{F_1}'' \leq \dfrac{\rho \nu_1}{\dfrac{b_1}{K_1} \underline{F_1} +\mu_1+\nu_1} (b_1\underline{F_1} -b_2 \overline{F_2})_+ - \delta_1 \underline{F_1}   \\[3mm]
        - D_2 \overline{F_2}'' \geq \dfrac{\rho \nu_2}{\dfrac{b_2}{K_2} \overline{F_2} +\mu_2+\nu_2} (b_2\overline{F_2} -b_1 \underline{F_1})_+ - \delta_2 \overline{F_2},
      \end{array}
    \right.
  \end{equation}
  \begin{equation}\label{def:superstat}
    \left\{
      \begin{array}{l}
        - D_1 \overline{F_1}'' \geq \dfrac{\rho \nu_1}{\dfrac{b_1}{K_1} \overline{F_1} +\mu_1+\nu_1} (b_1\overline{F_1} -b_2 \underline{F_2})_+ - \delta_1 \overline{F_1}   \\[3mm]
        - D_2 \underline{F_2}'' \leq \dfrac{\rho \nu_2}{\dfrac{b_2}{K_2} \underline{F_2} +\mu_2+\nu_2} (b_2\underline{F_2} -b_1 \overline{F_1})_+ - \delta_2 \underline{F_2}.
        \end{array}
    \right.
  \end{equation}
\end{definition}  

\subsection{A stationary solution in a homogeneous half space}

In this part, we consider that $K_1(x) = K_1$ and $K_2(x) = K_2$ are constants.
We investigate existence of a stationary solution linking the two stable steady states on the half space $(0,+\infty)$.
The aim of this section is to prove the following result~:
\begin{proposition}\label{prop:halfspace}
  Let us assume that $\Gamma_1(F_1^*)>0$ where $\Gamma_1$ is defined in Lemma \ref{lem:estim2} below.
  Then, there exists a solution of the stationary system \eqref{statF} on $(0,+\infty)$ with the boundary conditions $(F_1(0),F_2(0)) = (0,F_2^*)$ and $(F_1(+\infty),F_2(+\infty)) = (F_1^*,0)$, which is such that $F_1$ is non-decreasing and $F_2$ is non-increasing.

  We denote $(\mathbb{F}_1,\mathbb{F}_2)$ such a stationary solution.
\end{proposition}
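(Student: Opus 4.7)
The plan is to obtain $(\mathbb{F}_1, \mathbb{F}_2)$ as the limit, as $L \to +\infty$, of solutions to the stationary system \eqref{statF} on bounded intervals $(0, L)$ with Dirichlet data $(F_1,F_2)(0) = (0, F_2^*)$ and $(F_1,F_2)(L) = (F_1^*, 0)$. The system \eqref{statF} is competitive, and the constant states $(F_1^*, 0)$ and $(0, F_2^*)$ are spatially homogeneous equilibria (Proposition \ref{prop:equilib}), so they form an ordered pair of super- and sub-solutions in the sense of Definition \ref{def:sub-superstat} (both inequalities in \eqref{def:substat}--\eqref{def:superstat} hold as equalities for them). A monotone iteration scheme adapted to the competitive order, compatible with the prescribed Dirichlet data, then produces a solution $(F_1^L, F_2^L)$ on $(0, L)$ satisfying $0 \leq F_1^L \leq F_1^*$ and $0 \leq F_2^L \leq F_2^*$.

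Next I would prove that $F_1^L$ is non-decreasing and $F_2^L$ is non-increasing. The natural tool is the sliding method: for each $\tau \in (0, L)$, the shifted pair $(F_1^L(\cdot + \tau), F_2^L(\cdot + \tau))$ on $(0, L-\tau)$ dominates the unshifted pair at both endpoints in the competitive order (thanks to the explicit Dirichlet data at $0$ and $L$), and the stationary analogue of the comparison principle in Proposition \ref{prop:compar} propagates this pointwise domination to the interior, which is precisely the monotonicity claim.

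Third, I would pass to the limit $L \to +\infty$. The $L^\infty$ bounds on $(F_1^L, F_2^L)$, together with standard interior elliptic regularity and boundary regularity at $x = 0$ (where the Dirichlet data are independent of $L$), yield uniform $C^{2,\alpha}$ estimates on every compact subset of $[0, +\infty)$. By Arzelà--Ascoli and a diagonal extraction, a subsequence converges in $C^2_{\text{loc}}([0, +\infty))$ to a pair $(\mathbb{F}_1, \mathbb{F}_2)$ which solves \eqref{statF} on $(0, +\infty)$, satisfies the boundary condition at $x=0$, and inherits the monotonicity from $(F_1^L, F_2^L)$.

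The main obstacle is to identify the limits $(\mathbb{F}_1(+\infty), \mathbb{F}_2(+\infty))$ as $(F_1^*, 0)$. Monotonicity and boundedness give the existence of limits $F_i^\infty \in [0, F_i^*]$; a Barbalat-type argument (uniform $C^2$ bounds from the equation together with the monotone convergence) shows $\mathbb{F}_i'(x) \to 0$ as $x \to +\infty$, so that $(F_1^\infty, F_2^\infty)$ must be a non-negative constant equilibrium of \eqref{statF}, hence one of $(0,0)$, $(F_1^*, 0)$, $(0, F_2^*)$, or the coexistence state $(\widetilde{F_1}, \widetilde{F_2})$. Here the hypothesis $\Gamma_1(F_1^*) > 0$ enters decisively: using the quantitative estimate provided by Lemma \ref{lem:estim2}, I would construct a non-trivial, compactly supported sub-solution for the first equation of \eqref{statF} placed sufficiently far to the right, showing that $\mathbb{F}_1$ cannot remain arbitrarily small at infinity. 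Combined with the monotonicity and the dichotomy between the admissible equilibria, this excludes the undesired limits and forces $(F_1^\infty, F_2^\infty) = (F_1^*, 0)$, completing the construction of $(\mathbb{F}_1, \mathbb{F}_2)$.
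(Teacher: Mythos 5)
Your overall scaffolding --- exhaustion by bounded intervals $(0,L)$ with Dirichlet data between the two constant equilibria, monotone iteration in the competitive order, sliding for monotonicity, and a diagonal limit --- is a legitimate alternative to the paper's direct construction on the half-line, and the first three steps can be made to work (with the caveat that the sliding step compares two \emph{exact} solutions of a nonlinear competitive system with ordered boundary data, which is not an immediate consequence of a comparison principle between a sub- and a super-solution; you need the full sliding machinery with a strong maximum principle at the critical shift, much as the paper itself needs a delicate truncation argument for the same purpose). Note also that the reduced stationary system \eqref{statF} has no positive coexistence equilibrium (the two positive parts cannot be simultaneously positive), so the only constant states to consider at infinity are $(0,0)$, $(F_1^*,0)$ and $(0,F_2^*)$.

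The genuine gap is in your last step, which is the heart of the proposition: excluding the degenerate limit $(\mathbb{F}_1,\mathbb{F}_2)\equiv(0,F_2^*)$, i.e.\ preventing the transition layer of $(F_1^L,F_2^L)$ from escaping to $+\infty$ as $L\to\infty$. Your proposed mechanism --- a compactly supported sub-solution for the $F_1$-equation ``placed sufficiently far to the right'' --- is circular: to verify the sub-solution inequality for \eqref{statF:1} you need an upper bound on $F_2$ on the support of the bump, and in the degenerate scenario you are trying to exclude, the only a priori bound available there is $F_2\le F_2^*$. With that crude bound, the existence of a bump sub-solution requires
$\int_0^{F_1^*}\bigl(\frac{\rho\nu_1}{\frac{b_1}{K_1}y+\mu_1+\nu_1}(b_1y-b_2F_2^*)_+-\delta_1 y\bigr)\,dy>0$,
which is strictly stronger than the hypothesis $\Gamma_1(F_1^*)>0$: the whole point of the definition \eqref{eq:Gamma} is that $\overline{F_2}$ is evaluated along the decaying profile $-\sqrt{D_1/\delta_1}\,\ln(1-y/F_1^*)$, so the smallness of $F_2$ in the region where $F_1$ is large is built into $\Gamma_1$. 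The paper breaks this circularity by constructing the \emph{ordered pair} $(\underline{F_1},\overline{F_2})$ of Lemmas \ref{lem:estim2} and \ref{lem:supsol} simultaneously --- $\overline{F_2}$ is an explicit decaying super-solution and $\underline{F_1}$ solves $-D_1U''=\Gamma_1'(U)$ with $U(+\infty)=\arg\max_{[0,F_1^*]}\Gamma_1>0$ --- and then sandwiches the exact solution, so that $\mathbb{F}_1(+\infty)\ge\arg\max_{[0,F_1^*]}\Gamma_1>0$ forces $\mathbb{F}_1(+\infty)=F_1^*$. To complete your argument you would have to import that full pair and prove $F_1^L\ge\underline{F_1}$ and $F_2^L\le\overline{F_2}$ uniformly in $L$ (the boundary comparisons do hold, since $\underline{F_1}(L)\le F_1^*$ and $\overline{F_2}(L)\ge 0$); at that point the bounded-domain detour buys essentially nothing over the paper's direct half-line sub/super-solution argument.
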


The proof of this result is based on the construction of a sub-solution and of a super-solution. Then, we first construct sub-solution and super-solution for this system in the sense given in Definition \ref{def:sub-superstat}.
Before proving this Proposition, we set out a few technical lemmas, the proofs of which are given in the Appendix \ref{app:lem}.
\begin{lemma}\label{estim_simpl}
  Let us denote, for all $x>0$,
  $$
  \overline{F_1}(x) := F_1^* \left(1-e^{-\sqrt{\frac{\delta_1}{D_1}}x}\right), \qquad \qquad 
  \underline{F_2}(x) := F_2^* e^{-\sqrt{\frac{\delta_2}{D_2}} x}.
  $$
  Then, we have
  \begin{align*}
    - D_1 \overline{F_1}'' & \geq \frac{\rho \nu_1}{\frac{b_1}{K_1} \overline{F_1} +\mu_1+\nu_1} (b_1\overline{F_1} -b_2 \underline{F_2})_+ - \delta_1 \overline{F_1}     \\
    - D_2 \underline{F_2}'' & \leq \frac{\rho \nu_2}{\frac{b_2}{K_2} \underline{F_2} +\mu_2+\nu_2} (b_2\underline{F_2} -b_1 \overline{F_1})_+ - \delta_2 \underline{F_2}.
  \end{align*}
\end{lemma}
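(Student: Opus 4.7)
The plan is to reduce each inequality to an elementary algebraic check by exploiting the exponential structure of the candidate functions together with the defining relation of the equilibrium $F_1^{*}$.

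First I would compute the two second derivatives directly: $\overline{F_1}'' = -F_1^{*}\tfrac{\delta_1}{D_1}e^{-\sqrt{\delta_1/D_1}\,x}$ and $\underline{F_2}'' = F_2^{*}\tfrac{\delta_2}{D_2}e^{-\sqrt{\delta_2/D_2}\,x}$. This gives the clean identities
\begin{equation*}
-D_1\overline{F_1}'' + \delta_1\overline{F_1} = \delta_1 F_1^{*}, \qquad -D_2\underline{F_2}'' + \delta_2\underline{F_2} = 0.
\end{equation*}
The inequality for $\underline{F_2}$ then reduces to $0 \leq \tfrac{\rho\nu_2}{\frac{b_2}{K_2}\underline{F_2}+\mu_2+\nu_2}(b_2\underline{F_2}-b_1\overline{F_1})_+$, which is trivially true because the right-hand side is a ratio of non-negative quantities.

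The inequality for $\overline{F_1}$ becomes the requirement
\begin{equation*}
\delta_1 F_1^{*}\;\geq\;\frac{\rho\nu_1 (b_1\overline{F_1}-b_2\underline{F_2})_+}{\tfrac{b_1}{K_1}\overline{F_1}+\mu_1+\nu_1}.
\end{equation*}
I would drop $b_2\underline{F_2}\geq 0$ to upper bound the numerator by $\rho\nu_1 b_1 \overline{F_1}$, and then use the key identity obtained from the definition of $F_1^{*}$ and $\mathcal{N}_1$, namely $\tfrac{b_1}{K_1}F_1^{*}+\mu_1+\nu_1 = \tfrac{\rho\nu_1 b_1}{\delta_1}$. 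Writing $\overline{F_1} = s F_1^{*}$ with $s = 1-e^{-\sqrt{\delta_1/D_1}\,x}\in[0,1)$, the denominator becomes $s\tfrac{\rho\nu_1 b_1}{\delta_1}+(1-s)(\mu_1+\nu_1)$, and after multiplying out the inequality collapses to $(1-s)\delta_1(\mu_1+\nu_1)\geq 0$, which is obvious.

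There is no substantive obstacle here: the calculation is deliberately designed so that the exponential profiles solve linear equations whose right-hand sides match $\delta_i F_i^{*}$ or $0$, while the nonlinear terms on the right-hand side of \eqref{statF} are dominated (respectively, bounded below by $0$) using only monotonicity and the algebraic identity for $F_1^{*}$. The one item to be careful about is checking that $s\in[0,1)$ implies the rearrangement goes in the right direction, which is why the identity $\tfrac{b_1}{K_1}F_1^{*}+\mu_1+\nu_1 = \tfrac{\rho\nu_1 b_1}{\delta_1}$ is precisely the equality case, so any $\overline{F_1}<F_1^{*}$ produces a strict slack in favor of the super-solution inequality.
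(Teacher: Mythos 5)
Your proof is correct and follows essentially the same route as the paper: compute the second derivatives to get $-D_1\overline{F_1}''=\delta_1(F_1^*-\overline{F_1})$ and $-D_2\underline{F_2}''=-\delta_2\underline{F_2}$, use the equilibrium identity $\tfrac{b_1}{K_1}F_1^*+\mu_1+\nu_1=\tfrac{\rho\nu_1 b_1}{\delta_1}$, drop the $b_2\underline{F_2}$ term, and note that the positive part is non-negative for the second inequality. The only cosmetic difference is that the paper invokes the monotonicity of $y\mapsto \tfrac{\rho\nu_1 b_1 y}{\frac{b_1}{K_1}y+\mu_1+\nu_1}$ together with $\overline{F_1}\leq F_1^*$, whereas you verify the same comparison by the explicit substitution $\overline{F_1}=sF_1^*$.
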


\begin{lemma}\label{lem:estim2}
  With the notations of Lemma \ref{estim_simpl}, let $\tilde{L}$ be the unique positive constant such that $b_1\overline{F_1}(\tilde{L}) = b_2\underline{F_2}(\tilde{L})$. We define
  \begin{equation}\label{def:F2bar}
  \overline{F_2}(x) = \left\{
    \begin{array}{ll}
      F_2^* \left(1-e^{-\sqrt{\frac{\delta_2}{D_2}}\tilde{L}} \sinh \left(\sqrt{\frac{\delta_2}{D_2}} x\right)\right), \qquad  & \text{ for } x\in (0,\tilde{L}),  \\
      F_2^* \cosh \left(\sqrt{\frac{\delta_2}{D_2}} \tilde{L}\right) e^{-\sqrt{\frac{\delta_2}{D_2}}x}, \qquad  & \text{ for } x \geq \tilde{L}.
    \end{array}
  \right.
  \end{equation}
  Let us denote
  \begin{equation}\label{eq:Gamma}
    \Gamma_1(\chi) := \int_0^{\chi} \left(\frac{\rho \nu_1}{\frac{b_1}{K_1} y + \mu_1 + \nu_1} \left(b_1 y - b_2 \overline{F_2} \left(-\sqrt{\frac{D_1}{\delta_1}}\ln\left(1-\frac{y}{F_1^*}\right)\right)\right)_+ - \delta_1 y \right)\,dy.
  \end{equation}
  Let us assume that
  \begin{equation}\label{hypGamma}
    \Gamma_1(F_1^*)>0.
  \end{equation}
  Then, there exists a non-decreasing solution, denoted $\underline{F_1}$, of the system
  \begin{equation}\label{eq:subsys}
  \left\{
    \begin{array}{l}
      - D_1 U'' = \Gamma_1'(U),   \\[4mm]
      U(0) = 0, \qquad U(+\infty) = \arg\max_{[0,F_1^*]} \Gamma_1 > 0.
    \end{array}
  \right.
\end{equation}
Moreover, $\arg\max_{[0,F_1^*]} \Gamma_1 = F_1^*$ if $\dfrac{\delta_2}{\delta_1}\geq \dfrac{D_2}{D_1}$, whereas $\arg\max_{[0,F_1^*]} \Gamma_1 < F_1^*$ if $\dfrac{\delta_2}{\delta_1} < \dfrac{D_2}{D_1}$.
\end{lemma}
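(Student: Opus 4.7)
I split the argument into three stages, followed by a note on what I expect to be the main difficulty.

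\emph{Setup and basic properties of $\Gamma_1$.} The map $x\mapsto b_1\overline{F_1}(x)-b_2\underline{F_2}(x)$ is continuous and strictly increasing on $[0,+\infty)$, with values $-b_2 F_2^*<0$ at $x=0$ and $b_1 F_1^*>0$ at $x=+\infty$, so by the intermediate value theorem $\tilde L$ exists and is unique. A short computation shows that both pieces of \eqref{def:F2bar} equal $F_2^*(1+e^{-2\sqrt{\delta_2/D_2}\,\tilde L})/2$ at $x=\tilde L$, hence $\overline{F_2}$ is continuous on $\RR^+$; one checks it is also strictly positive and bounded. Using the identity $(b_1/K_1)F_1^*+\mu_1+\nu_1=b_1\rho\nu_1/\delta_1$ (an immediate consequence of \eqref{eq:E*F*}--\eqref{def:calN}), the integrand of $\Gamma_1$ may be rewritten, in the region where the positive part is active, as
\[
\frac{\delta_1\,(b_1/K_1)\,y\,(F_1^*-y)\;-\;\rho\nu_1\,b_2\,\overline{F_2}(\phi(y))}{(b_1/K_1)\,y+\mu_1+\nu_1},\qquad \phi(y):=-\sqrt{D_1/\delta_1}\,\ln\!\bigl(1-y/F_1^*\bigr).
\]
This gives $\Gamma_1\in C^1([0,F_1^*])$ with $\Gamma_1(0)=0$ and $\Gamma_1'(F_1^*)=0$ (the latter since $\overline{F_2}(+\infty)=0$). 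By \eqref{hypGamma} and continuity, $\Gamma_1$ attains a maximum on $[0,F_1^*]$ of value at least $\Gamma_1(F_1^*)>0$; I denote by $M$ the smallest such maximizer, so that $\Gamma_1(v)<\Gamma_1(M)$ on $[0,M)$.

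\emph{Construction of $\underline{F_1}$ via energy conservation.} The autonomous ODE $-D_1 U''=\Gamma_1'(U)$ has the first integral $\mathcal{E}(U,U'):=\tfrac{D_1}{2}(U')^2+\Gamma_1(U)$. A non-decreasing heteroclinic from $(0,U_0')$ to $(M,0)$ must therefore have $U_0'=\sqrt{2\Gamma_1(M)/D_1}>0$, with
\[
x=\int_0^{U(x)}\frac{dv}{\sqrt{(2/D_1)\,\bigl(\Gamma_1(M)-\Gamma_1(v)\bigr)}}.
\]
The integrand is bounded at $v=0$ because $\Gamma_1(M)>\Gamma_1(0)=0$, and by minimality of $M$ the denominator is strictly positive on $(0,M)$, so this formula defines a $C^2$, strictly increasing solution $U$ on some interval $[0,x_\infty)$. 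A local expansion of $\Gamma_1(M)-\Gamma_1(v)$ at $v=M$ shows it vanishes at least quadratically (non-degenerate interior critical point, or boundary point $F_1^*$ under the $\alpha\geq 1$ regime analyzed below), whence the integrand has a $|v-M|^{-1}$ singularity and the integral diverges; thus $x_\infty=+\infty$ and $U(x)\to M$. Setting $\underline{F_1}:=U$ yields the desired solution of \eqref{eq:subsys}.

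\emph{Identification of the maximizer.} Writing $u:=1-y/F_1^*$, one has $\phi(y)\to+\infty$ as $u\to0^+$, so the exponential tail of $\overline{F_2}$ gives
\[
\overline{F_2}(\phi(y))=F_2^*\cosh(\sqrt{\delta_2/D_2}\,\tilde L)\,u^{\alpha},\qquad \alpha:=\sqrt{\tfrac{D_1\,\delta_2}{D_2\,\delta_1}}.
\]
Plugging this in the rewritten formula for $\Gamma_1'$ from the first stage, one obtains the expansion $\Gamma_1'(y)=C_1 u - C_2 u^{\alpha}+o(\max(u,u^\alpha))$ with explicit positive constants $C_1,C_2$. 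When $\alpha>1$ ($\delta_2/\delta_1>D_2/D_1$) the linear term dominates, so $\Gamma_1'>0$ in a left neighborhood of $F_1^*$ and $M=F_1^*$; when $\alpha<1$ ($\delta_2/\delta_1<D_2/D_1$) the $u^\alpha$ term dominates with a negative sign, forcing $\Gamma_1$ to decrease near $F_1^*$ and hence $M<F_1^*$. The borderline $\alpha=1$ falls in the first case once one verifies $C_1\geq C_2$ from the explicit characterization of $\tilde L$ via $b_1\overline{F_1}(\tilde L)=b_2\underline{F_2}(\tilde L)$.

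The step I expect to be most delicate is the second one: ensuring that the phase-plane trajectory reaches $M$ only at $x=+\infty$ (i.e.\ does not stall at an earlier critical point of $\Gamma_1$ nor overshoot) rests on the minimality of $M$ and on the precise order of vanishing of $\Gamma_1(M)-\Gamma_1(v)$ at $v=M$; a secondary subtlety is the borderline case $\alpha=1$, which requires tracking the explicit constants $C_1$ and $C_2$ to close the dichotomy at the stated inequality.
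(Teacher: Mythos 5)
Your construction of $\underline{F_1}$ via the first integral $\tfrac{D_1}{2}(U')^2+\Gamma_1(U)$ is sound and essentially coincides with the paper's argument (the paper phrases it as the Cauchy problem $U'=\sqrt{\tfrac{2}{D_1}(\max_{[0,F_1^*]}\Gamma_1-\Gamma_1(U))}$, $U(0)=0$, but the content is the same), and your treatment of $\tilde L$, of $\overline{F_2}$, and your rewriting of the integrand are correct. The genuine gap is in the identification of the maximizer when $\zeta:=\sqrt{\delta_2D_1/(\delta_1D_2)}\geq 1$ (your $\alpha$). Your argument there is purely local: the expansion $\Gamma_1'(y)=C_1u-C_2u^{\zeta}+o(\max(u,u^\zeta))$ with $u=1-y/F_1^*$ only shows that $\Gamma_1'>0$ on some left neighborhood $(F_1^*-\eps,F_1^*)$, i.e.\ that $F_1^*$ is a local maximum from the left. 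It does not exclude an interior point $y^*\leq F_1^*-\eps$ with $\Gamma_1(y^*)>\Gamma_1(F_1^*)$, in which case your $M$ (the smallest maximizer) would be strictly less than $F_1^*$ and the stated conclusion would fail. To conclude $\arg\max_{[0,F_1^*]}\Gamma_1=F_1^*$ you need global control of the sign changes of $\Gamma_1'$ on all of $(0,F_1^*)$, which the local expansion at $u=0$ cannot provide.

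The paper obtains this global control by observing that the identity $\overline{F_2}(\phi(y))=F_2^*\cosh(\sqrt{\delta_2/D_2}\,\tilde L)\,u^{\zeta}$ (your $\phi$) is exact, not merely asymptotic, for all $y$ beyond the crossing value $\chi_0:=\overline{F_1}(L_0)$, where $L_0>\tilde L$ is the unique point with $b_1\overline{F_1}(L_0)=b_2\overline{F_2}(L_0)$. This yields $\Gamma_1'(y)=-\delta_1 y$ on $(0,\chi_0)$ and the exact factorization $\Gamma_1'(y)=\frac{F_1^*-y}{\frac{b_1}{K_1}y+\mu_1+\nu_1}\big(\frac{\delta_1 b_1}{K_1}y-A(F_1^*-y)^{\zeta-1}\big)$ on $(\chi_0,F_1^*)$, with $A=\rho\nu_1 b_2F_2^*\cosh(\sqrt{\delta_2/D_2}\,\tilde L)/(F_1^*)^{\zeta}>0$. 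For $\zeta\geq1$ the bracket is increasing in $y$ and negative at $\chi_0$, so $\Gamma_1'$ changes sign at most once, and only from negative to positive; hence $\Gamma_1$ is decreasing then increasing on $[0,F_1^*]$, and since $\Gamma_1(F_1^*)>0=\Gamma_1(0)$ the global maximum sits at $F_1^*$. For $\zeta<1$ the bracket is concave and tends to $-\infty$ at $F_1^*$, giving the ``no root or two roots'' alternative and an interior maximizer. Note also that this global argument is what settles your borderline case $\zeta=1$: it is the hypothesis $\Gamma_1(F_1^*)>0$, not the characterization of $\tilde L$, that forces the (constant-slope) bracket to vanish somewhere, i.e.\ $C_1>C_2$ in your notation; otherwise $\Gamma_1$ would be non-increasing and $\Gamma_1(F_1^*)\leq 0$.
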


\begin{remark}\label{rem:Gamma}
  It is worth noticing that thanks to the expression of $\overline{F_2}$, we have also that the expression of $\Gamma_1$ may be simplified into
  $$
  \Gamma_1(\chi) = \int_0^\chi  \left( \frac{\rho \nu_1}{\frac{b_1}{K_1} y + \mu_1 + \nu_1} \left(b_1 y - b_2 F_2^* \cosh\left(\sqrt{\frac{\delta_2}{D_2}}\tilde{L}\right) \left(1-\frac{y}{F_1^*}\right)^{\zeta}\right)_+ - \delta_1 y\right)\,dy,
  $$
  with $\zeta = \sqrt{\dfrac{\delta_2 D_1}{\delta_1 D_2}}$.  
\end{remark}

\begin{lemma}\label{lem:supsol}
  Let $\underline{F_1}$ and $\overline{F_2}$ be defined as in Lemma \ref{lem:estim2}.
  Then, we have, on $(0,+\infty)$,
  \begin{align*}
    & - D_1 \underline{F_1}'' \leq \frac{\rho \nu_1}{\frac{b_1}{K_1} \underline{F_1} +\mu_1+\nu_1} (b_1\underline{F_1} -b_2 \overline{F_2})_+ - \delta_1 \underline{F_1}     \\
    & - D_2 \overline{F_2}''  \geq \frac{\rho \nu_2}{\frac{b_2}{K_2} \overline{F_2} +\mu_2+\nu_2} (b_2\overline{F_2} -b_1 \underline{F_1})_+ - \delta_2 \overline{F_2}  \\
    & (\underline{F_1},\overline{F_2})(0) = (0,F_2^*), \qquad (\underline{F_1},\overline{F_2})(+\infty) = (\arg\max_{[0,F_1^*]} \Gamma_1 , 0).
  \end{align*}
  Moreover, we have $\underline{F_1} \leq \overline{F_1}$ and $\underline{F_2} \leq \overline{F_2}$ on $(0,+\infty)$.
\end{lemma}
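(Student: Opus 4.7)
The proof amounts to checking four items: the boundary conditions, the orderings $\underline{F_2}\le\overline{F_2}$ and $\underline{F_1}\le\overline{F_1}$, and the two sub/super-solution differential inequalities. The boundary conditions are immediate: those for $\underline{F_1}$ are conclusions of Lemma \ref{lem:estim2}, while the piecewise formula gives $\overline{F_2}(0)=F_2^*$ (since $\sinh(0)=0$) and $\overline{F_2}(+\infty)=0$ from the pure exponential branch. A direct differentiation on each branch shows that $\overline{F_2}$ is $C^1$ and strictly decreasing on $(0,+\infty)$, with matching values and derivatives at $\tilde L$. The inequality $\underline{F_2}\le\overline{F_2}$ follows from explicit computation: on $[\tilde L,+\infty)$ one has $\overline{F_2}/\underline{F_2}=\cosh(\sqrt{\delta_2/D_2}\tilde L)\ge 1$, and on $(0,\tilde L)$ the elementary inequality $1-e^{-\sqrt{\delta_2/D_2}\tilde L}\sinh(\sqrt{\delta_2/D_2}x)\ge e^{-\sqrt{\delta_2/D_2}x}$ (which reduces, after multiplying by $2e^{\sqrt{\delta_2/D_2}\tilde L}$, to a standard comparison of exponentials) concludes the argument.

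For the more delicate ordering $\underline{F_1}\le\overline{F_1}$, I would use an energy/phase-plane argument. Multiplying $-D_1\underline{F_1}''=\Gamma_1'(\underline{F_1})$ by $\underline{F_1}'$ and integrating, using $\underline{F_1}'(+\infty)=0$, gives the first integral
\[
\tfrac{D_1}{2}(\underline{F_1}')^2=\Gamma_1(U_\infty)-\Gamma_1(\underline{F_1}), \qquad U_\infty:=\arg\max_{[0,F_1^*]}\Gamma_1,
\]
whereas $\overline{F_1}$ satisfies the first-order identity $(\overline{F_1}')^2=(\delta_1/D_1)(F_1^*-\overline{F_1})^2$. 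The crucial pointwise bound is
\[
\Gamma_1'(y)\le\frac{\rho\nu_1 b_1 y}{\tfrac{b_1}{K_1}y+\mu_1+\nu_1}-\delta_1 y\le\delta_1(F_1^*-y), \qquad y\in[0,F_1^*],
\]
where the first inequality drops the non-positive contribution of $\overline{F_2}$ inside the positive part, and the second exploits the monotonicity of the rational fraction together with the identity $\tfrac{b_1}{K_1}F_1^*+\mu_1+\nu_1=\mathcal{N}_1(\mu_1+\nu_1)$ coming from the definition of $F_1^*$. Integrating from $U$ to $U_\infty$ yields $2(\Gamma_1(U_\infty)-\Gamma_1(U))\le\delta_1(F_1^*-U)^2$; rewritten in terms of the inverse maps $\underline{F_1}^{-1}$ and $\overline{F_1}^{-1}$ on $[0,U_\infty]$ (both vanishing at $0$ and strictly increasing), this shows $(\underline{F_1}^{-1})'\ge(\overline{F_1}^{-1})'$, hence $\underline{F_1}\le\overline{F_1}$ on $(0,+\infty)$.

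The two differential inequalities then fall out. For $\underline{F_1}$, one writes $-D_1\underline{F_1}''(x)=\Gamma_1'(\underline{F_1}(x))$ and observes that the argument of $\overline{F_2}$ appearing inside $\Gamma_1'$ is $\overline{F_1}^{-1}(\underline{F_1}(x))\le x$ by the previous step and the monotonicity of $\overline{F_1}$. Since $\overline{F_2}$ is non-increasing and $s\mapsto(a-s)_+$ is non-increasing in $s$, the sub-solution inequality follows at once. For $\overline{F_2}$ I would split at $\tilde L$: on $(0,\tilde L)$, direct differentiation gives $-D_2\overline{F_2}''=\delta_2(F_2^*-\overline{F_2})$, and the target inequality reduces to $\delta_2 F_2^*\ge\tfrac{\rho\nu_2 b_2\overline{F_2}}{\tfrac{b_2}{K_2}\overline{F_2}+\mu_2+\nu_2}$, which follows from the monotonicity of $u\mapsto\tfrac{\rho\nu_2 b_2 u}{\tfrac{b_2}{K_2}u+\mu_2+\nu_2}$, the equality at $u=F_2^*$ (by the definition of $F_2^*$), and $\overline{F_2}\le F_2^*$ on this interval. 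On $(\tilde L,+\infty)$ the pure exponential form gives $-D_2\overline{F_2}''=-\delta_2\overline{F_2}$, so the inequality reduces to $(b_2\overline{F_2}(x)-b_1\underline{F_1}(x))_+\le 0$, i.e. $b_2\overline{F_2}\le b_1\underline{F_1}$ on this half-line. I expect this last point to be the main obstacle: it is where one must quantitatively leverage the definition of $\tilde L$ through $b_1\overline{F_1}(\tilde L)=b_2\underline{F_2}(\tilde L)$, the assumption \eqref{hypGamma}, and the phase-plane identity for $\underline{F_1}$ established above, to argue that the monotone heteroclinic $\underline{F_1}$ outgrows the exponentially decaying $\overline{F_2}$ past the matching point.
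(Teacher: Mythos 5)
Your proposal correctly handles everything except the super-solution inequality for $\overline{F_2}$ on $(\tilde L,+\infty)$, which you explicitly leave open, and this is a genuine gap. Moreover, the reduction you propose there --- proving $b_2\overline{F_2}\le b_1\underline{F_1}$ on $(\tilde L,+\infty)$ --- cannot be carried out as stated: at $x=\tilde L$ one has
\[
b_2\overline{F_2}(\tilde L)=\cosh\!\left(\sqrt{\tfrac{\delta_2}{D_2}}\,\tilde L\right) b_2\underline{F_2}(\tilde L)
=\cosh\!\left(\sqrt{\tfrac{\delta_2}{D_2}}\,\tilde L\right) b_1\overline{F_1}(\tilde L)
> b_1\overline{F_1}(\tilde L)\ \ge\ b_1\underline{F_1}(\tilde L),
\]
since $\tilde L>0$, so $(b_2\overline{F_2}-b_1\underline{F_1})_+>0$ on a right neighbourhood of $\tilde L$ while $-D_2\overline{F_2}''=-\delta_2\overline{F_2}$ there. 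You should know that the paper's own proof does not close this step either: its final chain of inequalities rests on the claim $b_2\overline{F_2}\,\mathbf{1}_{(0,\tilde L)}\ge (b_2\overline{F_2}-b_1\underline{F_1})_+$, which on $(\tilde L,+\infty)$ is precisely the assertion you could not prove, and it is written there without justification. So you have correctly isolated the one point that the published argument glosses over, but as it stands neither your proposal nor the paper establishes the super-solution property of $\overline{F_2}$ past $\tilde L$.

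For the remaining steps your route is sound and close to the paper's. The ordering $\underline{F_2}\le\overline{F_2}$ is obtained in the paper by noting that $\underline{F_2}$ is a sub-solution of the linear equation $-D_2v''=\delta_2F_2^*\mathbf{1}_{(0,\tilde L)}-\delta_2v$ satisfied by $\overline{F_2}$; your explicit exponential comparison is an acceptable substitute. For $\underline{F_1}\le\overline{F_1}$, the paper proves the second-order bound $-D_1\underline{F_1}''\le\delta_1(F_1^*-\underline{F_1})$ and concludes by comparison with $\overline{F_1}$, whereas you convert the same pointwise estimate $\Gamma_1'(y)\le\delta_1(F_1^*-y)$ into the first-order inequality $\underline{F_1}'\le\sqrt{\delta_1/D_1}\,(F_1^*-\underline{F_1})$ via the conserved quantity of \eqref{eq:subsys}; both are valid, and yours is arguably more elementary. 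The sub-solution inequality for $\underline{F_1}$ (using $\overline{F_1}^{-1}(\underline{F_1}(x))\le x$ and the monotonicity of $\overline{F_2}$) and the super-solution inequality for $\overline{F_2}$ on $(0,\tilde L)$ (using $\overline{F_2}\le F_2^*$ and the monotonicity of $u\mapsto \rho\nu_2b_2u/(\tfrac{b_2}{K_2}u+\mu_2+\nu_2)$) are exactly the paper's arguments.
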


We are now in position to prove the main result of this section~:  \\[2mm]
\textbf{Proof of Proposition \ref{prop:halfspace}.}
We have constructed in Lemma \ref{estim_simpl} and Lemma \ref{lem:supsol} super- and sub-solutions for the half-space problem \eqref{statF:1}--\eqref{statF:2}. The system being monotonous, it is classical to deduce by the so-called super- and sub-solutions technique that there exists a solution $(\mathbb{F}_1,\mathbb{F}_2)$ of \eqref{statF} which are such that $\underline{F_1}\leq \mathbb{F}_1 \leq \overline{F_1}$ and $\underline{F_2}\leq \mathbb{F}_2 \leq \overline{F_2}$. Notice that these inequalities imply that $\mathbb{F}_1(0) = 0$, $\mathbb{F}_2(0) = F_2^*$, and $\mathbb{F}_2(+\infty) = 0$.
We are left to verify that this solution is such that $\mathbb{F}_1$ is non-decreasing, $\mathbb{F}_2$ is non-increasing, and $\mathbb{F}_1(+\infty) = F_1^*$.

To prove the monotony, we assume by contradiction that it is not the case and that there exists $x_0>0$ and $\delta_0>0$ such that $\mathbb{F}_1'(x_0) = 0$ and $\mathbb{F}_1'(x)<0$ for $x\in (x_0,x_0+\delta_0)$ (the same argument works also for $\mathbb{F}_2$ instead of $\mathbb{F}_1$).
Then, either there exists $x_1 \geq x_0$ such that $\mathbb{F}_2'(x_1) = 0$ and $\mathbb{F}_2'(x)>0$ on $(x_1,x_1+\delta_1)$, or $\mathbb{F}_2$ is non-increasing on $(0,+\infty)$.
Then, we define,
$$
\widetilde{\mathbb{F}_1}(x) = \left\{
  \begin{array}{ll}
    \mathbb{F}_1(x),   & \text{ for } x<x_0,  \\
    \max\{\mathbb{F}_1(x_0), \mathbb{F}_1(x)\}, & \text{ for } x\geq x_0,
  \end{array}
\right.
$$
and, if $x_1<+\infty$,
$$
\widetilde{\mathbb{F}_2}(x) = \left\{
  \begin{array}{ll}
    \mathbb{F}_2(x),   & \text{ for } x<x_1,  \\
    \min\{\mathbb{F}_2(x_1), \mathbb{F}_2(x)\}, & \text{ for } x\geq x_1.
  \end{array}
\right.
$$
else, $\widetilde{\mathbb{F}_2} = \mathbb{F}_2$.
Then, $\mathbb{F}_1 \leq \widetilde{\mathbb{F}_1}$ and $\mathbb{F}_2 \geq \widetilde{\mathbb{F}_2}$.

We compute the equation verified by $\widetilde{\mathbb{F}}_1$ for $x>x_0$. We distinguish two cases. If $\widetilde{\mathbb{F}_1}(x)=\mathbb{F}_1(x)$, we have
\begin{align*}
  -D_1 \widetilde{\mathbb{F}_1}''
  & = - \mathbb{F}_1 '' = \frac{\rho \nu_1}{\frac{b_1}{K_1} \mathbb{F}_1+\mu_1+\nu_1} ( b_1 \mathbb{F}_1 - b_2 \mathbb{F}_2)_+ - \delta_1 \mathbb{F}_1  \\
  & \leq \frac{\rho \nu_1}{\frac{b_1}{K_1} \widetilde{\mathbb{F}_1}+\mu_1+\nu_1} ( b_1 \widetilde{\mathbb{F}_1} - b_2 \widetilde{\mathbb{F}_2})_+ - \delta_1 \widetilde{\mathbb{F}_1}, 
\end{align*}
since $\widetilde{\mathbb{F}_1}(x) = \mathbb{F}_1(x)$ and $\widetilde{\mathbb{F}_2}\leq \mathbb{F}_2$.
Now, if $\widetilde{\mathbb{F}_1}(x) = \mathbb{F}_1(x_0)$, then, since $x_0$ is a local maximum for $\mathbb{F}_1$, we have 
\begin{align*}
  -D_1 \widetilde{\mathbb{F}_1}''(x) & = 0 \leq -D_1 \mathbb{F}_1''(x_0) =
  \frac{\rho \nu_1}{\frac{b_1}{K_1} \mathbb{F}_1(x_0)+\mu_1+\nu_1} ( b_1 \mathbb{F}_1(x_0) - b_2 \mathbb{F}_2(x_0))_+ - \delta_1 \mathbb{F}_1(x_0)  \\
  & \leq \frac{\rho \nu_1}{\frac{b_1}{K_1} \widetilde{\mathbb{F}_1}(x)+\mu_1+\nu_1} ( b_1 \widetilde{\mathbb{F}_1}(x) - b_2 \widetilde{\mathbb{F}_2}(x))_+ - \delta_1 \widetilde{\mathbb{F}_1}(x),
\end{align*}
since $\widetilde{\mathbb{F}_1}(x) = \mathbb{F}_1(x_0)$ and $\widetilde{\mathbb{F}_2}(x) \leq  \mathbb{F}_2(x)\leq  \mathbb{F}_2(x_0)$.

By the same token, we compute the equation satisfied by $\widetilde{\mathbb{F}_2}$. If $\widetilde{\mathbb{F}_2}(x) = \mathbb{F}_2(x)$, we have
\begin{align*}
  -D_2 \widetilde{\mathbb{F}_2}''
  & = - \mathbb{F}_2 '' = \frac{\rho \nu_2}{\frac{b_2}{K_2} \mathbb{F}_2+\mu_2+\nu_2} ( b_2 \mathbb{F}_2 - b_1 \mathbb{F}_1)_+ - \delta_2 \mathbb{F}_2  \\
  & \geq \frac{\rho \nu_2}{\frac{b_2}{K_2} \widetilde{\mathbb{F}_2}+\mu_2+\nu_2} ( b_2 \widetilde{\mathbb{F}_2} - b_1 \widetilde{\mathbb{F}_1})_+ - \delta_2 \widetilde{\mathbb{F}_2}, 
\end{align*}
since $\widetilde{\mathbb{F}_2}(x) = \mathbb{F}_2(x)$ and $\mathbb{F}_1\leq \widetilde{\mathbb{F}_1}$.
Now, if $\widetilde{\mathbb{F}_2}(x) = \mathbb{F}_2(x_1)$, for $x\geq x_1 \geq x_0$, then, since $x_1$ is a local minimum for $\mathbb{F}_2$, we have 
\begin{align*}
  -D_2 \widetilde{\mathbb{F}_2}''(x) & = 0 \geq -D_2 \mathbb{F}_2''(x_1) =
  \frac{\rho \nu_2}{\frac{b_2}{K_2} \mathbb{F}_2(x_1)+\mu_2+\nu_2} ( b_2 \mathbb{F}_2(x_1) - b_1 \mathbb{F}_1(x_1))_+ - \delta_2 \mathbb{F}_2(x_1)  \\
  & \geq \frac{\rho \nu_2}{\frac{b_2}{K_2} \widetilde{\mathbb{F}_2}(x)+\mu_2+\nu_2} ( b_2 \widetilde{\mathbb{F}_2}(x) - b_1 \widetilde{\mathbb{F}_1}(x))_+ - \delta_2 \widetilde{\mathbb{F}_2}(x),
\end{align*}
since $\widetilde{\mathbb{F}_2}(x) = \mathbb{F}_2(x_1)$ and $\mathbb{F}_1(x_1) \leq  \widetilde{\mathbb{F}_1}(x_1) \leq \widetilde{\mathbb{F}_1}(x)$.

Finally, we have obtained
\begin{align*}
  &  -D_1 \widetilde{\mathbb{F}_1}'' \leq \frac{\rho \nu_1}{\frac{b_1}{K_1} \widetilde{\mathbb{F}_1}+\mu_1+\nu_1} ( b_1 \widetilde{\mathbb{F}_1} - b_2 \widetilde{\mathbb{F}_2})_+ - \delta_1 \widetilde{\mathbb{F}_1}  \\
  &  -D_2 \widetilde{\mathbb{F}_2}''  \geq \frac{\rho \nu_2}{\frac{b_2}{K_2} \widetilde{\mathbb{F}_2}+\mu_2+\nu_2} ( b_2 \widetilde{\mathbb{F}_2} - b_1 \widetilde{\mathbb{F}_1})_+ - \delta_2 \widetilde{\mathbb{F}_2}, 
\end{align*}
By comparison principle, it implies $\widetilde{\mathbb{F}_1} \leq \mathbb{F}_1$ and $\widetilde{\mathbb{F}_2} \geq \mathbb{F}_2$. It is a contradiction with the existence of $x_0$.

Thus, $\mathbb{F}_1$ is non-decreasing and $\mathbb{F}_2$ is non-increasing. Moreover they are bounded (by the super and sub-solutions) thus $\mathbb{F}_1$ admits a limit at infinity which belongs to the interval $[\arg\max_{[0,F_1^*]} \Gamma_1, F_1^*]$. Let us denote $\ell_1$ this limit, then passing into the limit into \eqref{statF:1}, this limit should be such that
$$
0 = \frac{\rho \nu_1 b_1 \ell_1 }{\frac{b_1}{K_1} \ell_1 + \mu_1 +\nu_1} - \delta_1 \ell_1,
$$
which implies $\ell_1 = 0$ or $\ell_1 = F_1^*$. Since $0$ does not belong to $[\arg\max_{[0,F_1^*]} \Gamma_1, F_1^*]$, we have $\ell_1 = F_1^*$.
It concludes the proof of Proposition \eqref{prop:halfspace}. \qed

\subsection{Invasion into a spatially homogeneous domain}\label{sec:invhom}

We consider that the study domain is spatially homogeneous, i.e. $K_1$ and $K_2$ are constants.
With the stationary solution constructed in Proposition \ref{prop:halfspace}, we define for $x\in\RR$,
\begin{equation}\label{def:calF}
  \mathcal{F}_1^0(x) =
  \left\{
    \begin{array}{ll}
      \mathbb{F}_1(x),   & \text{ for } x\geq 0,  \\
      0, & \text{ for } x<0,
    \end{array}
  \right.
  \qquad
  \mathcal{F}_2^0(x) =
  \left\{
    \begin{array}{ll}
      \mathbb{F}_2(x),   & \text{ for } x\geq 0,  \\
      F_2^*, & \text{ for } x<0,
    \end{array}
  \right.  
\end{equation}
and
$$
\mathcal{W}^0(x) = \frac{(b_1 \mathcal{F}_1^0(x)-b_2\mathcal{F}_2^0(x))_+}{\frac{b_1}{K_1}\mathcal{F}_1^0(x)+\mu_1+\nu_1} - \frac{(b_2 \mathcal{F}_2^0(x)-b_1\mathcal{F}_1^0(x))_+}{\frac{b_2}{K_2}\mathcal{F}_2^0(x)+\mu_2+\nu_2}.
$$

\begin{theorem}\label{th:invasion1}
  Let us assume that $\Gamma_1(F_1^*)>0$ where $\Gamma_1$ is defined in Lemma \ref{lem:estim2} (or equivalently in Remark \ref{rem:Gamma}).
  Let us consider system \eqref{reduced} with $K_1$ and $K_2$ constants and with initial data $(w^0,F_1^0,F_2^0)$, where $-K_2\leq w^0(x)\leq K_1$ for all $x\in\RR$.

  If there exists $x_0\in \RR$ such that for all $x\in\RR$,
  $$
  w^0(x) \geq \calW^0(x-x_0), \quad F_1^0(x)\geq \mathcal{F}_1^0(x-x_0), \quad 0\leq F_2^0(x) \leq \mathcal{F}_2^0(x-x_0).
  $$
  Then, for all $x\in\RR$, we have $\ds \lim_{t\to + \infty} (w,F_1,F_2)(t,x) = (w_1^*,F_1^*,0)$, where $w_1^* = \frac{b_1 F_1^*}{\frac{b_1}{K_1} F_1^* + \mu_1 +\nu_1}$.
\end{theorem}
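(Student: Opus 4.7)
My plan is to apply a monotone iteration scheme built on the heteroclinic profile $(\mathbb{F}_1,\mathbb{F}_2)$ of Proposition~\ref{prop:halfspace}. As a first step, I verify that the triple $\bigl(\mathcal{W}^0(\cdot-x_0),\mathcal{F}_1^0(\cdot-x_0),\mathcal{F}_2^0(\cdot-x_0)\bigr)$ is a time-independent sub/super-solution of \eqref{reduced} in the sense of Definition~\ref{def:sub-super}. On $\{x>x_0\}$ the stationary system \eqref{statF} is satisfied with equality, by Proposition~\ref{prop:halfspace}. On $\{x<x_0\}$ it reduces to the equilibrium $(0,F_2^*)$, so \eqref{def:sub}--\eqref{def:super} are trivially saturated. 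At the corner $x=x_0$ the derivatives of $\mathcal{F}_1^0$ and $\mathcal{F}_2^0$ have respective jumps $\mathbb{F}_1'(0^+)\geq 0$ and $\mathbb{F}_2'(0^+)\leq 0$, so $-D_1(\mathcal{F}_1^0)''$ and $-D_2(\mathcal{F}_2^0)''$ receive Dirac contributions of signs compatible with the sub/super-solution inequalities read distributionally.

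Let $(w_s,F_{1,s},F_{2,s})$ denote the solution of \eqref{reduced} with this triple as initial data. Proposition~\ref{prop:compar} immediately yields $F_{1,s}(t,\cdot)\geq \mathcal{F}_1^0(\cdot-x_0)$ and $F_{2,s}(t,\cdot)\leq \mathcal{F}_2^0(\cdot-x_0)$ for all $t\geq 0$. Applying Proposition~\ref{prop:compar} again to $(w_s,F_{1,s},F_{2,s})$ and its time-translate by $h>0$ produces time-monotonicity (competitive case), and the analogous argument with a spatial-translate by $\eta>0$ produces spatial monotonicity; both rely on the inequalities already being satisfied at $t=0$ thanks to the monotonicity of $\mathbb{F}_1$ and $\mathbb{F}_2$. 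Bounded monotone convergence together with parabolic regularity then give a limit $(F_{1,s}^\infty,F_{2,s}^\infty)$, which is a stationary solution of \eqref{statF} with $F_{1,s}^\infty$ non-decreasing and $F_{2,s}^\infty$ non-increasing in $x$.

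The main obstacle is to show $(F_{1,s}^\infty,F_{2,s}^\infty)\equiv(F_1^*,0)$. Spatial monotonicity gives limits at $\pm\infty$ which, by passage to the stationary system, must be constant equilibria, hence belong to $\{(0,0),(F_1^*,0),(0,F_2^*)\}$ by Proposition~\ref{prop:equilib}. The sub-solution bound forces the limit at $+\infty$ to be $(F_1^*,0)$. The case $(0,0)$ at $-\infty$ is excluded because then $F_{2,s}^\infty$, being non-increasing and non-negative with zero limits at both ends, would vanish identically, and $F_{1,s}^\infty$ would be a standing front of the scalar KPP equation $-D_1 F''=\rho\nu_1 b_1 F/((b_1/K_1)F+\mu_1+\nu_1)-\delta_1 F$, of which there is none. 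The connecting case $(0,F_2^*)$ at $-\infty$ is ruled out by the assumption $\Gamma_1(F_1^*)>0$: since $F_{2,s}^\infty\leq \overline{F_2}(\cdot-x_0)$ (extending the super-solution of Lemma~\ref{lem:estim2} by $F_2^*$ on the left), $F_{1,s}^\infty$ becomes a super-solution of $-D_1 U''=\Gamma_1'(U)$, and multiplying by $(F_{1,s}^\infty)'$ and integrating over $\RR$ produces a contradiction with the strict positivity of $\Gamma_1(F_1^*)$.

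Once $(F_{1,s}^\infty,F_{2,s}^\infty)\equiv(F_1^*,0)$ is established, Proposition~\ref{prop:compar} applied to the actual solution gives $F_1\geq F_{1,s}\to F_1^*$ and $0\leq F_2\leq F_{2,s}\to 0$ pointwise in $x$. For the matching upper bound on $F_1$, construct a super-solution of the form $(\mathcal{W}(\overline{F_1},0),\overline{F_1},0)$, in which $\overline{F_1}$ is the spatially constant solution of the scalar monostable (KPP-type) ODE
\[
\frac{d\overline{F_1}}{dt}=\frac{\rho\nu_1 b_1 \overline{F_1}}{(b_1/K_1)\overline{F_1}+\mu_1+\nu_1}-\delta_1 \overline{F_1},
\]
started at $\max\{\|F_1^0\|_\infty,\rho\nu_1 K_1/\delta_1\}$; since the per-capita growth is decreasing in $\overline{F_1}$, this constant-in-$x$ super-solution converges monotonically to $F_1^*$. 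Combining with the lower bound gives $F_1\to F_1^*$ pointwise, and $w\to w_1^*$ follows from \eqref{def:calW} evaluated at $(F_1^*,0)$.
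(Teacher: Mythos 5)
Your overall architecture (launching the flow from the glued profile $(\mathcal{W}^0,\mathcal{F}_1^0,\mathcal{F}_2^0)$, verifying it is a sub/super-solution including the distributional Dirac signs at the corner, obtaining monotone-in-time convergence to a stationary limit, and then sandwiching the true solution) coincides with the paper's, and those parts are sound. The two proofs diverge at the identification of the stationary limit, and it is exactly there that your argument has a gap. To rule out the scenario where $(F_{1,s}^\infty,F_{2,s}^\infty)$ is a shifted front connecting $(0,F_2^*)$ at $-\infty$ to $(F_1^*,0)$ at $+\infty$, you claim that $F_{1,s}^\infty$ is a super-solution of $-D_1U''=\Gamma_1'(U)$ because $F_{2,s}^\infty\leq \overline{F_2}(\cdot-x_0)$. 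But $\Gamma_1'$ evaluates $\overline{F_2}$ at the point $-\sqrt{D_1/\delta_1}\,\ln\bigl(1-F_{1,s}^\infty(x)/F_1^*\bigr)$, not at $x-x_0$; to convert the spatial bound on $F_{2,s}^\infty$ into the bound needed for the energy comparison you would need $F_{1,s}^\infty\leq\overline{F_1}(\cdot-x_0)$, i.e.\ $-\sqrt{D_1/\delta_1}\,\ln\bigl(1-F_{1,s}^\infty(x)/F_1^*\bigr)\leq x-x_0$, which is not available: the time-monotone limit only satisfies $F_{1,s}^\infty\geq \mathbb{F}_1(\cdot-x_0)$ and may well exceed $\overline{F_1}(\cdot-x_0)$. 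The paper avoids this entirely with a sliding argument: it defines $\widetilde{y}=\sup\{y\geq 0:\ \mathcal{F}_1^0(\cdot+y)\leq\mathcal{F}_1^\infty,\ \mathcal{F}_2^0(\cdot+y)\geq\mathcal{F}_2^\infty\}$, shows via the strong maximum principle that a finite $\widetilde{y}$ would force a contact point and hence identity of $\mathcal{F}_1^\infty$ with $\mathbb{F}_1(\cdot+\widetilde y)$, contradicted by $\mathcal{F}_1^\infty(-\widetilde y)>0=\mathbb{F}_1(0)$, and concludes $\mathbb{F}_1(\cdot+y)\leq\mathcal{F}_1^\infty$ for every $y$, whence $F_1^*\leq\mathcal{F}_1^\infty$ and $\mathcal{F}_2^\infty=0$ in one stroke. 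You should replace your case analysis at $-\infty$ by this (or supply the missing comparison $F_{1,s}^\infty\leq\overline{F_1}(\cdot-x_0)$, which I do not see how to obtain).

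A second, more minor but genuine defect is your upper-bound super-solution $(\mathcal{W}(\overline{F_1},0),\overline{F_1},0)$. Since $\overline{F_1}$ decreases in time from a value above $F_1^*$ and $\mathcal{W}(\cdot,0)$ is increasing in its first argument, one has $\partial_t\mathcal{W}(\overline{F_1},0)<0$, whereas the $w$-inequality in \eqref{def:super} requires $\partial_t\overline{w}\geq b_1\overline{F_1}(1-\overline{w}_+/K_1)-(\mu_1+\nu_1)\overline{w}_+=0$ for this choice of $\overline{w}$; moreover $\mathcal{W}(\overline{F_1}(0),0)<K_1$ need not dominate $w^0$. The repair is to let $\overline{w}$ solve its own ODE $\overline{w}'=b_1\overline{F_1}(1-\overline{w}/K_1)-(\mu_1+\nu_1)\overline{w}$ starting from $K_1$, coupled with $\overline{F_1}'=\rho\nu_1\overline{w}-\delta_1\overline{F_1}$; this planar cooperative system is monostable and its solution decreases to $(w_1^*,F_1^*)$. (To your credit, the paper itself passes over this $\limsup$ bound silently, so your instinct to address it is right even if the construction as written fails.)
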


\textbf{Interpretation.} This theorem states that, in spatially homogeneous domain, under some kind of \textit{``non-smallness''} assumption on the initial data $F_1^0$ and assuming that the criterion $\Gamma_1(F_1^*)>0$ holds, the species $1$ may invade the whole domain whereas species $2$ goes extinct.
We notice that this criterion is a sufficient condition and is not optimal. We left open the question of finding an optimal condition on the coefficients to guarantee invasion; finding an optimal criterion for invasion in competitive system is usually a difficult question (we refer e.g. to the review paper \cite{GirardinReview}).
The \textit{``non-smallness''} assumption in this Theorem consists in assuming that $F_1^0$ is large enough in a wide region of the domain (which is here a half space), where $F_2^0$ is very small.
Obviously, by exchanging the indices $1$ and $2$ we also have similarly that under a condition $\Gamma_2(F_2^*) >0$ and a non-smallness assumption on the initial data $F_2^0$, the species $2$ invades the whole domain whereas species $1$ goes extinct.

\medskip

\begin{proof}
  Up to a translation, we may assume that $x_0=0$.
  Let us denote $(\mathcal{W},\mathcal{F}_1,\mathcal{F}_2)$ the solution of \eqref{reduced} with initial data $(\calW^0,\mathcal{F}_1^0,\mathcal{F}_2^0)$. Then, by comparison principle (see Proposition \ref{prop:compar}), we deduce that for all $t>0$, $x\in\RR$, $w(t,x)\geq \mathcal{W}(t,x)$, $F_1(t,x)\geq \mathcal{F}_1(t,x)$, and $F_2(t,x)\leq \mathcal{F}_2(t,x)$. Moreover, by the strong maximum principle, we have for all $t>0$, $\mathcal{F}_1(t)>0$, indeed from \eqref{reduced} we have
  $$
  \partial_t \mathcal{F}_1 - D_1 \partial_{xx} \mathcal{F}_1 + \delta_1 \mathcal{F}_1 \geq 0.
  $$

  Moreover, we have by construction that $(\calW^0,\mathcal{F}_1^0,\mathcal{F}_2^0)$ verifies \eqref{def:sub}. Therefore, for all $h>0$, we deduce by comparison principle (see Proposition \ref{prop:compar}) that $\mathcal{W}(h)\geq \mathcal{W}^0$, $\mathcal{F}_1(h)\geq \mathcal{F}_1^0$, and $\mathcal{F}_2(h)\leq \mathcal{F}_2^0$.
  Applying again the comparison principle, we get that for all $t>0$ and $h>0$,  $\mathcal{W}(t+h)\geq \mathcal{W}(t)$, $\mathcal{F}_1(t+h)\geq \mathcal{F}_1(t)$, and $\mathcal{F}_2(t+h)\leq \mathcal{F}_2(t)$. Hence $t\mapsto \mathcal{W}(t)$ and $t\mapsto \mathcal{F}_1$ are non-decreasing, $t\mapsto \mathcal{F}_2(t)$ is non-increasing; and since these functions are bounded, they converge as $t\to +\infty$ towards some limit denoted respectively $\mathcal{W}^\infty$, $\mathcal{F}_1^\infty$, and $\mathcal{F}_2^\infty$. This limit verifies obviously $\mathcal{W}^\infty\geq \mathcal{W}^0$, $\mathcal{F}_1^\infty\geq \mathcal{F}_1^0$, $\mathcal{F}_1^\infty>0$, $\mathcal{F}_2^\infty\leq \mathcal{F}_2^0$ and it is a stationary solution of system \eqref{reduced}.

  Let us define
  $$
  \widetilde{y} = \sup\{y\geq 0 \text{ such that } \mathcal{F}_1^0(x+y) \leq \mathcal{F}_1^\infty(x), \ \mathcal{F}_2^0(x+y)\geq \mathcal{F}_2^\infty(x) \text{ for all } x\in\RR\}.
  $$
  Then, if $\tilde{y}<\infty$, we have on $(-\tilde{y},+\infty)$, 
  \begin{align*}
    - D_1 \partial_{xx} \mathcal{F}_1^\infty
    & = \frac{\rho\nu_1}{\frac{b_1}{K_1}\mathcal{F}_1^\infty + \mu_1 + \nu_1} \Big(b_1 \mathcal{F}_1^\infty - b_2 \mathcal{F}_2^\infty\Big)_+  - \delta_1 \mathcal{F}_1^\infty  \\
    & \geq \frac{\rho\nu_1}{\frac{b_1}{K_1}\mathcal{F}_1^\infty + \mu_1 + \nu_1} \Big(b_1 \mathcal{F}_1^\infty - b_2 \mathcal{F}_2^0(\cdot+\widetilde{y})\Big)_+  - \delta_1 \mathcal{F}_1^\infty.
  \end{align*}
  As a consequence, $\mathcal{F}_1^\infty$ is a super-solution of the equation satisfied by $\mathbb{F}_1(\cdot+\widetilde{y})$ on $(-\widetilde{y},+\infty)$. Therefore, by the strong maximum principle if there is a contact point, then $\mathcal{F}_1^\infty$ and $\mathbb{F}_1(\cdot+\widetilde{y})$ should be equal everywhere. But it is not possible since $\mathcal{F}_1^\infty(-\widetilde{y})>0 = \mathbb{F}_1(0)$.
  Hence, we have shown that $\mathbb{F}_1 (\cdot+y) \leq \mathcal{F}_1^\infty$ for all $y\in\RR$. By the same token, we also have $\mathbb{F}_2 (\cdot+y) \geq \mathcal{F}_2^\infty$ for all $y\in\RR$. It implies $\|\mathbb{F}_1\|_\infty = F_1^* \leq \mathcal{F}_1^\infty$ and $\mathcal{F}_2^\infty = 0$.
  We conclude by noting that the only stationary equation of the equation verified by $\mathcal{F}_1^\infty$ which is above $F_1^*$ is the constant $\mathcal{F}_1^\infty=F_1^*$.
\end{proof}

\subsection{Numerical illustrations}\label{sec:homogene_num}

In order to illustrate the theoretical result of Theorem \ref{th:invasion1}, we provide a numerical simulation of model \eqref{syst} when $c$ is large.
We first discretize a domain of width $100$ km with a uniform Cartesian mesh of $2000$ nodes. Then, we use a semi-implicit numerical scheme to discretize system \eqref{syst} on this mesh, i.e. the diffusive part is discretized implicitly whereas the reaction term is discretized explicitly.
We consider the numerical values for the mosquitoes populations taken from \cite[Table 3]{SBD} and \cite[Table 1]{JDE} which are given in Table \ref{table1} below.
\begin{table}[ht!]
  \centering
  \begin{tabular}{ ||c|c|c||c|c|c|| } 
    \hline
    Parameter & Value & Unit & Parameter & Value & Unit  \\
    \hline\hline
    $b_1$ & $10$ & Day$^{-1}$ & $b_2$ & $8$ & Day$^{-1}$ \\
    \hline
    $\mu_1$ & $0.03$ & Day$^{-1}$ & $\mu_2$ & $0.04$ & Day$^{-1}$ \\
    \hline
    $\nu_1$ & $0.05$ & Day$^{-1}$ & $\nu_2$ & $0.05$ & Day$^{-1}$ \\
    \hline
    $\delta_1$ & $0.04$ & Day$^{-1}$ & $\delta_2$ & $0.07$ & Day$^{-1}$ \\
    \hline
    $D_1$ & $0.025$ & km$^{2}$ Day$^{-1}$ & $D_2$ & $0.025$ & km$^{2}$ Day$^{-1}$ \\
    \hline
    $\rho$ & $0.49$ & - & $c$ & $40$ & km$^{2}$ Day$^{-1}$ \\
 \hline
\end{tabular}    
\caption{Numerical values of the parameters used in the computations.}
\label{table1}
\end{table}

We display in Figure \ref{fig:homogene} the numerical results obtained with the numerical values in Table \ref{table1}. Moreover, we choose $K_1 = 2000$ km$^{-2}$ and $K_2 = 500$ km$^{-2}$. 
With these numerical values, we find $F_1^* = 1209$ km$^{-2}$ and $F_2^* \simeq 169.4$ km$^{-2}$ and $\Gamma_1(F_1^*) \simeq 14535 > 0$. The initial data are defined by $E_1^0(x) = F_1^0(x) = 1000 \,\mathbf{1}_{\{x<-15\}}$ and $E_2^0(x) = F_2^0(x) = 150 \,\mathbf{1}_{\{x>15\}}$. 
We observe that species $1$ invades the whole domain and pushes out species $2$ as announced in Theorem \ref{th:invasion1}. Although we notice that initially, in the absence of species $1$, species $2$ seems to be invasive, as soon as the two species meet, the strong competition affects the dynamics and make species $2$ to go extinct in favor of species $1$.
\begin{figure}[h!]
\centering\includegraphics[width=0.49\linewidth,height=5.7cm]{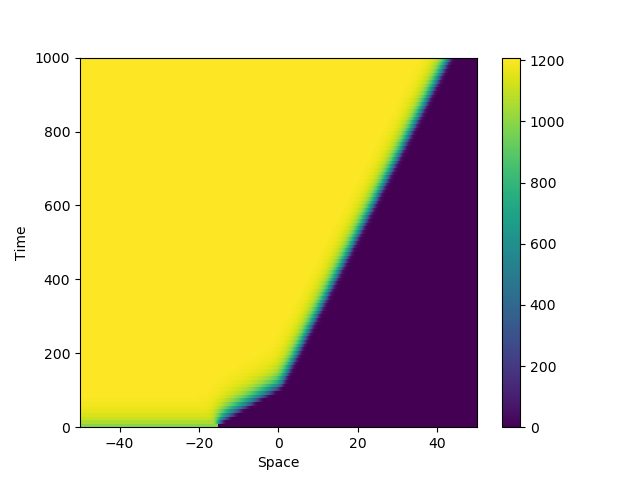} 
\includegraphics[width=0.49\linewidth,height=5.7cm]{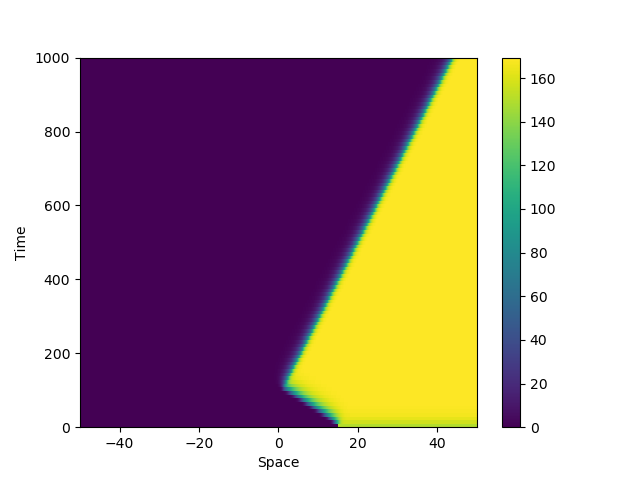}
\caption{Time dynamics of functions $F_1$ (left) and $F_2$ (right) when the condition $\Gamma_1(F_1^*)>0$ is verified. Yellow color corresponds to high density whereas blue color to low density. We observe that in this case, species $1$ invades the whole domain and pushes out species $2$. It illustrates the invasion result stated in Theorem \ref{th:invasion1}.}
\label{fig:homogene}
\end{figure}

\section{Study in a spatially heterogeneous environment}\label{sec:heterogene}

In this part, we investigate the dynamics when the carrying capacities are assumed to be spatially heterogeneous. More precisely, in the same spirit as in \cite{samia}, we consider that the carrying capacities are such that
$$
K_1(x) = K_1^F \mathbf{1}_{\{x<0\}} + K_1^U \mathbf{1}_{\{x\geq 0\}}, \quad
K_2(x) = K_2^F \mathbf{1}_{\{x<0\}} + K_2^U \mathbf{1}_{\{x\geq 0\}},
$$
modeling the fact that the left-hand side of the real line corresponds to one type of environment, for example a forest area, while the right hand side models another type of environment, for example an urban area.

We investigate system \eqref{reduced} in this heterogeneous environment.
Following the ideas in the previous Section, we first define stationary solutions and then we study the long time dynamics and the invasion of species 1 and 2 into this spatially heterogeneous environment.

\subsection{Stationary solutions in a half space}

Let us define, for $i=1,2$,
$$
F_i^{F*} = \frac{K_i^F \rho\nu_i}{\delta_i}\left(1-\dfrac{1}{\mathcal{N}_i}\right), \qquad
F_i^{U*} = \frac{K_i^U \rho\nu_i}{\delta_i}\left(1-\dfrac{1}{\mathcal{N}_i}\right),
$$
where $\mathcal{N}_i$, $i=1,2$ are defined in \eqref{def:calN}.
Then, let $\widetilde{L^F}$, respectively $\widetilde{L^U}$, be the unique positive solution to the equation
$$
b_1 F_1^{F*} \left(1-e^{-\sqrt{\frac{\delta_1}{D_1}}\widetilde{L^F}}\right) =
b_2 F_2^{F*} e^{-\sqrt{\frac{\delta_2}{D_2}} \widetilde{L^F}},
$$
respectively
$$
b_2 F_2^{U*} \left(1-e^{-\sqrt{\frac{\delta_2}{D_2}}\widetilde{L^U}}\right) =
b_1 F_1^{U*} e^{-\sqrt{\frac{\delta_1}{D_1}} \widetilde{L^U}}.
$$
Then, we introduce
\begin{equation}\label{def:Gamma1}
\Gamma_1^F(\chi) :=   \int_0^\chi  \left( \frac{\rho \nu_1}{\frac{b_1}{K_1^F} y + \mu_1 + \nu_1} \left(b_1 y - b_2 F_2^{F*} \cosh\left(\sqrt{\frac{\delta_2}{D_2}}\widetilde{L^F}\right) \left(1-\frac{y}{F_1^{F*}}\right)^{\zeta}\right)_+ - \delta_1 y\right)\,dy,
\end{equation}
with $\zeta = \sqrt{\dfrac{\delta_2 D_1}{\delta_1 D_2}}$, and
\begin{equation}\label{def:Gamma2}
\Gamma_2^U(\chi) :=   \int_0^\chi  \left( \frac{\rho \nu_2}{\frac{b_2}{K_2^U} y + \mu_2 + \nu_2} \left(b_2 y - b_1 F_1^{U*} \cosh\left(\sqrt{\frac{\delta_1}{D_1}}\widetilde{L^U}\right) \left(1-\frac{y}{F_2^{U*}}\right)^{1/\zeta}\right)_+ - \delta_2 y\right)\,dy.
\end{equation}
Then, as a consequence of Proposition \ref{prop:halfspace}, we have the following~:
\begin{lemma}\label{lem:mathbbF}
  With the above notations, we have~:
  \begin{itemize}
  \item[(i)] If $\Gamma_1^F(F_1^{F*})>0$, then there exists a solution of the stationary system \eqref{statF} on $(-\infty,0)$ complemented with boundary conditions $(F_1(-\infty),F_2(-\infty)) = (F_1^{F*},0)$, and $(F_1(0),F_2(0)) = (0,F_2^{F*})$. We denote $(\mathbb{F}_1^F,\mathbb{F}_2^F)$ such a solution, it verifies moreover that $\mathbb{F}_1^F$ is non-increasing and $\mathbb{F}_2^F$ is non-decreasing.
  \item[(ii)] If $\Gamma_2^U(F_2^{U*})>0$, then there exists a solution of the stationary system \eqref{statF} on $(0,+\infty)$ complemented with boundary conditions $(F_1(0),F_2(0)) = (F_1^{U*},0)$, and $(F_1(+\infty),F_2(+\infty)) = (0,F_2^{U*})$. We denote $(\mathbb{F}_1^U,\mathbb{F}_2^U)$ such a solution, it verifies moreover that $\mathbb{F}_1^U$ is non-increasing and $\mathbb{F}_2^U$ is non-decreasing.
  \end{itemize}
\end{lemma}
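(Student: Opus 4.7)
The plan is to deduce both assertions from Proposition \ref{prop:halfspace} by exploiting two natural symmetries of the stationary system \eqref{statF}: reflection $x \mapsto -x$ (which preserves the equations because only second-order derivatives appear in the spatial operator) and exchange of the species indices $1 \leftrightarrow 2$ (together with the corresponding exchange $K_1 \leftrightarrow K_2$, $b_1 \leftrightarrow b_2$, etc., which maps the system into itself). Under each symmetry, the role of the hypothesis $\Gamma_1(F_1^*)>0$ is simply transported to the analogous quantity, which will turn out to coincide with $\Gamma_1^F(F_1^{F*})$ or $\Gamma_2^U(F_2^{U*})$, respectively.

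For part (i), I would apply Proposition \ref{prop:halfspace} with the constant carrying capacities chosen to be $K_1 = K_1^F$ and $K_2 = K_2^F$. The corresponding equilibria are then $F_1^{F*}$ and $F_2^{F*}$, the threshold length defined in Lemma \ref{lem:estim2} becomes $\widetilde{L^F}$, and the functional $\Gamma_1$ from Remark \ref{rem:Gamma} becomes exactly $\Gamma_1^F$ defined in \eqref{def:Gamma1}; the exponent $\zeta$ is unchanged since it involves only $\delta_i$ and $D_i$. Thus under the assumption $\Gamma_1^F(F_1^{F*})>0$, Proposition \ref{prop:halfspace} yields a stationary solution $(G_1,G_2)$ on $(0,+\infty)$ with $G_1$ non-decreasing, $G_2$ non-increasing, $(G_1,G_2)(0) = (0,F_2^{F*})$, and $(G_1,G_2)(+\infty) = (F_1^{F*},0)$. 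I then set
\[
\mathbb{F}_1^F(x) := G_1(-x), \qquad \mathbb{F}_2^F(x) := G_2(-x), \qquad x<0.
\]
The reflection leaves the second derivative invariant, so $(\mathbb{F}_1^F,\mathbb{F}_2^F)$ still solves \eqref{statF}; the boundary values and monotonicities are exactly those required in (i).

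For part (ii), I would swap the roles of species $1$ and $2$: the pair $(G_1,G_2) := (F_2,F_1)$ satisfies the same system \eqref{statF} with the indices $1$ and $2$ interchanged everywhere (i.e.\ $K_i$, $b_i$, $\mu_i$, $\nu_i$, $\delta_i$, $D_i$ all permuted). Applying Proposition \ref{prop:halfspace} to this swapped system with $K_1 \leftrightarrow K_2 = K_2^U$ and $K_2 \leftrightarrow K_1 = K_1^U$, the relevant equilibria become $F_2^{U*}$ and $F_1^{U*}$, the threshold length from the analogue of Lemma \ref{lem:estim2} is precisely $\widetilde{L^U}$, and the new exponent is $\sqrt{\delta_1 D_2/(\delta_2 D_1)} = 1/\zeta$. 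One checks that the swapped $\Gamma$-functional coincides with $\Gamma_2^U$ defined in \eqref{def:Gamma2}, so the hypothesis $\Gamma_2^U(F_2^{U*})>0$ is exactly what is needed. Undoing the swap produces a solution $(\mathbb{F}_1^U,\mathbb{F}_2^U)$ on $(0,+\infty)$ with $\mathbb{F}_1^U$ non-increasing, $\mathbb{F}_2^U$ non-decreasing, $(\mathbb{F}_1^U,\mathbb{F}_2^U)(0) = (F_1^{U*},0)$, and $(\mathbb{F}_1^U,\mathbb{F}_2^U)(+\infty) = (0,F_2^{U*})$.

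There is no genuine obstacle here beyond bookkeeping: the content has already been done in Proposition \ref{prop:halfspace}, and the only thing to be careful about is verifying that the two symmetries (spatial reflection and index exchange) leave the system and its sub/super-solution construction invariant, so that the hypotheses $\Gamma_1^F(F_1^{F*})>0$ and $\Gamma_2^U(F_2^{U*})>0$ are the faithful images of $\Gamma_1(F_1^*)>0$. The most delicate point in the translation is keeping track of how the exponent $\zeta$ transforms under the species swap in (ii), which is why the definition of $\Gamma_2^U$ in \eqref{def:Gamma2} uses $1/\zeta$ rather than $\zeta$.
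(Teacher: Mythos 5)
Your proposal is correct and follows exactly the route the paper intends: the paper states this lemma without proof as ``a consequence of Proposition \ref{prop:halfspace}'', and your reflection $x\mapsto -x$ for part (i) and index swap $1\leftrightarrow 2$ for part (ii) (with the correct identification of $\Gamma_1^F$, $\Gamma_2^U$, $\widetilde{L^F}$, $\widetilde{L^U}$ and the exponent $1/\zeta$) is precisely the bookkeeping being left implicit. No gap.
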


\subsection{Biological invasion result}

We are now in position to state the main result of this section~:
\begin{theorem}\label{TH2}
  Let us assume that $\Gamma_1^F(F_1^{F*})>0$ and $\Gamma_2^U(F_2^{U*})>0$ where $\Gamma_1^F$, resp. $\Gamma_2^U$, are defined in \eqref{def:Gamma1}, resp. \eqref{def:Gamma2}. Then, we have~:
  \begin{enumerate}
  \item There exist two non-increasing functions defined on $\RR$, denoted $\mathcal{F}_1^F$ and $\mathcal{F}_1^U$ with $\mathcal{F}_1^F \leq \mathcal{F}_1^U$, and two non-decreasing functions on $\RR$, denoted $\mathcal{F}_2^F$ and $\mathcal{F}_2^U$ with $\mathcal{F}_2^U \leq \mathcal{F}_2^F$, which are such that~:
    \begin{itemize}
    \item[(i)] $\mathcal{F}_1^F(x) = 0$ for $x>0$, $\lim_{x\to +\infty} \mathcal{F}_1^U(x) = 0$, and $\lim_{x\to -\infty} \mathcal{F}_1^F(x) = F_1^{F*}$, $\lim_{x\to -\infty} \mathcal{F}_1^U(x) = \max\{F_1^{U*},F_1^{F*}\}$;
    \item[(ii)] $\mathcal{F}_2^U(x) = 0$ for $x<0$, $\lim_{x\to -\infty} \mathcal{F}_2^F(x) = 0$, and $\lim_{x\to +\infty} \mathcal{F}_2^U(x) = F_2^{U*}$, $\lim_{x\to +\infty} \mathcal{F}_2^F(x) = \max\{F_2^{U*},F_2^{F*}\}$;
    \item[(iii)] $(\mathcal{F}_1^F,\mathcal{F}_2^U)$ is a sub-solution and $(\mathcal{F}_1^U,\mathcal{F}_2^F)$ is a super-solution of the stationary system \eqref{statF}.
    \end{itemize}
  \item There exist stationary solutions $(\mathbf{F}_1^{F,\infty},\mathbf{F}_2^{F,\infty})$ and $(\mathbf{F}_1^{U,\infty},\mathbf{F}_2^{U,\infty})$ such that $\mathbf{F}_1^{F,\infty} \leq \mathbf{F}_1^{U,\infty}$, $\mathbf{F}_2^{U,\infty} \leq \mathbf{F}_2^{F,\infty}$, and
  \begin{align*}
    & \lim_{x\to +\infty} (\mathbf{F}_1^{F,\infty},\mathbf{F}_2^{F,\infty})(x) = (0,F_2^{U*}), \quad  \lim_{x\to -\infty} (\mathbf{F}_1^{F,\infty},\mathbf{F}_2^{F,\infty})(x) = (F_1^{F*},0), \\
    & \lim_{x\to +\infty} (\mathbf{F}_1^{U,\infty},\mathbf{F}_2^{U,\infty})(x) = (0,F_2^{U*}), \quad  \lim_{x\to -\infty} (\mathbf{F}_1^{U,\infty},\mathbf{F}_2^{U,\infty})(x) = (F_1^{F*},0).
  \end{align*}
  \item If there exist $x_0^F>0$ and $x_0^U>0$ such that
  \begin{subequations}\label{th:coninit}
    \begin{align}
      &  \mathcal{F}_1^F (x+x_0^F) \leq F_1^0(x) \leq \mathcal{F}_1^U(x-x_0^U), \qquad
         \mathcal{F}_2^U(x-x_0^U) \leq F_2^0(x) \leq \mathcal{F}_2^F(x+x_0^F),  \\
      &  \mathcal{W}(\mathcal{F}_1^F,\mathcal{F}_2^F)(x+x_0^F) \leq w^0(x) \leq \mathcal{W}(\mathcal{F}_1^U,\mathcal{F}_2^U)(x-x_0^U),
    \end{align}
  \end{subequations}
  where we use the notation in \eqref{def:calW}. Then, the solution of \eqref{reduced} with such initial data $(w^0,F_1^0,F_2^0)$ is such that, for all $x\in\RR$, 
  \begin{align*}
    \mathbf{F}_1^{F,\infty}(x) \leq \liminf_{t\to +\infty} F_1(t,x) \leq \limsup_{t\to +\infty} F_1(t,x) \leq \mathbf{F}_1^{U,\infty}(x),  \\
    \mathbf{F}_2^{U,\infty}(x) \leq \liminf_{t\to +\infty} F_2(t,x) \leq \limsup_{t\to +\infty} F_2(t,x) \leq \mathbf{F}_2^{F,\infty}(x).    
  \end{align*}
\end{enumerate}
\end{theorem}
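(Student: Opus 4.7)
The proof naturally splits along the three items and, in structure, mirrors that of Theorem \ref{th:invasion1} adapted to the two-patch geometry.

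For item (1), the plan is to construct the four profiles by gluing the half-space stationary solutions $\mathbb{F}_i^F,\mathbb{F}_i^U$ furnished by Lemma \ref{lem:mathbbF} with constants (or simple monotone connecting profiles) on the complementary half-line. Specifically, I would set $\mathcal{F}_1^F(x)=\mathbb{F}_1^F(x)\mathbf{1}_{\{x<0\}}$ and $\mathcal{F}_2^U(x)=\mathbb{F}_2^U(x)\mathbf{1}_{\{x>0\}}$; $\mathcal{F}_1^U$ is equal to $\mathbb{F}_1^U$ on $\RR^+$ and is extended on $\RR^-$ by a non-decreasing profile converging to $\max\{F_1^{U*},F_1^{F*}\}$ at $-\infty$, and symmetrically for $\mathcal{F}_2^F$. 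The sub-/super-solution inequalities of Definition \ref{def:sub-superstat} are then verified piecewise using Lemmas \ref{estim_simpl}--\ref{lem:supsol} inside each patch; across the interface $x=0$, the kinks must be oriented in the distributional sense (concave jump for a sub-solution, convex for a super-solution), which follows from the monotonicity of the half-space solutions. The ordering $\mathcal{F}_1^F\leq\mathcal{F}_1^U$, $\mathcal{F}_2^U\leq\mathcal{F}_2^F$ and the prescribed limits at $\pm\infty$ are then read off the explicit construction.

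For item (2), since the reduced stationary system \eqref{statF} is competitive and admits the ordered pair of sub-/super-solutions constructed in item (1), a standard monotone iteration scheme produces two stationary solutions trapped in this order interval. The minimal one, obtained by iterating upward from $\mathcal{F}_1^F$ in the first component and downward from $\mathcal{F}_2^F$ in the second, is identified as $(\mathbf{F}_1^{F,\infty},\mathbf{F}_2^{F,\infty})$, and the maximal one as $(\mathbf{F}_1^{U,\infty},\mathbf{F}_2^{U,\infty})$. To identify the limits at $\pm\infty$, one passes to the limit inside each patch (where the carrying capacities are constant) and uses monotonicity together with the sandwich bounds: the only equilibria of the constant-coefficient stationary equations compatible with the constraints are $(F_1^{F*},0)$ at $-\infty$ and $(0,F_2^{U*})$ at $+\infty$, exactly as in the concluding argument of the proof of Proposition \ref{prop:halfspace}.

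For item (3), the plan is to invoke the parabolic comparison principle of Proposition \ref{prop:compar}. The hypothesis \eqref{th:coninit} places the initial datum between the translated sub-solution $(\mathcal{F}_1^F(\cdot+x_0^F),\mathcal{F}_2^F(\cdot+x_0^F))$ and the translated super-solution $(\mathcal{F}_1^U(\cdot-x_0^U),\mathcal{F}_2^U(\cdot-x_0^U))$, so the solution of \eqref{reduced} is sandwiched between two auxiliary evolutions starting from these data. Arguing as in Theorem \ref{th:invasion1}, each auxiliary evolution is time-monotone in the competitive order ($F_1$ non-decreasing and $F_2$ non-increasing from below, the reverse from above), so it converges pointwise to a stationary solution, which by the minimal/maximal characterisation of item (2) must be $(\mathbf{F}_1^{F,\infty},\mathbf{F}_2^{F,\infty})$ and $(\mathbf{F}_1^{U,\infty},\mathbf{F}_2^{U,\infty})$ respectively; the claimed $\liminf/\limsup$ bounds then follow.

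The main obstacle is the construction in item (1): the half-space profiles of Lemma \ref{lem:mathbbF} end at the equilibria $F_i^{F*}$ or $F_i^{U*}$, which need not coincide between the two patches, so one has to engineer the extensions on the "wrong" side so that they (a) reach the prescribed limit $\max\{F_i^{U*},F_i^{F*}\}$, (b) remain monotone, and (c) satisfy the one-sided differential inequality together with a well-signed kink at $x=0$. A case split according to the sign of $F_i^{U*}-F_i^{F*}$, together with a suitably tuned exponential connecting profile when the two equilibria differ, should resolve this point; once item (1) is secure, items (2) and (3) follow by the monotone iteration and the comparison/monotonicity argument already tested in Theorem \ref{th:invasion1}.
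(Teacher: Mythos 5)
Your architecture is the same as the paper's: item 1 by gluing the half-space profiles of Lemma \ref{lem:mathbbF} to constants, with a connecting piece when the patch equilibria differ; item 2 by trapping stationary solutions between the ordered sub-/super-solution pair; item 3 by the comparison principle of Proposition \ref{prop:compar}. Two points, however, are genuine gaps rather than routine details. First, the connecting profile --- which you yourself flag as the main obstacle --- is left unresolved, and the kink rule you state is backwards: a sub-solution satisfies $-D F'' \le g$, i.e.\ $F'' \ge -g/D$ distributionally, so it must have \emph{convex} kinks (upward derivative jumps), while a super-solution must have \emph{concave} kinks; you wrote the opposite. The paper's resolution when, say, $F_1^{F*}>F_1^{U*}$ is not a tuned exponential: it solves the single-species stationary ODE $-D_1 y''=\rho\nu_1 b_1 y/(\tfrac{b_1}{K_1^U}y+\mu_1+\nu_1)-\delta_1 y$ in the $U$-patch backwards from the data $y=F_1^{U*}$, $y'=(\mathbb{F}_1^U)'(0)$, using convexity of solutions above $F_1^{U*}$ to reach the value $F_1^{F*}$ after a finite length $\alpha^U$. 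This piece is automatically a super-solution of \eqref{statF:1} (drop the competition term), it matches $\mathbb{F}_1^U$ in a $C^1$ way so no kink appears at the inner junction, and the only kink left is the correctly oriented concave one at $x=0$ against the constant $F_1^{F*}$. A generic exponential will not satisfy the differential inequality and the derivative matching simultaneously, and placing the connector in the wrong patch changes which carrying capacity (hence which equilibrium) governs the inequality.

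Second, in item 2 the limits at $\pm\infty$ of $\mathbf{F}_2^{F,\infty}$ and $\mathbf{F}_1^{U,\infty}$ do not follow from the sandwich bounds alone: the upper bound $\mathcal{F}_2^F(\cdot+x_0^F)$ tends to $\max\{F_2^{U*},F_2^{F*}\}$, which may strictly exceed $F_2^{U*}$, and the stationary solutions obtained in the limit are not known to be monotone in $x$, so they need not even admit limits a priori. The paper closes this with a convexity argument: wherever $\mathbf{F}_2^{F,\infty}>F_2^{U*}$ on $\{x>0\}$ the right-hand side of \eqref{statF:2} is nonpositive, so the profile is convex there; a positive slope at such a point would force unboundedness, whence either $\mathbf{F}_2^{F,\infty}\le F_2^{U*}$ everywhere or it is eventually non-increasing, and in both cases the limit at $+\infty$ is $F_2^{U*}$. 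Your appeal to ``the only compatible equilibria'' presupposes the limit exists; you need an argument of this kind to get it. The remaining parts of your plan (monotone iteration versus the paper's time-monotone parabolic evolution for item 2, and the comparison sandwich for item 3) are equivalent to what the paper does and are fine.
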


\textbf{Interpretation.} This result shows that, assuming that the criteria $\Gamma_1^F(F_1^{F*})>0$ and $\Gamma_2^U(F_2^{U*})>0$ holds and under some non-smallness assumption on the initial data, we have invasion of species $1$ in the region $\{x<0\}$ and invasion of the species $2$ in the region $\{x>0\}$.
As already mention after the statement of Theorem \ref{th:invasion1}, these criteria are not optimal. 
To illustrate the non-smallness condition, we provide a schematic representation of this condition in Figure \ref{fig:condini} in the situation where $F_1^{U*}<F_1^{F*}$ and $F_2^{F*}<F_2^{U*}$ (which is equivalent to $K_1^{U*}<K_1^{F*}$ and $K_2^{F*}<K_2^{U*}$).
    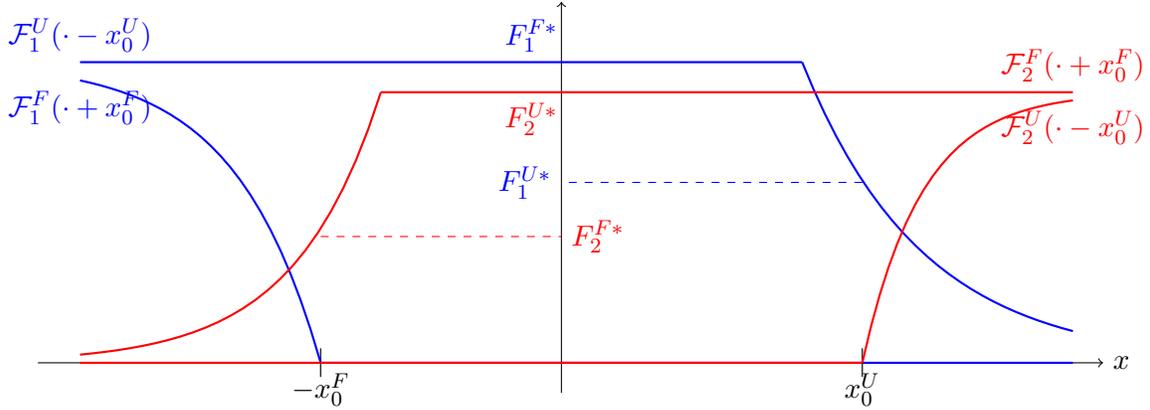
\begin{figure}[ht]
    \begin{center}
    \begin{tikzpicture}[scale=0.8]
    \draw[->] (-8.7,0) -- (9,0) node[right] {$x$};
    \draw[->] (0,-0.5) -- (0,6);
    \draw (-4,0) node {$|$} node[below] {$-x_0^F$};
    \draw[color=blue, thick ,domain=-4:-8] plot (\x,{5*(1-exp(0.7*(\x+4)))}) node[below] {$\mathcal{F}_1^F(\cdot+x_0^F)$};
    \draw[color=blue, thick, domain=-4:8.5] plot (\x,{0});
    \draw (5,0) node {$|$} node[below] {$x_0^U$};
    \draw[color=blue,thick,domain=4:-8] plot (\x,{5}) node[above] {$\mathcal{F}_1^U(\cdot-x_0^U)$};
    \draw[color=blue,thick,domain= 4:8.5] plot (\x,{5*exp(0.5*(4-\x))});
    \draw[color=blue] (-0.5,5) node[above] {$F_1^{F*}$};
    \draw[color=red,thick,domain=-3:8.5] plot (\x,{4.5}) node[above] {$\mathcal{F}_2^F(\cdot+x_0^F)$};
    \draw[color=red,thick,domain=-8:-3] plot (\x,{4.5*exp(0.7*(\x+3))});
    \draw[color=red] (-0.5,4.5) node[below] {$F_2^{U*}$};
    \draw[-,dashed,color=blue] (5,3.) -- (0,3.) node[left] {$F_1^{U*}$};
    \draw[color=red,thick,domain=-8:5] plot (\x,{0});
    \draw[color=red,thick,domain=5:8.5] plot (\x,{4.5*(1-exp(5-\x))}) node[below] {$\mathcal{F}_2^U(\cdot-x_0^U)$};
    \draw[-,dashed,color=red] (-4,2.1) -- (0,2.1) node[right] {$F_2^{F*}$};
    \end{tikzpicture} 
    \caption{Schematic representations of the conditions in Theorem \ref{TH2} on the initial data in the case where $F_1^{U*}<F_1^{F*}$ and $F_2^{F*}<F_2^{U*}$~: the initial conditions $F_1^0$ should be comprise between the two blue curves, the initial condition $F_2^0$ should be comprise between the two red curves.}\label{fig:condini}
    \end{center}
    \end{figure}

\begin{proof}
  The idea relies on a construction of sub-solution and super-solution as defined in Definition \ref{def:sub-super} and to use the comparison principle Proposition \ref{prop:compar}. To do so, we will use the stationary solutions $(\mathbb{F}_1^F,\mathbb{F}_2^F)$ and $(\mathbb{F}_1^U,\mathbb{F}_2^U)$ in Lemma \ref{lem:mathbbF}.

  Let us assume first that $F_1^{F*}>F_1^{U*}$. Then, there exists $\alpha^U>0$ such that there exists a non-increasing solution $\widetilde{\mathbb{F}_1^U}$ of
  \begin{align*}
    &- D_1 y'' = \frac{\rho\nu_1b_1 y}{\frac{b_1}{K_1^U} y + \mu_1 + \nu_1} - \delta_1 y,  \\
    & y(0) = F_1^{F*}, \quad y(\alpha^U) = F_1^{U*}, \quad y'(\alpha^U) = (\mathbb{F}_1^U)'(0).
  \end{align*}
  Indeed, let $b>0$ and $\widetilde{F_1}$ the solution of this differential equation with initial data $\widetilde{F_1}(b)=F_1^{U*}$, $\widetilde{F_1}'(b)=(\mathbb{F}_1^U)'(0)$. Then, on $(0,b)$, the function $\widetilde{F_1}$ is convex since on this interval we have $\widetilde{F_1}\geq F_1^{U*}$ and from the equation we deduce $-D_1 \widetilde{F_1} \geq 0$ when $\widetilde{F_1}\geq F_1^{U*}$.
  Then, for $b$ large enough, there exists $a\geq 0$ such that $\widetilde{F_1}(a)=F_1^{F*}$.
  Now, we take $\widetilde{F_1^U}(x) = F_1(x+a)$ and $\alpha^U=b-a$.

  Let us then define $\mathcal{F}_1^U$ and $\mathcal{F}_2^U$ in the following way~:
  \begin{itemize}
  \item If $F_1^{F*}\leq F_1^{U*}$,
    $$
    \mathcal{F}_1^U(x) =
    \left\{
      \begin{array}{ll}
        F_1^{U*} = \max\{F_1^{U*},F_1^{F*}\}, \quad & \text{ for } x<0,  \\[1mm]
        \mathbb{F}_1^U(x), \quad & \text{ for } x\geq 0,
      \end{array}
    \right.
    $$
    and
    $$
    \mathcal{F}_2^U(x) =
    \left\{
      \begin{array}{ll}
        0, \qquad & \text{ for } x<0,  \\[1mm]
        \mathbb{F}_2^U(x), \qquad & \text{ for } x\geq 0.
      \end{array}
    \right.
    $$
  \item If $F_1^{F*}> F_1^{U*}$,
    $$
    \mathcal{F}_1^U(x) =
    \left\{
      \begin{array}{ll}
        F_1^{F*} = \max\{F_1^{U*},F_1^{F*}\}, \quad & \text{ for } x<0,  \\[1mm]
        \widetilde{\mathbb{F}_1^U}(x), \quad & \text{ for } x\in [0,\alpha^U], \\[1mm]
        \mathbb{F}_1^U(x-\alpha^U), \quad & \text{ for } x> \alpha^U,
      \end{array}
    \right.
    $$
    and
    $$
    \mathcal{F}_2^U(x) =
    \left\{
      \begin{array}{ll}
        0, \qquad & \text{ for } x<\alpha^U,  \\[1mm]
        \mathbb{F}_2^U(x-\alpha^U), \qquad & \text{ for } x\geq \alpha^U.
      \end{array}
    \right.
    $$
  \end{itemize}

  We proceed in a similar way to construct $\mathcal{F}_1^F$ and $\mathcal{F}_2^F$.
  First, by the same token as above, if $F_2^{F*}<F_2^{U*}$, there exists $\alpha^F<0$ and a non-decreasing solution denoted $\widetilde{\mathbb{F}_2^F}$ of the equation
  \begin{align*}
    &- D_2 y'' = \frac{\rho\nu_2b_2 y}{\frac{b_2}{K_2^F} y + \mu_2 + \nu_2} - \delta_2 y,  \\
    & y(0) = F_2^{U*}, \quad y(\alpha^F) = F_2^{F*}, \quad y'(\alpha^F) = (\mathbb{F}_2^F)'(0).
  \end{align*}
  Then, we construct $\mathcal{F}_1^F$ and $\mathcal{F}_2^F$ in the following way~:
  \begin{itemize}
  \item If $F_2^{F*}\geq F_2^{U*}$,
    $$
    \mathcal{F}_1^F(x) =
    \left\{
      \begin{array}{ll}
        \mathbb{F}_1^F(x), \quad & \text{ for } x<0,  \\[1mm]
        0, \quad & \text{ for } x\geq 0,
      \end{array}
    \right.
    $$
    and
    $$
    \mathcal{F}_2^F(x) =
    \left\{
      \begin{array}{ll}
        \mathbb{F}_2^F(x), \qquad & \text{ for } x<0,  \\[1mm]
        F_2^{F*} = \max\{F_2^{U*},F_2^{F*}\}, \qquad & \text{ for } x\geq 0.
      \end{array}
    \right.
    $$
  \item If $F_2^{F*}< F_2^{U*}$,
    $$
    \mathcal{F}_1^F(x) =
    \left\{
      \begin{array}{ll}
        \mathbb{F}_1^F(x-\alpha^F), \qquad & \text{ for } x < \alpha^F,  \\[1mm]
        0, \qquad & \text{ for } x \geq \alpha^F,
      \end{array}
    \right.
    $$
    and
    $$
    \mathcal{F}_2^F(x) =
    \left\{
      \begin{array}{ll}
        \mathbb{F}_2^F(x-\alpha^F), \quad & \text{ for } x < \alpha^F,  \\[1mm]
        \widetilde{\mathbb{F}_2^F}(x), \quad & \text{ for } x\in [\alpha^F,0], \\[1mm]
        F_2^{U*} = \max\{F_2^{U*},F_2^{F*}\}, \quad & \text{ for } x>0.
      \end{array}
    \right.
    $$
  \end{itemize}

  By construction, $(\mathcal{F}_1^F,\mathcal{F}_2^U)$ and $(\mathcal{F}_1^U,\mathcal{F}_2^F)$ are respectively sub-solution and super-solution for the stationary system \eqref{statF}.
  Indeed, this is a consequence of the facts that~: Firstly, $(\mathbb{F}_1^F,\mathbb{F}_2^F)$ and $\mathbb{F}_1^U,\mathbb{F}_2^U)$ are stationary solution respectively on $(-\infty,0)$ and $(0,+\infty)$. Secondly, $0$ is a sub-solution for the stationary system whereas $\max\{F_1^{U*},F_1^{F*}\}$, respectively $\max\{F_2^{U*},F_2^{F*}\}$, is a super-solution for \eqref{statF:1}, respectively \eqref{statF:2}. Thirdly, $\mathcal{F}_1^U$ and $\mathcal{F}_1^U$ are non-increasing, $\mathcal{F}_2^U$ and $\mathcal{F}_2^U$ are non-decreasing.
  We deduce that for all $x_0^F>0$ and $x_0^U>0$, we have that $x\mapsto (\mathcal{W}(\mathcal{F}_1^F,\mathcal{F}_2^F)(x+x_0^F), \mathcal{F}_1^F(x+x_0^F),\mathcal{F}_2^U(x-x_0^U))$ and $x\mapsto (\mathcal{W}(\mathcal{F}_1^U,\mathcal{F}_2^U)(x-x_0^U),\mathcal{F}_1^U(x-x_0^U),\mathcal{F}_2^F(x+x_0^F))$ is respectively a sub-solution and super-solution of \eqref{reduced} in the sense of Definition \ref{def:sub-super}. It concludes the proof of the point 1.

  \smallskip

  Let us take $x_0^F>0$ and $x_0^U>0$ such that
  $$
  \mathcal{F}_1^F(\cdot+x_0^F) \leq \mathcal{F}_1^{U}(\cdot-x_0^U), \quad
  \mathcal{F}_2^U(\cdot-x_0^U) \leq \mathcal{F}_2^F(\cdot+x_0^F),
  $$
  such $x_0^F>0$ and $x_0^U>0$ always exist since $\mathcal{F}_1^F\leq \mathcal{F}_1^U$ and are non-increasing, and $\mathcal{F}_2^U\leq \mathcal{F}_2^F$ and are non-decreasing (see Figure \ref{fig:condini} for an illustration).
  We denote $(\mathbf{W}^F,\mathbf{F}_1^F,\mathbf{F}_2^U)$, respectively $(\mathbf{W}^U,\mathbf{F}_1^U,\mathbf{F}_2^F)$, the solution to \eqref{reduced} with initial data $(\mathcal{W}(\mathcal{F}_1^F,\mathcal{F}_2^F)(\cdot+x_0^F), \mathcal{F}_1^F(\cdot+x_0^F),\mathcal{F}_2^U(\cdot-x_0^U))$, respectively $(\mathcal{W}(\mathcal{F}_1^U,\mathcal{F}_2^U)(\cdot-x_0^U),\mathcal{F}_1^U(\cdot-x_0^U),\mathcal{F}_2^F(\cdot+x_0^F))$.
  By the same token as in the proof of Theorem \ref{th:invasion1}, we get that $t\mapsto (\mathbf{W}^F,\mathbf{F}_1^F,\mathbf{F}_2^U)$ is non-decreasing and $t\mapsto (\mathbf{W}^U,\mathbf{F}_1^U,\mathbf{F}_2^F)$ is non-increasing. By comparison principle (see Proposition \ref{prop:compar}), we also have for all $t>0$ and $x\in\RR$,
  $$
  \mathcal{F}_1^F(\cdot+x_0^F) \leq \mathbf{F}_1^F(t,x)\leq \mathbf{F}_1^U(t,x)\leq \mathcal{F}_1^{U}(\cdot-x_0^U)
  $$
  and
  $$
  \mathcal{F}_2^U(\cdot-x_0^U) \leq \mathbf{F}_2^U(t,x) \leq \mathbf{F}_2^F(t,x)\leq \mathcal{F}_2^F(\cdot+x_0^F).
  $$
  Then, as $t$ goes to infinity, $t\mapsto (\mathbf{W}^F,\mathbf{F}_1^F,\mathbf{F}_2^U)$ converges towards some stationary solutions denoted $(\mathbf{W}^{F,\infty},\mathbf{F}_1^{F,\infty},\mathbf{F}_2^{U,\infty})$, and similarly $t\mapsto (\mathbf{W}^U,\mathbf{F}_1^U,\mathbf{F}_2^F)$ converges towards a stationary solution $(\mathbf{W}^{U,\infty},\mathbf{F}_1^{U,\infty},\mathbf{F}_2^{F,\infty})$. Moreover, we have by passing into the limit in the above inequalities
  \begin{align}
    & \mathcal{F}_1^F(\cdot+x_0^F) \leq \mathbf{F}_1^{F,\infty}(x) \leq \mathbf{F}_1^{U,\infty}(x) \leq \mathcal{F}_1^{U}(\cdot-x_0^U), \label{estim1inf}\\
    & \mathcal{F}_2^U(\cdot-x_0^U) \leq \mathbf{F}_2^{U,\infty}(x) \leq \mathbf{F}_2^{F,\infty}(x) \leq \mathcal{F}_2^F(\cdot+x_0^F). \label{estim2inf}
  \end{align}
  Hence, $\lim_{x\to +\infty} F_1^{F,\infty} = 0$, $\lim_{x\to +\infty} F_1^{U,\infty} = 0$, $\lim_{x\to -\infty} F_2^{F,\infty} = 0$, and  $\lim_{x\to -\infty} F_2^{U,\infty} = 0$.
  Let us assume that there exists $\xi>0$ such that $\mathbf{F}_2^{F,\infty}(\xi)>F_2^{U*}$. Then, from \eqref{statF:2}
  \begin{align*}
    -D_2 \partial_{xx}\mathbf{F}_2^{F,\infty}(\xi) \leq
    & \frac{\rho\nu_2 b_2 \mathbf{F}_2^{F,\infty}(\xi)}{\frac{b_2}{K_2^U} \mathbf{F}_2^{F,\infty}(\xi) + \mu_2 + \nu_2} - \delta_2 \mathbf{F}_2^{F,\infty}(\xi)  \\
    & = \rho\nu_2 b_2 \mathbf{F}_2^{F,\infty}(\xi) \left(\frac{1}{\frac{b_2}{K_2^U} \mathbf{F}_2^{F,\infty}(\xi) + \mu_2 + \nu_2} - \frac{1}{\frac{b_2}{K_2^U} F_2^{U*} + \mu_2 + \nu_2}\right) \leq 0.
  \end{align*}
  Therefore, $\mathbf{F}_2^{F,\infty}$ is convex as soon as $\mathbf{F}_2^{F,\infty}>F_2^{U*}$, then, if $\partial_x \mathbf{F}_2^{F,\infty}(\xi)>0$, we deduce that $\mathbf{F}_2^{F,\infty}$ is increasing on $(\xi,+\infty)$, thus cannot be bounded. It is a contradiction. Therefore, either $\mathbf{F}_2^{F,\infty}\leq F_2^{U*}$ which implies with \eqref{estim2inf} that $\lim_{x\to +\infty} \mathbf{F}_2^{F,\infty}(x)=F_2^{U*}$, or $x\mapsto \mathbf{F}_2^{F,\infty}(x)$ is non-increasing on $(0,+\infty)$ hence it admits a limit as $x$ goes to $+\infty$ which is a equilibrium for \eqref{statF:2}, the only possibility is $F_2^{U*}$.
  By the same token, we show that $\lim_{x\to - \infty} \mathbf{F}_1^{U,\infty}(x) = F_1^{F*}$.
  The proof of the point 2 is done.
  
  \smallskip

  The point 3 is a consequence of the comparison principle (see Proposition \ref{prop:compar}). Indeed, we deduce from this comparison principle that if \eqref{th:coninit} holds, then for any $t>0$ and $x\in \RR$, we have
  $$
  \mathbf{F}_1^F(t,x) \leq F_1(t,x) \leq \mathbf{F}_1^U(t,x), \quad
  \mathbf{F}_2^U(t,x) \leq F_2(t,x) \leq \mathbf{F}_2^F(t,x).
  $$
  We conclude by taking the limit as $t$ goes to $+\infty$.
\end{proof}
    
\subsection{Numerical illustration}\label{sec:heterogene_num}

Finally, we illustrate our theoretical results in Theorem \ref{TH2} with a numerical simulation. We use the numerical values in Table \ref{table1} for the mosquitoes populations and the same numerical discretization of system \ref{syst} as in Section \ref{sec:homogene_num}.
We recall that $K_i(x) = K_i^F \mathbf{1}_{\{x<0\}} + K_i^U \mathbf{1}_{\{x\geq 0\}}$ and we choose
$$
K_1^F = 2000 \text{km}^{-2}, \quad K_1^U = 300 \text{km}^{-2}, \quad
K_2^F = 500 \text{km}^{-2}, \quad K_2^U = 2200 \text{km}^{-2}.
$$
With these numerical values, we compute $F_1^{F*} = 1209$ km$^{-2}$, $F_2^{U*} = 745.25$ km$^{-2}$, $\Gamma_1^F(F_1^{F*}) \simeq 14535 > 0$, and $\Gamma_2^U(F_2^{U*}) \simeq 368.3 >0 $. The initial data are defined by $E_1^0(x) = F_1^0(x) = 1000 \,\mathbf{1}_{\{x<-15\}}$ and $E_2^0(x) = F_2^0(x) = 700 \,\mathbf{1}_{\{x>15\}}$. 
We display in Figure \ref{fig:heterogene} the numerical results obtained.
We observe that species $1$ invades the domain $\{x<0\}$ and species $2$ invades the domain $\{x>0\}$. Then each species remains in its habitat and a segregation occurs. 
\begin{figure}[h!]
\centering\includegraphics[width=0.49\linewidth,height=5.7cm]{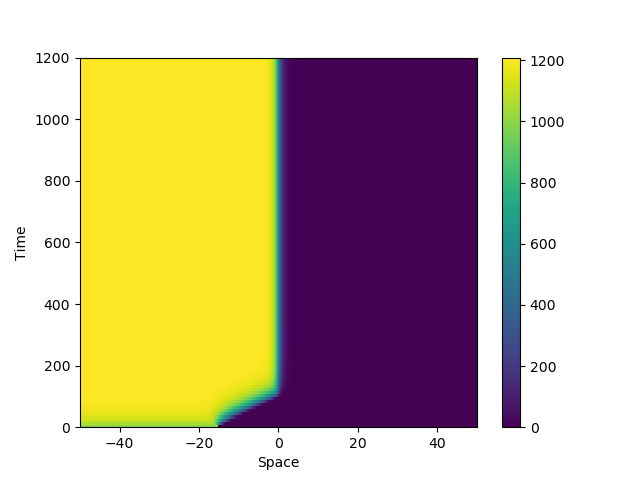} 
\includegraphics[width=0.49\linewidth,height=5.7cm]{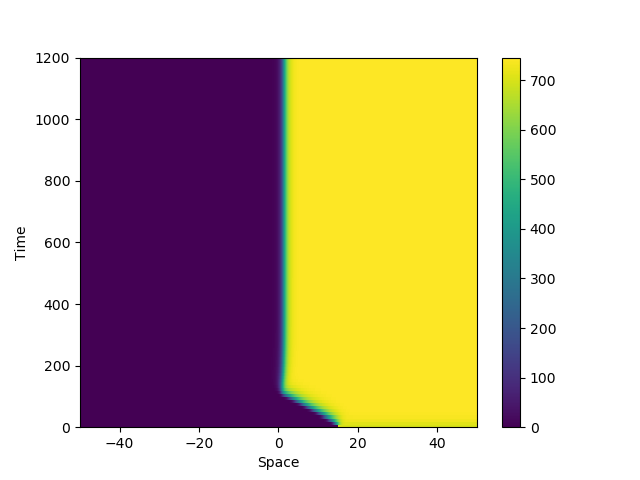}
\caption{Time dynamics of functions $F_1$ (left) and $F_2$ (right) when the condition $\Gamma_1^F(F_1^*)>0$ is verified. Yellow color corresponds to high density, blue color to low density. In this case, species $1$ invades the whole domain and pushes out species $2$. It illustrates the invasion result stated in Theorem \ref{TH2}.}
\label{fig:heterogene}
\end{figure}

\section{Conclusion}

Based on the observation that \textit{Aedes} mosquitoes populations are segregated in different habitats, we have proposed a mathematical model for two populations of mosquitoes with competition at larval stage. The model consists of a competitive system of reaction-diffusion equations. Assuming a high degree of competition, we first reduce the system. For this reduced system, we have shown that under certain conditions on the parameters and assuming sufficiently large initial data, each population settles in the habitat that is most favorable to it. This model therefore reproduces observations well. This result extends the one proposed in \cite{samia} where a simplified scalar reaction-diffusion system is investigated.

From a mathematical point of view the strong competition asymptotic allowed us to reduce the dimensionality of the problem. Then a natural perspective is to investigate the full system \eqref{syst}. General conditions to guarantee invasion of one species in a competitive system of reaction-diffusion equation are, up to our knowledge, not known. Then, the first question is to study whether the technique of construction of sub-solutions and super-solutions developed in this paper could be applied to obtain sufficient conditions on the parameters to know which population will invade the other.

Another perspective for practical applications is to understand more carefully the dynamics of the competition between mosquitoes at larval stage. In model \eqref{syst} the competition is modelled in a simple manner. But, it surely depends on the size of the larvae and on the resources available. Thus, the mathematical model should be complexified to have a better description of this competition. Obviously the mathematical analysis of the resulting system will be more challenging.

\newpage

\appendix
 
\section{Strong competition asymptotic}

In this appendix, we justify rigorously the reduction of system \eqref{syst} towards system \eqref{eq:w}--\eqref{systF} when the competition is assumed to be very large, following the ideas developed in \cite{Dancer1,Girardin,Perthame,samia}.
Let us set $c=\frac{1}{\eps}$ and we let $\eps\to 0$ in system \ref{syst}. Then it reads
\begin{subequations}\label{systeps}
\begin{align}\label{systeps:E1}
&\partial_t E_{1,\eps} = b_1 F_{1,\eps} \left(1 - \frac{E_{1,\eps}}{K_1}\right) - \frac{1}{\eps} E_{1,\eps} E_{2,\eps} - (\mu_1+\nu_1) E_{1,\eps} \\
\label{systeps:F1}
&\partial_t F_{1,\eps} - D_1 \partial_{xx} F_{1,\eps} = \rho \nu_1 E_{1,\eps} - \delta_1 F_{1,\eps}  \\
\label{systeps:E2}
&\partial_t E_{2,\eps} = b_2 F_{2,\eps} \left(1 - \frac{E_{2,\eps}}{K_2}\right) - \frac{1}{\eps} E_{1,\eps} E_{2,\eps} - (\mu_2+\nu_2) E_{2,\eps} \\
\label{systeps:F2}
&\partial_t F_{2,\eps} - D_2 \partial_{xx} F_{2,\eps} = \rho \nu_2 E_{2,\eps} - \delta_2 F_{2,\eps}.
\end{align}
\end{subequations}
This system is complemented with non-negative initial conditions $E_{1,\eps}^0, F_{1,\eps}^0, E_{2,\eps}^0, F_{2,\eps}^0$.
We assume that these initial data are non-negative, continuous on $\RR$ and uniformly bounded in $W^{1,1}(\mathbb{R})\cap L^\infty(\mathbb{R})$~:
\begin{align} \label{54}
\left\|E_{1,\varepsilon}^0\right\|_{L^1\left(\mathbb{R}\right)}+\left\|E_{1,\varepsilon}^0\right\|_{L^{\infty}\left(\mathbb{R}\right)}+\left\|E_{2,\varepsilon}^0\right\|_{L^1 \left(\mathbb{R}\right)}+\left\|E_{2,\varepsilon}^0\right\|_{L^{\infty}\left(\mathbb{R}\right)} \leq C^0,
\end{align}
\begin{align} \label{53}
\left\|\partial_x E_{1,\varepsilon}^0\right\|_{L^1\left(\mathbb{R} \right)}+\left\|\partial_x E_{2,\varepsilon}^0\right\|_{L^1\left(\mathbb{R} \right)} \leq C^1. 
\end{align}
From these assumptions, we may extract a subsequence still denoted by ($E_{1,\varepsilon}^0, E_{2,\varepsilon}^0)$ of initial data that convergences strongly in $L^1(\mathbb{R}) \times L^1(\mathbb{R})$. We assume moreover that the limiting initial data are segregated, i.e.
\begin{equation}\label{assumpini}
E_{1,\varepsilon}^0 \rightarrow E_1^0, \quad E_{2,\varepsilon}^0 \rightarrow E_2^0 \quad \text {  in } L^1\left(\mathbb{R}\right), \text{ and }
E_1^0 E_2^0 = 0 .
\end{equation}
The carrying capacities is assumed to be such that, for $i=1,2$,
\begin{equation}\label{hyp:K1K2}
0<K_i \in L^\infty(\RR), \qquad \frac{1}{K_i} \in L^\infty(\RR) \cap BV(\RR).
\end{equation}
In this Appendix, we prove the following result~:
\begin{theorem}\label{th:reduc}
Let $T>0$ and assume that \eqref{hyp:K1K2} holds. Let us consider system \eqref{systeps} with assumptions \eqref{54}-\eqref{assumpini} on the initial data. Then, as $\varepsilon\to 0$, we have for $i=1,2$,
$$
E_{i,\varepsilon} \rightarrow E_i, \quad F_{i,\varepsilon} \rightarrow F_i \quad \text { strongly in } L^1\left([0,T], L^1(\mathbb{R})\right),
$$
where $E_1, E_2,F_1,F_2 \in L^{\infty}\left(\mathbb{R}^{+} ; L^1 \cap L^{\infty}\left(\mathbb{R} \right)\right)$,  $E_1 E_2(t, x)=0$  a.e. and $w =E_1-E_2$ solves 
\begin{equation}\label{eqap:w}
\partial_t w = b_1 F_1 \left(1-\dfrac{w_+}{K_1(x)}\right)-(\mu_1+\nu_1) w_+ - b_2 F_2 \left(1-\dfrac{w_-}{K_2(x)}\right) + (\mu_2+\nu_2) \omega_-,
\end{equation}
and $F_1$ and $F_2$ satisfy
\begin{subequations}\label{app:systF}
\begin{align}\label{app:systF:1}
&\partial_t F_1 - D_1 \Delta F_1 = \rho \nu_1 w_+ - \delta_1 F_1  \\
\label{app:systF:2}
&\partial_t F_2 - D_2 \Delta F_2 = \rho \nu_2 w_- - \delta_2 F_2.
\end{align}
\end{subequations}
In this system, we use the standard notations for the positive and negative parts, i.e. $u_+ = \max\{0,u\}$, and $u_- = \max\{0,-u\}$.
This equation is complemented with initial data
$$
w(t=0) = w^0(x) := E_{1}^0(x) -  E_{2}^0(x), \qquad
F_1(t=0) = F_1^0, \qquad F_2(t=0) = F_2^0.
$$
\end{theorem}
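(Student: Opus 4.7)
\textbf{Proof plan for Theorem \ref{th:reduc}.}

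The plan is to follow the strategy developed in \cite{Dancer1,Girardin,samia}: obtain a priori estimates uniform in $\varepsilon$, extract compactness, pass to the limit, and identify the limit equations. The central quantity to control is the competition term $\frac{1}{\varepsilon}E_{1,\varepsilon}E_{2,\varepsilon}$, which is large in $L^\infty$ but must be shown to be bounded in $L^1$.

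\emph{Step 1: a priori estimates.} The $L^\infty$ bounds on $(E_{i,\varepsilon},F_{i,\varepsilon})$ uniform in $\varepsilon$ follow from the invariant-set argument already given for \eqref{syst} (the constants $\|K_i\|_\infty$ and $\rho\nu_i\|K_i\|_\infty/\delta_i$ do not depend on $c=1/\varepsilon$). Integrating \eqref{systeps:E1} in $x$, using \eqref{54} and the non-negativity, and applying a Gronwall argument yields a uniform $L^1$ bound on $E_{i,\varepsilon}(t,\cdot)$ on $[0,T]$, and then
\begin{equation*}
\frac{1}{\varepsilon}\int_0^T\!\!\int_\RR E_{1,\varepsilon}E_{2,\varepsilon}\,dx\,dt \le C_T,
\end{equation*}
by isolating the competition term in either ODE for $E_{i,\varepsilon}$. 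Parabolic regularity for \eqref{systeps:F1}--\eqref{systeps:F2} then gives uniform bounds on $F_{i,\varepsilon}$ in $L^\infty([0,T];W^{1,p}(\RR))$ for any $p<\infty$.

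\emph{Step 2: compactness of $E_{i,\varepsilon}$.} Space compactness comes from BV: differentiating \eqref{systeps:E1} in $x$, one obtains an equation for $\partial_x E_{1,\varepsilon}$ whose bad terms are $\frac{1}{\varepsilon}E_{1,\varepsilon}\partial_x E_{2,\varepsilon}$ and those coming from $\partial_x(1/K_1)\in L^\infty\cap BV$. Adding the analogous equation for $|\partial_x E_{2,\varepsilon}|$ (using Kato's inequality to work with absolute values) the cross terms $\frac{1}{\varepsilon}E_{1,\varepsilon}|\partial_x E_{2,\varepsilon}|+\frac{1}{\varepsilon}E_{2,\varepsilon}|\partial_x E_{1,\varepsilon}|$ combine with the right sign, and integration in $x$ plus Gronwall using \eqref{53} yield a uniform $L^\infty(0,T;L^1(\RR))$ bound on $\partial_x E_{i,\varepsilon}$. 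For time compactness, rewrite the ODE as $\partial_t E_{i,\varepsilon}=G_{i,\varepsilon}$ with $G_{i,\varepsilon}$ uniformly bounded in $L^1_{t,x}$ (using Step~1). This gives an equi-continuity estimate $\|E_{i,\varepsilon}(t+h,\cdot)-E_{i,\varepsilon}(t,\cdot)\|_{L^1}\le Ch$ via integration of the equation, hence a Kolmogorov-Riesz type compactness argument (or Aubin--Lions--Simon applied to $L^1\hookrightarrow BV\hookrightarrow L^1$) produces a subsequence with $E_{i,\varepsilon}\to E_i$ strongly in $L^1((0,T)\times\RR)$. The parabolic smoothing for $F_{i,\varepsilon}$ gives strong convergence $F_{i,\varepsilon}\to F_i$ in $L^1((0,T);L^1_{\mathrm{loc}}(\RR))$, improved to global $L^1$ using the $L^1$ tails inherited from $E_{i,\varepsilon}$ via \eqref{systeps:F1}--\eqref{systeps:F2}.

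\emph{Step 3: segregation and limit equations.} The $L^1$ bound on $\frac{1}{\varepsilon}E_{1,\varepsilon}E_{2,\varepsilon}$ combined with $L^\infty$ bounds and strong convergence yields $\int_0^T\!\!\int_\RR E_1 E_2\,dx\,dt=0$, hence $E_1E_2=0$ a.e., so that $w:=E_1-E_2$ satisfies $E_1=w_+$, $E_2=w_-$. Subtracting \eqref{systeps:E2} from \eqref{systeps:E1} cancels the singular competition term, yielding
\begin{equation*}
\partial_t(E_{1,\varepsilon}-E_{2,\varepsilon}) = b_1 F_{1,\varepsilon}\!\left(1-\tfrac{E_{1,\varepsilon}}{K_1}\right)-(\mu_1{+}\nu_1)E_{1,\varepsilon}-b_2F_{2,\varepsilon}\!\left(1-\tfrac{E_{2,\varepsilon}}{K_2}\right)+(\mu_2{+}\nu_2)E_{2,\varepsilon},
\end{equation*}
and passing to the limit with the strong convergences gives \eqref{eqap:w} after substituting $E_1=w_+$, $E_2=w_-$. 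The $F$-equations \eqref{app:systF} are obtained by direct passage to the limit in \eqref{systeps:F1}--\eqref{systeps:F2}. Initial data pass to the limit thanks to \eqref{assumpini}.

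\emph{Main obstacle and uniqueness.} The key difficulty is the lack of diffusion in the $E_i$-equations, which forces the BV approach for spatial compactness; the delicate point is that the competition terms are non-smooth and large, and one must verify that after combining the two BV estimates via Kato's inequality the sign of the cross terms is favourable. Once the limit $(w,F_1,F_2)$ is identified as a solution of \eqref{eqap:w}--\eqref{app:systF} with the correct initial data, the convergence of the whole family (not just subsequences) will follow from uniqueness of the limiting system, which can be established by a standard $L^1$-contraction argument adapted from \cite{samia}: taking the difference of two solutions, using Kato's inequality for the $w$-equation (since the right-hand side is monotone in the appropriate sense in $w$) and the $L^1$-maximum principle for the parabolic $F_i$-equations, one closes a Gronwall estimate on $\|w^1-w^2\|_{L^1}+\|F_1^1-F_1^2\|_{L^1}+\|F_2^1-F_2^2\|_{L^1}$.
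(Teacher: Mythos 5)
Your plan matches the paper's appendix proof essentially step for step: the same $L^\infty$/$L^1$ a priori bounds with the $O(1)$ control of $\frac{1}{\eps}\int E_{1,\eps}E_{2,\eps}$, the same $W^{1,1}$ estimate obtained by differentiating in $x$, summing the two species so that the singular cross terms combine with a favourable sign, and using Kato's inequality for the $F_i$-equations, followed by Aubin--Lions compactness with tail control, segregation in the limit, and cancellation of the competition term by subtracting the two $E$-equations. The only addition is your closing remark on uniqueness of the limit system (which the paper does not address, working only up to subsequences), and your explicit mention of the terms generated by $\partial_x(1/K_i)\in BV$; both are correct and consistent with the paper's argument.
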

\begin{proof}
  We do not give all details of the proof but explain how the proof in \cite{samia} can be adapted to the system studied in this paper.
The proof is split into several steps.

\textbf{ Step 1: Uniform a priori estimates  } \\
From standard results, solutions to system \eqref{systeps} are non-negative and uniformly bounded in $L^\infty(\RR^+,L^\infty(\RR))$. Then, integrating system \eqref{systeps}, we obtain, for $i=1,2$,
\begin{align*}
 \left\{\begin{array}{l}
\ds\frac{d}{dt}\int_{\mathbb{R}} E_{i,\varepsilon}(t, x) d x + \int_{\mathbb{R}} \frac{E_{1,\varepsilon}(t, x) E_{2,\varepsilon}(t, x)}{\varepsilon} \,dx  \\[3mm]
\qquad +  \ds \int_{\mathbb{R}} \big( b_i \frac{E_{i,\varepsilon}(t, x) F_{i,\eps}(t,x) }{K_i(x)} + (\mu_i+\nu_i) E_{i,\varepsilon}(t,x) \big)\,dxds 
\leq  \ds\int_{\mathbb{R}} b_i F_{i,\varepsilon}(t,x) d x, \\[4mm] 
\ds\frac{d}{dt} \int_{\mathbb{R}} F_{i,\varepsilon}(t, x) d x + \int_{\mathbb{R}} \delta_i F_{i,\eps}(t,x)\,dx  \leq \int_{\RR} \rho \nu_i E_{i,\eps}(t,x)\,dx.
\end{array}\right.
\end{align*}
Adding the two inequalities and applying a Gronwall lemma, we deduce the uniform bounds in $L^\infty([0,T],L^1(\RR))$ uniform with respect to $\eps$. Moreover, we also have the uniform bound 
\begin{equation}\label{estim1}
\int_0^T \int_\RR \frac{E_{1,\eps}(t,x) E_{2,\eps}(t,x)}{\eps} \,dxdt \leq C.
\end{equation}
By parabolic regularity, we get that $F_{i,\eps}\in L^\infty_{loc}(\RR^+,C^1(\RR))\cap L^\infty_{loc}(\RR^+,W^{1,1}(\RR))$, for $i=1,2$, hence we deduce that $E_{i,\eps} \in L^\infty_{loc}(\RR^+,C^1(\RR))\cap L^\infty_{loc}(\RR^+,W^{1,1}(\RR))$, for $i=1,2$.

Next, differentiating system \eqref{systeps} with respect to $x$ and multiplying be the sign function, we get, for $i=1,2$,
\begin{align*}
  \partial_t |\partial_x E_{i,\eps}| =
  & b_i \partial_x F_{i,\eps} \left(1-\frac{E_{i,\eps}}{K_i}\right) \operatorname{sgn}(\partial_x E_{i,\eps}) - \frac{1}{\eps} \left(\partial_x E_{1,\eps} E_{2,\eps} + E_{1,\eps} \partial_x E_{2,\eps}\right)\operatorname{sgn}(\partial_x E_{i,\eps})  \\
  & - \left(\frac{b_i}{K_i} F_i + \mu_i + \nu_i\right) |\partial_x E_i|. 
\end{align*}
Moreover, we have
\begin{align*}
& \Big(\partial_x E_{1,\varepsilon} E_{2,\varepsilon}+ E_{1,\varepsilon} \partial_x E_{2,\varepsilon}\Big)\Big(\operatorname{sgn}(\partial_x E_{1,\varepsilon})+\operatorname{sgn}(\partial_x E_{2,\varepsilon})\Big)  \\
& =  \Big(\left|\partial_x E_{1,\varepsilon}\right| + \partial_x E_{1,\varepsilon} \operatorname{sgn}(\partial_x E_{2,\varepsilon})\Big) E_{2,\varepsilon}
+\Big(|\partial_x E_{2,\varepsilon}| + \partial_x E_{2,\varepsilon} \operatorname{sgn}(\partial_x E_{1,\varepsilon})\Big) E_{1,\varepsilon} \geq  0.
\end{align*}
Injecting into the latter inequality after adding the equations for $i=1$ and $i=2$, we obtain
\begin{equation}\label{eq:dxEi}
\partial_t \Big(|\partial_x E_{1,\eps}|+|\partial_x E_{2,\eps}|\Big) \leq 
\sum_{i=1}^2 b_i |\partial_x F_{i,\eps}| \left|1-\frac{E_{i,\eps}}{K_i}\right| \leq C \Big(|\partial_x F_{1,\eps}|+|\partial_x F_{2,\eps}|\Big),
\end{equation}
where we use the uniform bound of $E_{i,\eps}$ in $L^\infty$ in the last inequality.

Le us recall the Kato's inequality~: $\Delta |f| \geq \operatorname{sgn}(f) \Delta f$ in the sense of distribution for any integrable function $f$ whose Laplacian is integrable.
Then, differentiating equations \eqref{systeps:F1} and \eqref{systeps:F2} with respect to $x$ and multiplying with the sign function, we obtain, for $i=1,2$,
$$
\partial_t F_{i,\eps} - D_i \Delta |\partial_x F_i| \leq \rho \nu_i |\partial_x E_i| - \delta_i |\partial_x F_i|.
$$
Integrating and adding equation \eqref{eq:dxEi} after integrating it, we get, for some positive constant $C$,
$$
\frac{d}{dt} \int_\RR \Big(|\partial_x E_{1,\eps}| + |\partial_x E_{2,\eps}| + |\partial_x F_{1,\eps}| + |\partial_x F_{2,\eps}|\Big)\,dx \leq
C \int_\RR \Big(|\partial_x E_{1,\eps}| + |\partial_x E_{2,\eps}| + |\partial_x F_{1,\eps}| + |\partial_x F_{2,\eps}|\Big)\,dx.
$$
We conclude by a Gronwall argument that the quantities $E_{1,\eps}$, $E_{2,\eps}$, $F_{1,\eps}$, $F_{2,\eps}$ are uniformly bounded in $L^1([0,T],W^{1,1}(\RR))$. \\

\textbf{ Step 2: Compactness and passing to the limit} \\
This step follows closely the proof in \cite{samia}. First, using the Aubin-Lions lemma and the compact injection of $W^{1,1}(\Omega)$ into $L^1(\Omega)$ for $\Omega$ an open bounded domain, we obtain the local strong compactness of the sequences $(E_{1,\eps})\eps$ and $(E_{2,\eps})_\eps$.
To pass from local to global convergence, it remains to prove that the mass in the tail is uniformly small with respect to $\eps$, which follows the same argument as in \cite{samia}.
It is classical to deduce from the linear equations \eqref{systeps:F1} and \eqref{systeps:F2} that the sequences $(F_{1,\eps})$ and $(F_{2,\eps})$ also converge strongly.
As a consequence, up to an extraction, we have strong convergence towards limit denoted respectively $E_1$, $E_2$, $F_1$, $F_2$.

To pass to the limit, we first observe that from the estimate \eqref{estim1} that when $\eps\to 0$, the limit verifies the segregation properties $E_1 E_2 = 0$ for a.e. $t>0$ and $x$ and also for $t=0$ thanks to assumption \eqref{assumpini}.
Next, we define $w_\eps = E_{1,\eps}-E_{2_\eps} \to w := E_1 - E_2$. From the segregation property, we deduce $w_+ = E_1$ and $w_- = E_2$.
After subtracting equations \eqref{systeps:E1} and \eqref{systeps:E2} and passing to the limit, we recover \eqref{eqap:w}. It remains to pass into the limit in the linear equations \eqref{systeps:F1} and \eqref{systeps:F2}.
\end{proof}

\section{Comparison principle}

In this section, we prove the comparison principle stated in Proposition \ref{prop:compar}.
  We use the Stampacchia method, in the spirit of \cite{Perthame}.
  Denoting $\delta w = \underline{w} - \overline{w}$, $\delta F_1 = \underline{F_1}-\overline{F_1}$, $\delta F_2 = \underline{F_2} - \overline{F_2}$, we have from \eqref{def:sub} and \eqref{def:super}
  \begin{align*}
    \partial_t (\delta w) =
    & b_1 (\delta F_1) \left(1-\frac{(\underline{w})_+}{K_1}\right) - \left(\frac{b_1 \overline{F_1}}{K_1}+\mu_1+\nu_1\right) ((\underline{w})_+-(\overline{w})_+) \\
    & - b_2 (\delta F_2) \left( 1-\frac{(\underline{w})_-}{K_2} \right) + \left( \frac{b_2 \overline{F_2} }{K_2} + \mu_2 + \nu_2 \right) ((\underline{w})_- - (\overline{w})_-).
  \end{align*}
  We have the obvious inequalities
  $$
  ((\underline{w})_+ - (\overline{w})_+) (\underline{w} - \overline{w})_+ \geq 0, \qquad ((\underline{w})_- - (\overline{w})_-) (\underline{w} - \overline{w})_+ \leq 0.
  $$
  Then, multiplying the latter equation with $(\delta w)_+$, we obtain
  \begin{align*}
    \frac 12 \partial_t (\delta w)_+^2 \leq\ 
    & b_1 (\delta F_1) (\delta w)_+ \left(1-\frac{(\underline{w})_+}{K_1}\right) - b_2 (\delta F_2) (\delta w)_+ \left( 1-\frac{(\underline{w})_-}{K_2} \right)  \\
    \leq\ & b_1 (\delta F_1)_+ (\delta w)_+ + b_2 (\delta F_2)_- (\delta w)_+ \\
    \leq\ & \frac{b_1+b_2}{2} (\delta w)_+^2 + \frac{b_1}{2} (\delta F_1)_+^2 + \frac{b_2}{2} (\delta F_2)_-^2.
  \end{align*}

  For the equation for $F_1$, we have
  \begin{align*}
    \frac 12 \partial_t (\delta F_1)_+^2 - D_1 \partial_{xx} (\delta F_1) (\delta F_1)_+
    & = \rho \nu_1 ((\underline{w})_+ - (\overline{w})_+) (\delta F_1)_+ - \delta_1 (\delta F_1)_+^2  \\
    & \leq \rho \nu_1 (\delta w)_+ (\delta F_1)_+  \leq \frac{\rho\nu_1}{2} ((\delta w)_+^2 + (\delta F_1)_+^2),
  \end{align*}
  where we used the inequality $(\underline{w})_+ - (\overline{w})_+ \leq (\underline{w}-\overline{w})_+$.
  Similarly, we have
  \begin{align*}
    \frac 12 \partial_t (\delta F_2)_-^2 - D_2 \partial_{xx} (\delta F_2) (\delta F_2)_-
    & = \rho \nu_2 ((\overline{w})_- - (\underline{w})_-) (\delta F_2)_- - \delta_2 (\delta F_2)_-^2  \\
    & \leq \rho \nu_2 (\delta w)_+ (\delta F_2)_-  \leq \frac{\rho\nu_2}{2} ((\delta w)_+^2 + (\delta F_2)_-^2),
  \end{align*}
  where we used the inequality $(\overline{w})_- - (\underline{w})_- \leq (\underline{w}-\overline{w})_+$.

  Finally, integrating with respect to $x\in\RR$, using an integration by parts, and adding the inequalities, we obtain
  \begin{align*}
    & \frac{d}{dt} \int_\RR ((\delta w)_+^2 + (\delta F_1)_+^2 + (\delta F_2)_-^2)\,dx +  \int_\RR (D_1 (\partial_x (\delta F_1)_+)^2 + D_2 (\partial_x (\delta F_2)_-)^2)\,dx \\
    & \qquad \leq (b_1+b_2+\rho(\nu_1+\nu_2)) \int_\RR (\delta w)_+^2 \,dx + (b_1+\rho\nu_1) \int_\RR (\delta F_1)_+^2\,dx + \int_\RR (\delta F_2)_-^2\,dx.
  \end{align*}
  We conclude thanks to a Gronwall argument.

\section{Technical Lemmas}\label{app:lem}
\subsection{Proof of Lemma \ref{estim_simpl}}
  We recall that since $F_1^*$ is a stationary solution, we have the relation $\frac{\rho \nu_1 b_1}{\frac{b_1}{K_1} F_1^* +\mu_1+\nu_1} = \delta_1$.
  Then, for all $x>0$, by definition of $\overline{F_1}$, we have
  \begin{align*}
    - D_1 \overline{F_1}'' & = \delta_1 (F_1^* -\overline{F_1}) = \frac{\rho \nu_1 b_1 F_1^*}{\frac{b_1}{K_1} F_1^* +\mu_1+\nu_1} - \delta_1 \overline{F_1} \\
    & \geq \frac{\rho \nu_1 b_1 \overline{F_1}}{\frac{b_1}{K_1} \overline{F_1} +\mu_1+\nu_1} - \delta_1 \overline{F_1},
  \end{align*}
where we have used the fact that $\overline{F_1}\leq F_1^*$ by definition.

For the second inequality, we compute
$$
-D_2 \underline{F_2} '' = -\delta_2 \underline{F_2}.
$$
Since the positive part is non-negative, the inequality follows straightforwardly.
\qed

\subsection{Proof of Lemma \ref{lem:estim2}}

  We first notice that since by definition $\overline{F_1}$ is increasing with $\overline{F_1}(0) = 0$ and $\underline{F_2}$ is decreasing with $\underline{F_2}(+\infty) = 0$, there exists a unique $\tilde{L}$ such that $b_1 \overline{F_1}(\tilde{L}) = b_2 \underline{F_2}(\tilde{L})$. Thus, the function $\overline{F_2}$ defined in \eqref{def:F2bar} is well-defined, and by straightforward computations, we have
  $$
  -D_2 \overline{F_2}'' = \delta_2 F_2^* \mathbf{1}_{(0,L)}(x) - \delta_2 \overline{F_2}, \qquad \text{ on } \RR^+.
  $$
  Then, $\overline{F_2}$ is a $C^1$ function, and we verify easily from its expression \eqref{def:F2bar} that it is decreasing on $(0,+\infty)$, with $\overline{F_2}(0) = F_2^*$ and $\overline{F_2}(+\infty) = 0$.
  Moreover, since $\underline{F_2}$ is a sub-solution for the latter equation, we deduce that $\underline{F_2} \leq \overline{F_2}$ on $\RR^+$.
  We deduce in particular
  $$
  b_1 \overline{F_1}(\tilde{L}) = b_2 \underline{F_2}(\tilde{L}) \leq b_2 \overline{F_2}(\tilde{L}).
  $$
  Then, $\overline{F_1}$ being increasing and $\underline{F_2}$ decreasing with limit $0$ at infinity, there exists a unique $L_0 > \tilde{L}$ such that
  $$
  b_1 \overline{F_1}(L_0) =  b_2 \overline{F_2}(L_0).
  $$
  Denoting $\chi_0 := \overline{F_1}(L_0) \in (0,F_1^*)$, this latter equality reads also
  $$
  b_1 \chi_0 = b_2 \overline{F_2}(\overline{F_1}^{-1}(\chi_0)) = b_2  \overline{F_2}\left(-\sqrt{\frac{D_1}{\delta_1}} \ln \left(1-\frac{\chi_0}{F_1^*}\right)\right).
  $$
  Using again the fact that $\overline{F_2}$ is decreasing, we deduce that for all $y\leq \chi_0$, the term in the positive part in the expression \eqref{eq:Gamma} is non-positive. Thus, for $0\leq y\leq \chi_0$, we have
  $$
  \Gamma_1'(y) = -\delta_1 y. 
  $$
  For $y>\chi_0$, we have $-\sqrt{\frac{D_1}{\delta_1}} \ln \left(1-\frac{y}{F_1^*}\right)\geq L_0 > \tilde{L}$. Hence, from \eqref{def:F2bar}, we get by deriving \eqref{eq:Gamma} for $y>\chi_0$,
  $$
  \Gamma_1'(y) = 
    \frac{\rho \nu_1}{\frac{b_1}{K_1} y + \mu_1 + \nu_1} \left(b_1 y - b_2 F_2^* \cosh\left(\sqrt{\frac{\delta_2}{D_2}}\tilde{L}\right) \left(1-\frac{y}{F_1^*}\right)^{\zeta}\right) - \delta_1 y,
  $$
  with $\zeta = \sqrt{\dfrac{\delta_2 D_1}{\delta_1 D_2}}$.  
  Then, recalling the relation  $\rho \nu_1 b_1 = \delta_1(\frac{b_1}{K_1} F_1^* +\mu_1+\nu_1)$, we get, for $y>\chi_0$,
  \begin{align*}
    \Gamma_1'(y) =
    & \frac{1}{\frac{b_1}{K_1} y + \mu_1 + \nu_1} \left(\left(\rho \nu_1b_1  - \delta_1(\frac{b_1}{K_1} y + \mu_1 + \nu_1)\right)y - \rho \nu_1 b_2 F_2^* \cosh\left(\sqrt{\frac{\delta_2}{D_2}}\tilde{L}\right) \left(1-\frac{y}{F_1^*}\right)^{\zeta}\right)  \\
    = & \frac{1}{\frac{b_1}{K_1} y + \mu_1 + \nu_1} \left( \frac{\delta_1 b_1}{K_1} y (F_1^*-y) - \rho \nu_1 b_2 F_2^* \cosh\left(\sqrt{\frac{\delta_2}{D_2}}\tilde{L}\right) \left(1-\frac{y}{F_1^*}\right)^{\zeta} \right)   \\
    = & \frac{ (F_1^*-y)}{\frac{b_1}{K_1} y + \mu_1 + \nu_1} \left( \frac{\delta_1 b_1}{K_1} y  - \rho \nu_1 b_2 \frac{F_2^*}{(F_1^*)^\zeta} \cosh\left(\sqrt{\frac{\delta_2}{D_2}}\tilde{L}\right) (F_1^*- y)^{\zeta-1} \right).
  \end{align*}
  From this latter expression, we may deduce the following facts~:
  \begin{itemize}
  \item If $\zeta \geq 1$, we verify easily that the right hand side vanishes only once on $(\chi_0,F_1^*)$ in a point denoted $y_0$. Then, $\Gamma_1$ is non-increasing on $(0,y_0)$ and non-decreasing on $(y_0,F_1^*)$. Hence $\max_{[0,F_1^*]} \Gamma_1 = \Gamma_1(F_1^*)$.
    
  \item If $\zeta<1$, either there is no root or there are two zeros for $\Gamma_1'$ on $(0,F_1^*)$. If there is no root, then $\Gamma_1$ is decreasing on $(0,F_1^*)$ with $\Gamma_1(0)=0$; it is a contradiction with \eqref{hypGamma}. Hence, $\Gamma_1'$ admits two zeros on $(0,F_1^*)$, there exists $\chi_0<y_1<\overline{U}<F_1^*$ such that $\Gamma_1$ is non-increasing on $(0,y_1)$, non-decreasing on $(y_1,\overline{U})$, and non-increasing on $(\overline{U},F_1^*)$, then $\max_{[0,F_1^*]} \Gamma_1 = \Gamma_1(\overline{U}) > 0$.
  \end{itemize}
  
  We observe that if $U$ is a solution of \eqref{eq:subsys}, then the quantity $\frac 12 D_1 (U')^2 + \Gamma_1(U)$ is conserved. Indeed the right hand side of the equation in \eqref{eq:subsys} may be rewritten $\Gamma_1'(U)$. Then, let us introduce the Cauchy problem
  $$
  U' = \sqrt{\frac{2}{D_1} \left(\max_{[0,F_1^*]} \Gamma_1 - \Gamma_1(U) \right)}, \qquad U(0) = 0.
  $$
  From assumption \eqref{hypGamma}, we clearly have $\max_{[0,F_1^*]} \Gamma_1>0$, hence the latter Cauchy problem is well-defined. There exists a solution which is non-decreasing and non-negative by construction. Moreover, this solution is bounded by $F_1^*$ if $\zeta>1$ or by $\overline{U}$ if $\zeta<1$. Hence it has a limit when $y\to +\infty$, this limit should be a stationary solution of the latter ODE, the only possibility is $\arg\max_{[0,F_1^*]} \Gamma_1$.
  With the above discussion, we have $\arg\max_{[0,F_1^*]} \Gamma_1 = F_1^*$ if $\zeta\geq 1$, whereas $\arg\max_{[0,F_1^*]} \Gamma_1 = \overline{U}$ if $\zeta<1$.
\qed

\subsection{Proof of Lemma \ref{lem:supsol}}
  The limits at $0$ and at infinity are clear by construction.
  On the one hand, we have already observed that by construction
  \begin{align*}
    & - D_2 \underline{F_2}'' = - \delta_2 \underline{F_2},  \\
    & - D_2 \overline{F_2}'' = \delta_2 F_2^* \mathbf{1}_{(0,\tilde{L})}(x) - \delta_2 \overline{F_2}.
  \end{align*}
  Clearly, $\underline{F_2}$ is a sub-solution for the equation satisfied by $\overline{F_2}$; it implies  $\underline{F_2} \leq \overline{F_2}$.
  Moreover, recalling the identity $\delta_2 = \frac{\rho \nu_2 b_2}{\frac{b_2}{K_2} F_2^* + \mu_2 + \nu_2}$, we get
  \begin{align*}
  -D_2 \overline{F_2}'' & = \delta_2 (F_2^* \mathbf{1}_{(0,\tilde{L})} - \overline{F_2}) = \frac{\rho \nu_2}{\frac{b_2}{K_2}F_2^* + \mu_2 + \nu_2} b_2 F_2^* \mathbf{1}_{(0,\tilde{L})}- \delta_2 \overline{F_2}   \\
  & \geq \frac{\rho \nu_2}{\frac{b_2}{K_2} \overline{F_2} +\mu_2+\nu_2}  b_2\overline{F_2} \mathbf{1}_{(0,\tilde{L})}(x) - \delta_2 \overline{F_2}  \\
  & \geq \frac{\rho \nu_2}{\frac{b_2}{K_2} \overline{F_2} +\mu_2+\nu_2} (b_2\overline{F_2} -b_1 \underline{F_1})_+ - \delta_2 \overline{F_2}.
\end{align*}

On the other hand, we have
$$
- D_1 \overline{F_1}'' = \delta_1 F_1^* - \delta_1 \overline{F_1}.
$$
And, from \eqref{eq:subsys} and the expression of $\Gamma_1$ in \eqref{eq:Gamma}, we have
\begin{align}
  \label{eqF_11}
  - D_1 \underline{F_1}''
  & = \frac{\rho \nu_1}{\frac{b_1}{K_1} \underline{F_1} + \mu_1 + \nu_1}\left( b_1 \underline{F_1} - b_2 \overline{F_2}\left(-\sqrt{\frac{D_1}{\delta_1}} \ln \left(1-\frac{\underline{F_1}}{F_1^*}\right)\right)\right)_+ - \delta_1 \underline{F_1}    \\
  \nonumber
  & \leq \frac{\rho \nu_1 b_1 \underline{F_1}}{\frac{b_1}{K_1} \underline{F_1} + \mu_1 + \nu_1} - \delta_1 \underline{F_1} \leq \frac{\rho \nu_1 b_1 F_1^*}{\frac{b_1}{K_1} F_1^* + \mu_1 + \nu_1} - \delta_1 \underline{F_1} = \delta_1 F_1^* - \delta_1 \underline{F_1}.
\end{align}
Hence, we have $\underline{F_1}\leq \overline{F_1}$. It implies for all $y>0$,
$$
-\sqrt{\frac{D_1}{\delta_1}} \ln \left(1-\frac{\underline{F_1}(y)}{F_1^*}\right) \leq
-\sqrt{\frac{D_1}{\delta_1}} \ln \left(1-\frac{\overline{F_1}(y)}{F_1^*}\right) = y,
$$
where we use the definition of $\overline{F_1}$ given in Lemma \ref{estim_simpl} for the last identity.
Thus, injecting this latter inequality into \eqref{eqF_11} we have obtained, since $\overline{F_2}$ is non-increasing, for all $y>0$,
$$
  - D_1 \underline{F_1}''(y)
  \leq \frac{\rho \nu_1}{\frac{b_1}{K_1} \underline{F_1}(y) + \mu_1 + \nu_1}\left( b_1 \underline{F_1}(y) - b_2 \overline{F_2}(y)\right)_+ - \delta_1 \underline{F_1}(y).
$$

\section*{Acknowledgments}

The author acknowledges partial support from the STIC AmSud project BIO-CIVIP 23-STIC-02.

\end{document}